\documentclass[12pt]{amsart}

        \usepackage{latexsym}
        \usepackage{amssymb}
        \usepackage{amsmath}
        \usepackage{amsfonts}
        \usepackage{amsthm}
        \usepackage[hypertexnames=false]{hyperref}
        \ifpdf
          \usepackage[all,knot,poly,2cell]{xy}
        \else
          \usepackage[all,knot,poly,2cell,dvips]{xy}
        \fi
        \newdir{(}{{}*!/-5pt/@^{(}}
             \UseAllTwocells
        \usepackage{xspace}
        \usepackage{comment}
        \usepackage{fixltx2e}
        \usepackage[toc,page]{appendix}
        \usepackage{url}
        \usepackage{color}
        \usepackage{fixltx2e}
        \usepackage{enumitem}
        \setlist[enumerate]{font=\normalfont}
        \usepackage{mathtools} 

        \usepackage{eucal}
     \usepackage{mdframed}

\usepackage[left=1.5in, right=1.5in, top = 1.2 in, bottom = 1.2in]{geometry}

\newcommand{\spref}[1]{\href{http://stacks.math.columbia.edu/tag/#1}{#1}}

        \theoremstyle{plain}
        \newtheorem{theorem}{Theorem}[section]
        \newtheorem{corollary}[theorem]{Corollary}
        \newtheorem{lemma}[theorem]{Lemma}
        \newtheorem{proposition}[theorem]{Proposition}

\newmdtheoremenv{blackbox}{Black Box}

        \theoremstyle{definition}
        \newtheorem{definition}[theorem]{Definition}
        \newtheorem{example}[theorem]{Example}
        
                \newtheorem{question}[theorem]{Question}

        \theoremstyle{remark}
        \newtheorem{remark}[theorem]{Remark}

\newcommand{\GG}{{\mathbb G}}

\newcommand{\PP}{{\mathbb P}}
\newcommand{\ZZ}{{\mathbb Z}}
\renewcommand{\AA}{{\mathbb A}}

\newcommand{\oh}{\mathcal O}

\newcommand{\cI}{\mathcal{I}}
\newcommand{\cE}{\mathcal{E}}
\newcommand{\cF}{\mathcal{F}}

\newcommand{\cT}{\mathcal{T}}
\newcommand{\cX}{\mathcal{X}}
\newcommand{\cY}{\mathcal{Y}}
\newcommand{\cU}{\mathcal{U}}
\newcommand{\cV}{\mathcal{V}}
\newcommand{\cZ}{\mathcal{Z}}
\newcommand{\cW}{\mathcal{W}}
\newcommand{\cC}{\mathcal{C}}
\newcommand{\cM}{\mathcal{M}}

\DeclareMathOperator{\coker}{coker}
\DeclareMathOperator{\im}{im} \DeclareMathOperator{\Hom}{Hom}

\newcommand{\fM}{\mathfrak{M}}

\DeclareMathOperator{\Spec}{Spec}

\DeclareMathOperator{\Isom}{Isom}
\DeclareMathOperator{\Mor}{Mor}

\DeclareMathOperator{\id}{id}

\DeclareMathOperator{\Aut}{Aut}
\DeclareMathOperator{\GL}{GL}
\DeclareMathOperator{\Gr}{Gr}

\DeclareMathOperator{\Def}{Def}
\DeclareMathOperator{\Ob}{Ob}
\DeclareMathOperator{\red}{red}
\DeclareMathOperator{\Frame}{Frame}

\newcommand{\AffSch}{\mathsf{AffSch}}
\newcommand{\Sch}{\mathsf{Sch}}
\newcommand{\Rep}{\mathsf{Rep}}
\newcommand{\Vect}{\mathsf{Vect}}
\newcommand{\Sets}{\mathsf{Sets}}
\renewcommand{\ss}{\mathrm{ss}}
\DeclareMathOperator{\Br}{Br}

\renewcommand{\bar}{\overline}
\renewcommand{\tilde}{\widetilde}
\newcommand{\colim}{\varinjlim}
\newcommand{\ilim}{\varprojlim}
\newcommand{\fm}{\mathfrak{m}}
\newcommand{\fp}{\mathfrak{p}}
\newcommand{\tensor}{\otimes}
\renewcommand{\H}{\mathrm{H}}
\newcommand{\epf}{\qed \vspace{+10pt}}
\renewcommand{\hat}{\widehat}
\newcommand{\co}{\colon}
\newcommand{\iso}{\stackrel{\sim}{\to}}
\newcommand{\dlim}{\colim}

\newcommand{\AR}{\mathsf{AR}}
\newcommand{\inj}{\hookrightarrow} 
\newcommand{\FiniteA}{\mathsf{Finite} \, A\text{-}\mathsf{mod}}
\newcommand{\Coh}{\mathsf{Coh}} 
\newlength{\arrow}
\settowidth{\arrow}{$\to$}
\renewcommand{\o}{\mathrm{o}}

\newcommand{\gitq}{/\!\!/}
\newcommand*{\myrightarrow}[1]{\xrightarrow{\mathmakebox[\arrow]{#1}}}
\mathchardef\mhyphen="2D

\begin{document}

\title{Artin algebraization and quotient stacks}
\author[Alper]{Jarod Alper}
\address[Alper]{Mathematical Sciences Institute\\
Australia National University\\
Canberra, ACT 0200} \email{jarod.alper@math.anu.edu.au}

\begin{abstract}
This article contains a slightly expanded version of the lectures given by the author at the summer school  ``Algebraic stacks and related topics" in Mainz, Germany from August 31 to September 4, 2015.  The content of these lectures is purely expository and consists of two main goals.  First, we provide a treatment of Artin's approximation and algebraization theorems following the ideas of Conrad and de Jong which rely on a deep desingularization result due to N\'eron and Popescu.  Second, we prove that under suitable hypotheses, algebraic stacks are \'etale locally quotients stacks in a neighborhood of a point with a linearly reductive stabilizer.
\end{abstract}
\maketitle
\tableofcontents

\section*{Introduction}
The goal of these lectures is twofold:  
\begin{enumerate}
	\item Discuss Artin's approximation and algebraization theorems.  We will in fact prove that both theorems follow from a deep desingularization theorem due to N\'eron and Popescu.  This approach follows the ideas of Conrad and de Jong.  
	\item Prove that ``algebraic stacks with linearly reductive stabilizers at closed points are \'etale locally quotient stacks."  See Theorem \ref{T:general} for a precise statement.  This theorem was established by the author, Hall and Rydh in \cite{ahr}.
\end{enumerate}

These two goals are connected in the sense that Artin's theorems will be one of the key ingredients in establishing the main theorem expressed in the second goal.  In fact, the proof of Theorem \ref{T:general} will rely on an equivariant generalization of Artin's algebraization theorem.    Perhaps more importantly though, both these goals shed light on the local structure of algebraic stacks.  Artin's approximation and algebraization theorems together with Artin's criterion for algebraicity instruct us on how we should think of the local structure of algebraic stacks.  The main theorem in the second goal yields a more refined and equivariant understanding of the local structure of algebraic stacks in the case that the stabilizers of the closed points have linearly reductive stabilizers, a property that is often satisfied for algebraic stacks appearing in moduli theory. 

\bigskip

\noindent {\bf Acknowledgements.}  We thank Ariyan Javanpeykar and Ronan Terpereau for organizing the summer school  ``Algebraic stacks and related topics" and we thank the attentive audience of this summer school for providing worthwhile feedback.  In particular, we thank Pieter Belmans for providing a number of corrections of the first draft of these notes.  Finally, we thank Jack Hall and David Rydh for extensive and useful suggestions on the content of these lectures.

\newpage 

\section*{Lecture 1: Artin approximation}
\refstepcounter{section}

The goal of this first lecture is to discuss Artin approximation and provide a few applications.  While we will not give a complete and self-contained proof of Artin approximation, we will show how it follows from N\'eron--Popescu desingularization, which is a very deep and difficult result.  

Throughout these lectures, $k$ will denote an algebraically closed field.  Although almost every statement we make can be generalized to an arbitrary base scheme $S$ with the arguments being essentially identical, it is nevertheless my belief that working over a fixed algebraically closed field $k$ makes the geometric content of the statements more transparent.  Once the geometric content is digested over the field $k$, the motivated reader will have no problem stating and proving the analogous statements over an arbitrary base scheme.

\subsection{N\'eron--Popescu desingularization}

\begin{definition}
A ring homomorphism $A \to B$ of noetherian rings is called {\it geometrically regular} if $A \to B$ is flat and for every prime ideal $\fp \subset A$ and every finite field extension $k(\fp) \to k'$ (where $k(\fp) = A_{\fp} / \fp$), the fiber $B \tensor_A k'$  is regular.
\end{definition}

\begin{remark} It is important to note that $A \to B$ is {\it not} assumed to be of finite type.  In the case that $A \to B$ is a ring homomorphism (of noetherian rings) of finite type, then $A \to B$ is geometrically regular if and only if $A \to B$ is smooth (i.e. $\Spec B \to \Spec A$ is smooth).  
\end{remark}

\begin{remark}  It can be shown that it is equivalent to require the fibers $B \tensor_A k'$ to be regular only for {\it inseparable} field extensions $k(\fp) \to k'$.  In particular, in characteristic $0$, $A \to B$ is geometrically regular if it is flat and for every prime ideal $\fp \subset A$, the fiber $B \tensor_A k(\fp)$ is regular.
\end{remark}

We will accept the following result as a black box.  The proof is difficult.

\medskip

\begin{blackbox}[N\'eron--Popescu desingularization] \label{bb1}
Let $A \to B$ be a ring homomorphism of noetherian rings.  Then $A \to B$ is geometrically regular if and only if $B = \colim B_{\lambda}$ is a direct limit of smooth $A$-algebras.
\end{blackbox}

\begin{remark}
This was result was proved by N\'eron in \cite{neron} in the case that $A$ and $B$ are DVRs and in general by Popescu in \cite{popescu1}, \cite{popescu2}, \cite{popescu3}.  We recommend \cite{swan} and \cite[Tag~\spref{07GC}]{stacks-project} for an exposition on this result.
\end{remark}

\begin{example}
If $l$ is a field and $l^{s}$ denotes its separable closure, then $l \to l^s$ is geometrically regular.  Clearly, $l^s$ is the direct limit of separable field extensions $l \to l'$ (i.e. \'etale and thus smooth $l$-algebras).  If $l$ is a perfect field, then any field extension $l \to l'$ is geometrically regular---but if $l \to l'$ is not algebraic, it is not possible to write $l'$ is a direct limit of \'etale $l$-algebras.
On the other hand, if $l$ is a non-perfect field, then $l \to \bar{l}$ is not geometrically regular as the geometric fiber is non-reduced and thus not regular.
\end{example}

In order to apply N\'eron--Popescu desingularization, we will need the following result, which we will also accept as a black box.  The proof is substantially easier than N\'eron--Popescu's result but nevertheless requires some effort.

\medskip

\begin{blackbox} \label{bb2}
If $S$ is a scheme of finite type over $k$ and $s \in S$ is a $k$-point, then $\oh_{S,s} \to \hat{\oh}_{S,s}$ is geometrically regular.
\end{blackbox}

\begin{remark}  See \cite[IV.7.4.4]{ega} or  \cite[Tag~\spref{07PX}]{stacks-project} for a proof.
\end{remark}

\begin{remark} \label{R:Gring}
A local ring $A$ is called a {\it $G$-ring} if the homomorphism $A \to \hat{A}$ is geometrically regular.  We remark that one of the conditions for a scheme $S$ to be {\it excellent} is that every local ring is a  $G$-ring.  Any scheme that is finite type over a field or $\ZZ$ is excellent.
\end{remark}

\subsection{Artin approximation}
Let $S$ be a scheme and consider a contravariant functor 
$$F \co \Sch/S \to \Sets$$
 where $\Sch/S$ denotes the category of schemes over $S$.  An important example of a contravariant functor is the functor representing a scheme:  if  $X$ is a scheme over $S$, then the {\it functor representing $X$} is:
 \begin{equation} \label{E:representable-functor}
 h_X \co \Sch/S \to \Sets , \qquad (T \to S) \mapsto \Hom_S(T, X).
 \end{equation} 
 We say that $F$ is {\it limit preserving} if for every direct limit $\dlim B_{\lambda}$ of $\oh_S$-algebras $B_{\lambda}$ (i.e. a direct limit of commutative rings $B_{\lambda}$ together with morphisms $\Spec B_{\lambda} \to S$), the natural map
$$\dlim F(\Spec B_{\lambda}) \to F(\Spec \dlim B_{\lambda})$$
is bijective.  This should be viewed as a finiteness condition on the functor $F$.  For instance, the functor $h_X$ from \eqref{E:representable-functor} is limit preserving if and only if $X \to S$ is locally of finite presentation (equivalently locally of finite type if $S$ is noetherian).

Recall that Yoneda's lemma asserts that for an $S$-scheme $T$, there is a natural bijection between $F(T)$ and the set $\Hom(T,F)$ of natural transformation of functors (where we are abusing notation for writing $T$ rather than its representable functor $h_T$).   Moreover, we will consistently abuse notation by conflating objects  $\xi \in F(T)$ and morphisms (i.e. natural transformations of functors) $\xi \co T \to F$.  

\begin{theorem}[Artin approximation] \label{T:approx}
Let $S$ be a scheme of finite type over $k$ and let
$$F \co \Sch/S \to \Sets$$
be a limit preserving contravariant functor.  Let $s \in S$ be a $k$-point and $\hat{\xi} \in F(\Spec \hat{\oh}_{S,s})$.  For any integer $N \ge 0$, there exist an \'etale morphism $(S', s') \to (S, s)$ and an element $\xi' \in F(S')$ such that the restrictions of $\hat{\xi}$ and $\xi'$ to $\Spec(\oh_{S,s} / \fm_s^{N+1})$ are equal (under the identification $\oh_{S,s} / \fm_s^{N+1} \cong \oh_{S',s'} / \fm_{s'}^{N+1}$).
\end{theorem}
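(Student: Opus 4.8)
The plan is to feed the formal object $\hat\xi$ through the two black boxes and then use only elementary facts about smooth and \'etale morphisms to ``algebraize'' the outcome. First I would apply Black Box~\ref{bb2} to get that $\oh_{S,s} \to \hat{\oh}_{S,s}$ is geometrically regular, and then Black Box~\ref{bb1} to write $\hat{\oh}_{S,s} = \colim_\lambda B_\lambda$ as a filtered direct limit of smooth $\oh_{S,s}$-algebras. Since $F$ is limit preserving, the map $\colim_\lambda F(\Spec B_\lambda) \to F(\Spec \hat{\oh}_{S,s})$ is bijective, so $\hat\xi$ is the image of some $\xi_\lambda \in F(\Spec B_\lambda)$; writing $\phi \co \Spec\hat{\oh}_{S,s} \to \Spec B_\lambda$ for the morphism attached to $B_\lambda \to \hat{\oh}_{S,s}$, we have $\hat\xi = \phi^{*}\xi_\lambda$. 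The problem is thereby reduced to the geometric statement that \emph{there exist an \'etale neighborhood $(S', s') \to (S, s)$ and an $S$-morphism $g \co S' \to \Spec B_\lambda$ agreeing with $\phi$ after restriction to $\Spec(\oh_{S,s}/\fm_s^{N+1})$}: indeed, $\xi' := g^{*}\xi_\lambda$ then works, because restriction to $\Spec(\oh_{S,s}/\fm_s^{N+1})$ commutes with pullback and $\phi$, $g$ were arranged to agree there.

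To prove the geometric statement I would work over $\oh_{S,s}$ and descend afterwards. Let $x \in \Spec B_\lambda$ be the image of the closed point of $\Spec\hat{\oh}_{S,s}$; since $B_\lambda$ is of finite type over $\oh_{S,s}$, the point $x$ lies over $s$ and has residue field $k$. As $\Spec B_\lambda \to \Spec\oh_{S,s}$ is smooth at $x$, of some relative dimension $d$, the structure theory of smooth morphisms furnishes an open affine $x \in W \subseteq \Spec B_\lambda$ together with an \'etale $\oh_{S,s}$-morphism $h \co W \to \A^d_{\oh_{S,s}}$. The truncation $\bar\sigma \co \Spec(\oh_{S,s}/\fm_s^{N+1}) \to \Spec B_\lambda$ of $\phi$ factors through $W$ (its image is the single point $x$), and $h \circ \bar\sigma$ is given by a $d$-tuple $(\bar a_1, \dots, \bar a_d)$ with $\bar a_i \in \oh_{S,s}/\fm_s^{N+1}$; choosing arbitrary lifts $a_i \in \oh_{S,s}$ yields a section $\tau \co \Spec\oh_{S,s} \to \A^d_{\oh_{S,s}}$ congruent to $h \circ \bar\sigma$ modulo $\fm_s^{N+1}$. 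Set $S'' := W \times_{\A^d_{\oh_{S,s}},\, h,\, \tau} \Spec\oh_{S,s}$. Then $S'' \to \Spec\oh_{S,s}$ is \'etale (a base change of $h$); the congruence just established produces a section $\Spec(\oh_{S,s}/\fm_s^{N+1}) \to S''$; and the image $s''$ of its closed point is a $k$-point over $s$, so $(S'', s'') \to (\Spec\oh_{S,s}, s)$ is an \'etale neighborhood. Finally the composite $g'' \co S'' \to W \hookrightarrow \Spec B_\lambda$ agrees with $\phi$ on $\Spec(\oh_{S,s}/\fm_s^{N+1})$ by construction.

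It remains to descend from $\Spec\oh_{S,s}$ to $S$. Writing $\oh_{S,s} = \colim_{U \ni s}\oh(U)$ over the affine open neighborhoods of $s$, and noting $S''$ is affine and of finite presentation over $\Spec\oh_{S,s}$, standard limit arguments spread $S''$ out to an \'etale morphism $S' \to U_0$ over some open $U_0 \ni s$, and---using limit-preservation of $F$ once more---spread $\xi'' := (g'')^{*}\xi_\lambda \in F(S'')$ out to a class $\xi' \in F(S')$. Then $S' \to S$ is \'etale, and since an \'etale morphism with trivial residue field extension induces an isomorphism on completions, hence on $\fm_s^{N+1}$-truncations, the desired equality of restrictions is inherited from the previous step.

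The one place I expect genuine content to enter---everything else being the two black boxes or formal manipulation---is the passage from the formal solution to an \'etale-local one: using the structure of smooth morphisms to ``straighten'' $\Spec B_\lambda$ near $x$ into an \'etale cover of affine space, which converts lifting the truncated section into the trivial problem of lifting $d$ elements of $\oh_{S,s}/\fm_s^{N+1}$ to $\oh_{S,s}$. The remaining step---translating between $\oh_{S,s}$ and honest \'etale neighborhoods of $s$ in $S$---is routine but must be handled with a little care.
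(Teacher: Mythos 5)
Your proposal is correct and follows essentially the same route as the paper: both proofs feed the two black boxes into the limit-preserving hypothesis to descend $\hat\xi$ to some smooth finite-type $\oh_{S,s}$-algebra $B_\lambda$, then use the local structure of smooth morphisms to realize $\Spec B_\lambda$ (after shrinking) as an \'etale cover of $\A^d_{\oh_{S,s}}$, lift the $N$-th order truncation of the formal section to a genuine section $\Spec\oh_{S,s}\to\A^d_{\oh_{S,s}}$, pull back to get an \'etale neighborhood, and finally spread out from $\Spec\oh_{S,s}$ to $S$. The only cosmetic difference is that the paper phrases the \'etale factorization through a choice of basis of $\Omega_{B/\oh_{S,s}}$ while you invoke the structure theory of smooth morphisms directly; these are the same construction.
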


\begin{remark}  This was proven in \cite[Cor.~2.2]{artin-approx} in the more general case that $S$ is of finite type over a field or an excellent dedekind domain.  In fact, this theorem holds when the base scheme $S$ is excellent and our proof below works in this generality with only minor modifications.
\end{remark}

\begin{remark}  It is not possible in general to find $\xi' \in F(S')$ restricting to $\hat{\xi}$ or even such that the restrictions of $\xi'$ and $\hat{\xi}$ to $\Spec \oh_{S,s} / \fm_s^{n+1}$ agree for all $n \ge 0$.  For instance, $F$ could be the functor $h_{\AA^1}$ representing the affine line $\AA^1$ and $\hat{\xi} \in \hat{\oh}_{S,s}$ could be a non-algebraic power series.
\end{remark}

\subsection{Alternative formulations of Artin approximation}  

Consider the case that $S = \Spec A$ is an affine scheme of finite type over $k$ and $h_X \co \Sch/S \to \Sets$ is the functor representing the affine scheme $X = \Spec A[x_1, \ldots, x_n]/$ $(f_1, \ldots, f_m)$ of finite type over $S$.  Restricted to the category of affine schemes over $S$ (or equivalently $A$-algebras), the functor is:
$$\begin{aligned}
h_X \co \AffSch / S & \to \Sets \\
	\Spec R & \mapsto \{ a = (a_1, \ldots, a_n) \in R^{\oplus n} \, \mid \, f_i(a) = 0 \text { for all $i$}\}
\end{aligned}
$$
Applying Artin approximation to this functor, we obtain:
\begin{corollary}  \label{C:artin-approx-reformulation}
Let $A$ be a finitely generated $k$-algebra and $\fm \subset A$ be a maximal ideal.  Let $f_1, \ldots, f_m \in A[x_1, \ldots, x_n]$ be polynomials.  Let $\hat{a} = (\hat{a}_1, \ldots, \hat{a}_n) \in \hat{A}_{\fm}$ be a solution to the equations $f_1(x)= \cdots = f_m(x)=0$.  Then for every $N \ge 0$, there exist an \'etale ring homomorphism $A \to A'$, a maximal ideal $\fm' \subset A'$ over $\fm$, and a solution $a' = (a'_1, \ldots, a'_n) \in A'^{\oplus n}$  to the equations $f_1(x)= \cdots = f_m(x)=0$ such that $a' \cong \hat{a} \mod \fm^{N+1}$. \epf
\end{corollary}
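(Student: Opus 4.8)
The plan is to deduce Corollary~\ref{C:artin-approx-reformulation} directly from Theorem~\ref{T:approx} (Artin approximation) by exhibiting the solution set of a polynomial system as a limit preserving functor. Concretely, set $S = \Spec A$, which is of finite type over $k$, and let $X = \Spec A[x_1,\dots,x_n]/(f_1,\dots,f_m)$, a scheme of finite type over $S$. Since $X \to S$ is of finite presentation, the representable functor $h_X \co \Sch/S \to \Sets$ is limit preserving, as noted in the excerpt. The maximal ideal $\fm \subset A$ corresponds to a $k$-point $s \in S$ (here one uses that $A$ is a finitely generated algebra over the algebraically closed field $k$, so the residue field at a maximal ideal is $k$ by the Nullstellensatz), and $\hat{\oh}_{S,s} = \hat{A}_{\fm}$. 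A tuple $\hat{a} = (\hat{a}_1,\dots,\hat{a}_n) \in \hat{A}_{\fm}^{\oplus n}$ satisfying $f_i(\hat{a}) = 0$ for all $i$ is, by the description of $h_X$ on affine schemes given just above the corollary, precisely an element $\hat{\xi} \in h_X(\Spec \hat{\oh}_{S,s})$.

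Next I would invoke Theorem~\ref{T:approx} with the functor $F = h_X$, the point $s$, the element $\hat{\xi}$, and the given integer $N$. This produces an \'etale morphism $(S',s') \to (S,s)$ and an element $\xi' \in h_X(S')$ whose restriction to $\Spec(\oh_{S,s}/\fm_s^{N+1})$ agrees with that of $\hat{\xi}$. The remaining task is bookkeeping: shrink $S'$ so that it is affine, say $S' = \Spec A'$, so that $A \to A'$ is an \'etale ring homomorphism; the point $s'$ then corresponds to a maximal ideal $\fm' \subset A'$ lying over $\fm$. The element $\xi' \in h_X(\Spec A') = h_X(S')$ is, again by the explicit description of $h_X$, a tuple $a' = (a_1',\dots,a_n') \in A'^{\oplus n}$ with $f_i(a') = 0$ for all $i$. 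Finally, the identification $\oh_{S,s}/\fm_s^{N+1} \cong \oh_{S',s'}/\fm_{s'}^{N+1}$ from the theorem translates the equality of the restrictions of $\hat{\xi}$ and $\xi'$ into the congruence $a' \equiv \hat{a} \bmod \fm^{N+1}$, which is the assertion of the corollary.

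There is no serious obstacle here: the corollary is a more or less immediate translation of Theorem~\ref{T:approx} into coordinates, and the entire content is packaged in the observation that $h_X$ is limit preserving (because $X \to S$ is of finite presentation) and that $k$-points of $\Spec A$ are exactly the maximal ideals. The only points requiring a modicum of care are the reduction to the affine (and affine \'etale) case for $S'$ — which is harmless since \'etale morphisms are locally of finite presentation and one may replace $S'$ by an affine open neighborhood of $s'$ — and the compatibility of the various identifications of Artin local rings modulo $\fm^{N+1}$, which is exactly what the parenthetical remark in Theorem~\ref{T:approx} records. Accordingly I would present this as a short proof consisting of the functor identification, the application of the theorem, and the unwinding of notation.
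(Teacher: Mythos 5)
Your proof is correct and is essentially identical to the paper's own argument: the paper introduces the functor $h_X$ for $X = \Spec A[x_1,\dots,x_n]/(f_1,\dots,f_m)$, observes it is limit preserving, and states the corollary as an immediate consequence of applying Theorem~\ref{T:approx}. Your added attention to the Nullstellensatz (so that $\fm$ corresponds to a $k$-point) and to shrinking $S'$ to an affine neighborhood are exactly the routine bookkeeping the paper implicitly leaves to the reader.
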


\begin{remark}
Although this corollary may seem weaker than Artin approximation, it is not hard to see that it in fact directly implies Artin approximation.   Indeed, writing $S= \Spec A$, we may write $\hat{\oh}_{S,s}$ as a direct limit of finite type $A$-algebras and since $F$ is limit preserving, we can find a commutative diagram
$$\xymatrix{
\Spec \hat{\oh}_{S,s} \ar[d] \ar[rd]^{\hat{\xi}}	& \\
\Spec A[x_1, \ldots, x_n]/(f_1, \ldots, f_m) \ar[r]_{\qquad \qquad \qquad \xi}		& F.
}$$
The vertical morphism corresponds to a solution $\hat{a} = (\hat{a}_1, \ldots, \hat{a}_n) \in \hat{\oh}_{S,s}^{\oplus n}$ to the equations $f_1(x)= \cdots =f_m(x)=0$.  Applying Corollary \ref{C:artin-approx-reformulation} yields the desired \'etale morphism $(\Spec A', s') \to (\Spec A,s)$ and a solution $a'=(a'_1, \ldots, a'_n) \in A'^{\oplus n}$ to the equations $f_1(x)= \cdots =f_m(x)=0$ agreeing with $\hat{a}$ up to order $N$ (i.e. congruent modulo $\fm^{N+1}$).  This induces a morphism
$$\xi' \co \Spec A' \to \Spec A[x_1, \ldots, x_n]/(f_1, \ldots, f_m) \to F$$
which agrees with $\hat{\xi} \co \Spec \hat{\oh}_{S,s} \to F$ to order $N$.
\end{remark}

Alternatively, we can state Corollary \ref{C:artin-approx-reformulation} using henselian rings.
Recall that a local ring $(A, \fm)$ is called {\it henselian} if the following analogue of the implicit function theorem holds:  if $f_1, \ldots, f_n \in A[x_1, \ldots, x_n]$ and $\bar{a} = (\bar{a}_1, \ldots, \bar{a}_n) \in (A/\fm)^{\oplus n}$ is a solution to the equations $f_1(x) = \cdots = f_n(x)=0$ modulo $\fm$ and $\det \big(\frac{\partial f_i}{\partial x_j} (\bar{a})\big)_{i,j=1, \ldots, n} \neq 0$, then there exists a solution $a = (a_1, \ldots, a_n) \in A^{\oplus n}$ to the equations $f_1(x) = \cdots = f_n(x)=0$.   Equivalently, if $(A,\fm)$ is a local $k$-algebra with $A/\fm \cong k$, then $(A, \fm)$ is henselian if every \'etale homomorphism 
$(A, \fm) \to (A', \fm')$ of local rings with $A/\fm \cong A'/\fm'$ is an isomorphism.
Also, if $S$ is a scheme and $s \in S$ is a point, one defines the {\it henselization $\oh_{S,s}^h$ of $S$ at $s$} to be
$$\oh_{S,s}^{h} = \dlim_{(S',s') \to (S,s) } \Gamma(S', \oh_{S'})$$
where the  direct limit is over all \'etale morphisms $(S',s') \to (S,s)$.   In other words, $\oh_{S,s}^h$ is the local ring of $S$ at $s$ in the \'etale topology.

\begin{corollary} \label{C:henselian}
Let $(A, \fm)$ be a local henselian ring which is the henselization of a scheme of finite type over $k$ at a $k$-point.\footnote{More generally, this statement is true for any local henselian $G$-ring (see Remark \ref{R:Gring}) and, in particular, any local henselian excellent ring.}  Let $f_1, \ldots, f_m \in A[x_1, \ldots, x_n]$.  Suppose that $\hat{a} = (\hat{a}_1, \ldots, \hat{a}_n) \in \hat{A}^{\oplus n}$ is a solution to the equations $f_1(x) = \cdots = f_m(x)=0$.  For any integer $N \ge 0$, there exists a solution $a = (a_1, \ldots, a_n) \in A^{\oplus n}$  to the equations $f_1(x) = \cdots = f_m(x)=0$ such that $\hat{a} \cong a \mod \fm^{N+1}$.
\end{corollary}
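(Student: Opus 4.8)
The plan is to reduce the statement to Corollary~\ref{C:artin-approx-reformulation} by spreading everything out to a finite stage of the henselization. By hypothesis $(A,\fm)$ is the henselization $\oh_{S,s}^{h}$ of a scheme $S$ of finite type over $k$ at a $k$-point $s$, and after shrinking $S$ we may assume $S = \Spec A_0$ is affine. By the definition of the henselization, $A = \dlim_{\lambda} A_{\lambda}$, where $(\Spec A_{\lambda}, \fm_{\lambda}) \to (S,s)$ ranges over the filtered system of affine \'etale neighborhoods of $s$: each $A_{\lambda}$ is a finitely generated $k$-algebra, each $\fm_{\lambda}$ is a maximal ideal with residue field $k$ (\'etale maps induce finite separable residue extensions and $k = \bar{k}$), and each structure map $A_{\lambda} \to A$ is a local homomorphism of $k$-algebras. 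Two standard facts will be used: \'etale localization and henselization do not affect the completion, so $\hat{\oh}_{S,s} \cong \hat{(A_{\lambda})}_{\fm_{\lambda}} \cong \hat{A}$ compatibly; and they do not affect infinitesimal neighborhoods, so the natural maps $A_{\lambda}/\fm_{\lambda}^{N+1} \to A/\fm^{N+1} \to \hat{A}/\fm^{N+1}$ are isomorphisms.

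Now start from the given $f_1, \ldots, f_m \in A[x_1, \ldots, x_n]$. Since there are finitely many of them, each with finitely many coefficients, and $A$ is the filtered colimit of the $A_{\lambda}$, there is an index $\lambda$ with $f_1, \ldots, f_m \in A_{\lambda}[x_1, \ldots, x_n]$. Via $\hat{A} \cong \hat{(A_{\lambda})}_{\fm_{\lambda}}$ the element $\hat{a} = (\hat{a}_1, \ldots, \hat{a}_n)$ becomes a solution of $f_1 = \cdots = f_m = 0$ over the completion of the finitely generated $k$-algebra $A_{\lambda}$ at its maximal ideal $\fm_{\lambda}$. Applying Corollary~\ref{C:artin-approx-reformulation} to $A_{\lambda}$, $\fm_{\lambda}$, these polynomials, $\hat{a}$ and the given $N$, we obtain an \'etale ring homomorphism $A_{\lambda} \to A'$, a maximal ideal $\fm' \subset A'$ over $\fm_{\lambda}$ (again with residue field $k$), and a solution $a' = (a'_1, \ldots, a'_n) \in A'^{\oplus n}$ of $f_1 = \cdots = f_m = 0$ with $a' \equiv \hat{a} \bmod \fm_{\lambda}^{N+1}$ under $A'/\fm'^{N+1} \cong A_{\lambda}/\fm_{\lambda}^{N+1}$.

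To conclude, observe that $(\Spec A', \fm') \to (\Spec A_{\lambda}, \fm_{\lambda}) \to (S,s)$ is an affine \'etale neighborhood of $s$, hence contributes a term to the colimit defining $\oh_{S,s}^{h}$; this yields a canonical $A_{\lambda}$-algebra homomorphism $A' \to A$. Let $a = (a_1, \ldots, a_n) \in A^{\oplus n}$ be the image of $a'$. Since the $f_i$ have coefficients in $A_{\lambda}$ and $A' \to A$ is an $A_{\lambda}$-algebra map, $a$ is a solution of $f_1 = \cdots = f_m = 0$ over $A$. Finally, transporting the congruence $a' \equiv \hat{a} \bmod \fm_{\lambda}^{N+1}$ through the compatible isomorphisms $A'/\fm'^{N+1} \cong A_{\lambda}/\fm_{\lambda}^{N+1} \cong A/\fm^{N+1} \cong \hat{A}/\fm^{N+1}$ gives $a \equiv \hat{a} \bmod \fm^{N+1}$, which is the assertion.

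There is no real obstacle here: the argument is bookkeeping, and its only external inputs are the two insensitivity properties of \'etale localization and completion recalled above (both standard; see, e.g., \cite{stacks-project}) together with the finite-presentation descent of the $f_i$ to a finite stage $A_{\lambda}$. The generalization mentioned in the footnote, for an arbitrary local henselian $G$-ring $(A,\fm)$, is obtained in the same spirit but without descending to finite type: one applies Artin approximation (Theorem~\ref{T:approx}) directly to $\Spec A$ and to the functor $h_X$ represented by $X = \Spec A[x_1,\ldots,x_n]/(f_1,\ldots,f_m)$, using the validity of Artin approximation over henselian $G$-rings together with the fact that a local henselian ring admits no nontrivial pointed \'etale neighborhood, so that the approximating section is automatically defined over $A$ itself.
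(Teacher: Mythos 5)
The paper states Corollary~\ref{C:henselian} without giving a proof, presenting it as an immediate reformulation of Corollary~\ref{C:artin-approx-reformulation} using the colimit description $\oh_{S,s}^h = \dlim_{(S',s')\to(S,s)} \Gamma(S',\oh_{S'})$ recalled just before the statement. Your proof is correct and is precisely that reformulation carried out in detail: descend the finitely many coefficients of $f_1,\ldots,f_m$ to a finite stage $A_\lambda$ of the colimit, invoke Corollary~\ref{C:artin-approx-reformulation} there, and push the approximating solution forward along the canonical map $A'\to A$ arising because $(\Spec A',\fm')$ is another \'etale neighborhood of $s$. The insensitivity of completions and of finite-order infinitesimal neighborhoods under \'etale localization is exactly what makes the bookkeeping work, and you have invoked it correctly. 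Your closing remark on the $G$-ring generalization is also right and matches the paper's Remark after Theorem~\ref{T:approx} that the argument extends to excellent (more generally $G$-ring) bases, combined with the characterization of henselian local rings recalled in the text (every local \'etale map with trivial residue extension is an isomorphism), which lets one contract the resulting \'etale neighborhood back to $\Spec A$.
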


\subsection{A first application of Artin approximation}
The next corollary states an important fact which you may have taken for granted: if two schemes are formally isomorphic, then they are isomorphic in the \'etale topology.  First, we recall that if $X \to Y$ is a morphism of schemes of finite type over $k$ and $x \in X$ is a $k$-point, then $X \to Y$ is \'etale at $x$ if and only if the induced homomorphism $\hat{\oh}_{Y,f(x)} \to \hat{\oh}_{X,x}$ of completions is an isomorphism.

\begin{corollary}  \label{C:etale-local} Let $X_1, X_2$ be schemes of finite type over $k$.  Suppose $x_1 \in X_1, x_2 \in X_2$ are $k$-points such that $\hat{\oh}_{X_1,x_1} \cong \hat{\oh}_{X_2, x_2}$.  Then there exists a common \'etale neighborhood 
\begin{equation} \label{E:common-etale}
\begin{split}
\xymatrix{
		& (X',x') \ar[rd] \ar[ld] \\
(X_1, x_1)		& 	& (X_2, x_2) \, .
}
\end{split}
\end{equation}
\end{corollary}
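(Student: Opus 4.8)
The plan is to render the formal isomorphism concrete as a finite system of polynomial equations together with a formal solution, to approximate that solution using Artin approximation in the form of Corollary~\ref{C:artin-approx-reformulation}, and then to verify that the resulting honest morphism is \'etale at the marked point by the \'etale-completion criterion recalled just before the statement.

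First I would reduce to the affine case: completions are unchanged by passing to open neighborhoods, and a common \'etale neighborhood of affine opens around $x_1$ and $x_2$ is also one of $X_1$ and $X_2$, so we may take $X_i = \Spec A_i$ with $A_i$ finitely generated over $k$ and $\fm_i \subset A_i$ the maximal ideal at $x_i$. Fixing a presentation $A_2 = k[y_1, \dots, y_n]/(g_1, \dots, g_m)$, a morphism $\Spec R \to X_2$ for a $k$-algebra $R$ is the same as a tuple $b \in R^{\oplus n}$ with $g_1(b) = \dots = g_m(b) = 0$. If $\varphi \co \hat{\oh}_{X_2,x_2} \iso \hat{\oh}_{X_1,x_1}$ is the given isomorphism, then $A_2 \to \hat{\oh}_{X_2,x_2} \xrightarrow{\varphi} \hat{\oh}_{X_1,x_1}$ yields a formal solution $\hat{b} = (\hat{b}_1, \dots, \hat{b}_n) \in \hat{\oh}_{X_1,x_1}^{\oplus n}$ of $g_1 = \dots = g_m = 0$ whose reduction modulo $\fm_1$ records the coordinates of $x_2$. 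Now apply Corollary~\ref{C:artin-approx-reformulation} with $N = 1$ to the $g_j \in A_1[y_1, \dots, y_n]$ and $\hat{b}$: there are an \'etale homomorphism $A_1 \to A_1'$, a maximal ideal $\fm_1' \subset A_1'$ over $\fm_1$, and an honest solution $b' \in (A_1')^{\oplus n}$ of $g_1 = \dots = g_m = 0$ with $b' \equiv \hat{b} \bmod \fm_1^{2}$. Since $k$ is algebraically closed and $A_1 \to A_1'$ is \'etale, the residue field at $\fm_1'$ is again $k$, so $(X_1', x_1') \to (X_1, x_1)$ is an \'etale neighborhood (with $X_1' = \Spec A_1'$ and $x_1'$ the point $\fm_1'$), and $b'$ defines a morphism $f \co X_1' \to X_2$.

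It then remains to show $f$ is \'etale at $x_1'$, for then restricting $f$ to an affine open $X'$ around $x_1'$ on which it is \'etale, and composing the open immersion with $A_1 \to A_1'$ on the other side, produces diagram~\eqref{E:common-etale}. Because $b' \equiv \hat{b} \bmod \fm_1$, the coordinates of $f(x_1')$ equal those of $x_2$, so $f(x_1') = x_2$ (both are $k$-points). Identifying $\hat{\oh}_{X_1',x_1'}$ with $\hat{\oh}_{X_1,x_1}$ via the \'etale map $A_1 \to A_1'$, the induced homomorphism $\psi \co \hat{\oh}_{X_2,x_2} \to \hat{\oh}_{X_1',x_1'} \cong \hat{\oh}_{X_1,x_1}$ is determined by the images $b_i'$ of the $y_i$ and hence, by $b' \equiv \hat{b} \bmod \fm_1^2$, agrees with $\varphi$ modulo $\fm_1^2$; in particular $\psi$ induces the same, therefore bijective, map $\fm_2/\fm_2^2 \to \fm_1/\fm_1^2$ on cotangent spaces as $\varphi$ does, so $\psi$ is surjective by completeness and Nakayama. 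Composing $\psi$ with the isomorphisms identifying its source and target with $\hat{\oh}_{X_2,x_2}$ then gives a surjective endomorphism of that Noetherian ring, hence an isomorphism; so $\psi$ is an isomorphism and $f$ is \'etale at $x_1'$.

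The only non-formal point is the last one: Artin approximation yields a solution agreeing with $\hat{b}$ only to first order, not a genuine lift, so one cannot directly assert that the completion map is $\varphi$. The fix is the two-stage argument above---first-order agreement forces surjectivity of $\psi$ through Nakayama, and the Hopfian property of Noetherian rings upgrades surjectivity to bijectivity. The same reasoning works verbatim for henselian $G$-rings, and one could equally phrase the approximation step through Corollary~\ref{C:henselian}.
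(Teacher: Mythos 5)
Your proof is correct and essentially the same as the paper's. The paper applies the functorial form of Artin approximation (Theorem~\ref{T:approx}) with $N=1$ to the limit-preserving functor $T \mapsto \Hom(T, X_2)$, while you apply the equivalent polynomial reformulation Corollary~\ref{C:artin-approx-reformulation}; the concluding step—using first-order agreement and Nakayama to get surjectivity of the completion map, then invoking the Hopfian property of noetherian rings after composing with the abstract isomorphism—is precisely the argument recorded in Footnote~\ref{F:local} of the paper.
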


\begin{proof}
The functor
$$F \co \Sch/X_1 \to \Sets, \quad (T \to X_1) \mapsto \Hom(T, X_2) $$
is limit preserving as it can be identified with the representable functor \linebreak  $\Hom_{X_1}(-, X_2 \times X_1)$ corresponding to the finite type morphism $X_2 \times X_1 \to X_1$.   The isomorphism $\hat{\oh}_{X_1, x_1}  \cong \hat{\oh}_{X_2, x_2}$ provides an element of  $F(\Spec \hat{\oh}_{X_1, x_1})$.  By applying Artin approximation with $N=1$, we obtain a
diagram as in \eqref{E:common-etale} with $X' \to X_1$ \'etale at $x'$ and such that $\oh_{X_2, x_2} / \fm_{x_2}^2 \to \oh_{X',x'} / \fm_{x'}^2$ is an isomorphism.
But as $\hat{\oh}_{X',x'}$ is abstractly isomorphic to $\hat{\oh}_{X_2, x_2}$, we can conclude that the homomorphism $\hat{\oh}_{X_2, x_2} \to \hat{\oh}_{X',x'}$  induced by $X' \to X_2$ is an isomorphism.\footnote{\label{F:local}The fact that we are using here is that if $(A, \fm)$ is a local noetherian ring and $\phi \co A \to A$ is a local homomorphism such that the induced map $A/\fm^2 \to A/\fm^2$ is an isomorphism, then $\phi$ is an isomorphism.  Indeed, one can use Nakayama's lemma to show that the inclusion $\phi(\fm) \subset \fm$ is also surjective and then Nakayama's lemma again to show that $\phi \co A \to A$ is surjective.  (See also Lemma \ref{L:surjective} or \cite[Lem.~II.7.4]{hartshorne}.)  Finally, we use the fact a surjective endomorphism of noetherian rings is necessarily an isomorphism.}
\end{proof}

\subsection{Proof of Artin approximation}

\begin{proof}[Proof of Artin approximation (Theorem \ref{T:approx})]
By Black Box \ref{bb2}, the morphism $\oh_{S,s} \to \hat{\oh}_{S,s}$
is geometrically regular.  By Black Box \ref{bb1} (N\'eron--Popescu desingularization), $\hat{\oh}_{S,s} = \dlim B_{\lambda}$ is a direct limit of smooth $\oh_{S,s}$-algebras.  Since $F$ is limit preserving, there exist $\lambda$, a factorization $\oh_{S,s} \to  B_{\lambda} \to \hat{\oh}_{S,s}$  and an element $\xi_{\lambda} \in F(\Spec B_{\lambda})$ whose restriction to $F(\Spec \hat{\oh}_{S,s})$ is $\hat{\xi}$.

Let $B = B_{\lambda}$ and $\xi = \xi_{\lambda}$.  Geometrically, we have a commutative diagram
$$\xymatrix{
\Spec \hat{\oh}_{S,s}   \ar[r]^g \ar@/^4ex/[rr]^{\hat{\xi}}   \ar[rd]			& \Spec B \ar[d] \ar[r]^{\xi}	& F\\
		& \Spec \oh_{S,s}
}$$
where $\Spec B \to \Spec \oh_{S,s}$ is smooth.
We claim that we can find a commutative diagram
\begin{equation} \label{D:desired}
\begin{split}
\xymatrix{
 S'  \ar[rd] \ar@{^(->}[r]	  & \Spec B \ar[d] 	 \\	
& \Spec \oh_{S,s}
}
\end{split}
\end{equation}
where $S' \hookrightarrow \Spec B$ is a closed immersion, $(S', s') \to (\Spec \oh_{S,s}, s)$ is \'etale, and the composition $\Spec \oh_{S,s}/ \fm_s^{N+1} \to S' \to \Spec B$ agrees with the restriction of $g\co \Spec \hat{\oh}_{S,s} \to \Spec B$.\footnote{This is where the approximation occurs.  It is not possible to find a morphism $S' \to \Spec B \to \Spec \oh_{S,s}$ which is \'etale at a point $s'$ over  $s$ such that the composition $\Spec \hat{\oh}_{S,s} \to S' \to \Spec B$ is equal to $g$.  
}

    To see this,  
observe that the $B$-module of relative differentials $\Omega_{B / \oh_{S,s}}$ is locally free.  After shrinking $\Spec B$ around the image of the closed point under $\Spec \hat{\oh}_{S,s} \to \Spec B$, we may assume $\Omega_{B / \oh_{S,s}}$ is free with basis $d b_1, \ldots, db_n$. This induces a homomorphism $\oh_{S,s}[x_1, \ldots, x_n] \to B$ defined by $x_i \mapsto b_i$ and provides a factorization
$$\xymatrix{
 \Spec B \ar[r] \ar[d]		& \AA^n_{\oh_{S,s}} \ar[ld] \\
 \Spec \oh_{S,s}
}$$
where $\Spec B \to \AA^n_{\oh_{S,s}}$ is \'etale.  We may choose a lift of the composition
$$
\oh_{S,s}[x_1, \ldots, x_n] \to B \to \hat{\oh}_{S,s} \to \oh_{S,s} / \fm_s^{N+1}
$$
to a morphism $\oh_{S,s}[x_1, \ldots, x_n] \to \oh_{S,s}$.  This gives a section $s \co \Spec \oh_{S,s} \to \AA^n_{\oh_{S,s}}$ and we define $S'$ as the fibered product
$$\xymatrix{
S' \ar@{^(->}[d]	 \ar[r]		& \Spec \oh_{S,s} \ar@{^(->}[d]^{s} \\
\Spec B \ar[r]\ar@{}[ur]|\square				& \AA^n_{\oh_{S,s}}.
}$$
This gives the desired Diagram \ref{D:desired}.  The composition $\xi' \co S' \to \Spec B \xrightarrow {\xi}  F$ is an element which agrees with $\hat{\xi}$ up to order $N$.

 By ``standard direct limit" methods, we may ``smear out" the \'etale morphism $(S', s') \to (\Spec \oh_{S,s}, s)$ and the element $\xi' \co S' \to F$ to find an \'etale morphism $(S'', s'') \to (S,s)$ and an element $\xi'' \co S'' \to F$ agreeing with $\hat{\xi}$ up to order $N$.  Since this may not be standard for everyone, we spell out the details.  Let $\Spec A \subset S$ be an open affine containing $s$.   We may write $S' = \Spec A'$ and $A' = \oh_{S,s}[y_1, \ldots, y_n]/ (f'_1, \ldots, f'_m)$.  As $\oh_{S,s} = \dlim_{g \notin \fm_s} A_g$, we can find an element $g \notin \fm_s$ and elements $f''_1, \ldots, f''_m \in A_g[y_1, \ldots, y_n]$ restricting to $f'_1, \ldots, f'_m$.  Let $S'' = \Spec A_g[y_1, \ldots, y_n] / (f''_1, \ldots, f''_m)$ and $s'' \in S''$ be the image of $s'$ under $S' \to S''$.  Then $S'' \to S$ is \'etale at $s''$.  As $A' = \dlim_{h \notin \fm_s} A_{hg}[y_1, \ldots, y_n] / (f'_1, \ldots, f'_m)$ and $F$ is limit preserving, we can, after replacing $g$ with $hg$, find an element $\xi'' \in F(S'')$ restricting to $\xi'$ and, in particular, agreeing with $\hat{\xi}$ up to order $N$. Finally, we shrink $S''$ around $s''$ so that $S'' \to S$ is \'etale everywhere.
 \end{proof}

\subsection{Categories fibered in groupoids}
We now shift in the direction of stacks by introducing categories fibered in groupoids.

 Let $S$ be a scheme.  Let $\cX$ be a category and $p\co \cX \to \Sch/S$ be a functor.   We visualize this data as
$$\xymatrix{
\cX \ar[dd]^p &	 \xi_1 \ar@{|->}[dd] \ar[rd]^{\beta} \ar[r]^{\alpha}	& \xi_2\\
				 & & \eta_2\\
\Sch/S	&  T_1 \ar[r]^f & T_2
}$$
where the greek letters $\xi_1$ and $\xi_2,\eta_2$ are objects in $\cX$ over the $S$-schemes $T_1$ and $T_2$, respectively, and the morphisms $\alpha, \beta$ are over $f$.

The main motivation for introducing categories fibered in groupoids rather than simply working with functors as above is to have a nice framework to handle moduli spaces parameterizing objects with automorphisms (see Example \ref{E:curves}).

\begin{definition} \label{D:cfg}
A {\it category fibered in groupoids over $S$} is a category $\cX$ together with a functor $p \co \cX \to \Sch/S$ such that:
\begin{enumerate}
	\item (``existence of pullbacks") For every morphism of $S$-schemes $T_1 \to T_2$ and object $\xi_2$ of $\cX$ over $T_2$, there exists an object $\xi_1$ over $T_1$ completing the diagram
$$\xymatrix{
\xi_1 \ar@{-->}^{\alpha}[r]	\ar@{|-->}[d]& \xi_2 \ar@{|->}[d] \\
T_1 \ar[r]^f	& T_2
}$$
where the morphism $\alpha$ is over $f$ (i.e. $p(\alpha) = f$).
\item (``uniqueness and composition of pullbacks'')  For all diagrams
$$\xymatrix{
\xi_1 \ar[rrd] \ar@{-->}[rd] \ar@{|->}[dd] 	\\
		&\xi_2 \ar[r]\ar@{|->}[dd] 	&\xi_3 \ar@{|->}[dd] \\
T_1 \ar[rrd] \ar[rd]	\\
		& T_2 \ar[r]	&T_3
}$$
there exists a unique arrow $\xi_1 \to \xi_2$ over $T_1 \to T_2$ filling in the diagram.
\end{enumerate}
We will often simply write $\cX$ for a category fibered in groupoids over $S$ where it is implicitly understood that part of the data is the functor $p \co \cX \to \Sch/S$.
\end{definition}

\begin{remark}  Axiom (2) above implies that the pullback in Axiom (1) is unique up to unique isomorphism.  Often we write $\xi_2|_{T_1}$ to indicate a \emph{choice} of pullback of $\xi_2$ under $T_1 \to T_2$.  
\end{remark}

If $\cX$ is a category fibered in groupoids over $\Sch/S$ and $T$ is an $S$-scheme, we define the {\it fiber category over $T$}, denoted by $\cX(T)$, as the category whose objects are objects of $\cX$ over $T$ and whose morphisms are morphisms of $\cX$ over the identity morphism $\id_T \co T \to T$.  Axioms (1) and (2) imply that $\cX(T)$ is a groupoid, i.e.  all morphisms in $\cX(T)$ are isomorphisms.  

\begin{example}(Functors as categories fibered in groupoids) \label{E:functor-as-cfg}

A contravariant functor $F \co \Sch/S \to \Sets$ may be viewed as a category fibered in groupoids over $\Sch/S$ as follows.  Let $\cX_F$ be the category whose objects are pairs $(T,\xi)$, where $T$ is an $S$-scheme and $\xi \in F(T)$.  A morphism $(T, \xi)\to (T',\xi')$ in the category $\cX_F$ is given by a morphism $f \co T \to T'$ such that the pullback of $\xi'$ via $f$ is $\xi$ (i.e. $F(f)(\xi')=\xi$).  The functor $ \cX_F \to \Sch/S$ takes $(T, \xi)$ to $T$ and a morphism $(T, \xi) \to (T', \xi')$ to $T \to T'$.  It is easy to see that $\cX_F$ is a category fibered in groupoids over $S$.

In this case, pullbacks (as defined in Definition \ref{D:cfg}(1)) are unique and each fiber category $\cX_F(T)$ is a setoid, i.e. a set with identity morphisms.
\end{example}

\begin{example} (The moduli space $\cM_g$ of smooth curves) \label{E:curves}

We define $\cM_g$ as the category of pairs $(T, \cC \to T)$ where $T$ is a scheme over $k$ and $\cC \to T$ is a smooth proper morphism such that every geometric fiber is a connected genus $g$ curve.  A morphism $(T, \cC\to T) \to (T', \cC' \to T')$ is the data of a cartesian diagram
$$\xymatrix{
\cC \ar[d] \ar[r]		& \cC' \ar[d] \\
T \ar[r] \ar@{}[ur]|\square			& T.
}$$
The functor $\cM_g \to \Sch/S$ takes $(T, \cC \to T)$ to $T$ and a diagram as above to the morphism $T \to T'$.  It is easy to see that $\cM_g$ is a category fibered in groupoids over $k$.
If $C$ is a smooth curve over $k$, note that the morphisms from $C$ to $C$ in the fiber category $\cM_g(k)$ correspond to automorphisms of $C$.
\end{example}

We say that a category fibered in groupoids $\cX$ over $S$  is  {\it limit preserving} if for every direct limit $\dlim B_{\lambda}$ of $\oh_S$-algebras, the natural functor
$$\dlim \cX(\Spec B_{\lambda}) \to \cX(\Spec \dlim B_{\lambda})$$
is an equivalence of categories.

As before, we have a Yoneda's lemma which asserts that if $T$ is an $S$-scheme, there is a natural equivalence of categories between $\cX(T)$ and the category $\Hom(T, \cX)$ of morphisms $T \to \cX$ of categories fibered in groupoids over $S$.\footnote{Here $T$ is considered as the corresponding category fibered in groupoids (for instance by first considering the corresponding functor as in \eqref{E:representable-functor} and then the corresponding category fibered in groupoids via Example \ref{E:functor-as-cfg}).  A morphism $\cX_1 \to \cX_2$ of categories fibered in groupoids over $S$ is a functor $\cX_1 \to \cX_2$ such that the diagram
$$\xymatrix{
\cX_1 \ar[r] \ar[rd]	& \cX_2 \ar[d] \\
				& \Sch/S
}$$
strictly commutes.}
We now restate Artin's approximation using categories fibered in groupoids rather than functors. 

\begin{theorem} [Groupoid version of Artin approximation] \label{T:approx-groupoid} 
Let $S$ be a scheme of finite type over $k$ and $\cX$ be a limit preserving category fibered in groupoids over $S$.   Let $s \in S$ be a $k$-point and $\hat{\xi}$ an object of $\cX$ over $\Spec \hat{\oh}_{S,s}$.  For any integer $N \ge 0$, there exist an \'etale morphism $(S', s') \to (S, s)$ and an element $\xi'$ of $\cX$ over $S'$ such that $\hat{\xi}|_{\Spec(\oh_{S,s} / \fm_s^{N+1})}$ and  $\xi'|_{\Spec( \oh_{S',s'} / \fm_{s'}^{N+1})}$ are isomorphic (under the identification $\oh_{S,s} / \fm_s^{N+1} \cong \oh_{S',s'} / \fm_{s'}^{N+1}$).
\end{theorem}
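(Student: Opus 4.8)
The plan is to repeat the proof of Theorem~\ref{T:approx} essentially verbatim, reading the limit‑preserving hypothesis in its categorical form and replacing equalities of objects by isomorphisms throughout. Concretely: first descend $\hat\xi$ to a smooth $\oh_{S,s}$‑algebra via N\'eron--Popescu, then carry out the same differential‑forms/fiber‑product construction to produce the \'etale neighbourhood, and finally smear out.

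First I would invoke Black Box~\ref{bb2} to see that $\oh_{S,s}\to\hat\oh_{S,s}$ is geometrically regular, and then Black Box~\ref{bb1} to write $\hat\oh_{S,s}=\dlim B_\lambda$ as a filtered colimit of smooth $\oh_{S,s}$‑algebras, the transition to the colimit giving factorizations $\oh_{S,s}\to B_\lambda\to\hat\oh_{S,s}$. Since $\cX$ is limit preserving, the functor $\dlim\cX(\Spec B_\lambda)\to\cX(\Spec\hat\oh_{S,s})$ is an equivalence of categories; in particular $\hat\xi$ lies in its essential image, so there are an index $\lambda$, an object $\xi_\lambda$ of $\cX$ over $\Spec B_\lambda$, and an isomorphism $\xi_\lambda|_{\Spec\hat\oh_{S,s}}\cong\hat\xi$ in $\cX(\Spec\hat\oh_{S,s})$. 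Write $B=B_\lambda$, $\xi=\xi_\lambda$, and let $g\co\Spec\hat\oh_{S,s}\to\Spec B$ be the induced morphism.

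Next, exactly as in the proof of Theorem~\ref{T:approx} (this step involves neither $F$ nor $\cX$), I would shrink $\Spec B$ around the image of the closed point so that $\Omega_{B/\oh_{S,s}}$ is free, obtaining an \'etale morphism $\Spec B\to\AA^n_{\oh_{S,s}}$; lift the composite $\oh_{S,s}[x_1,\dots,x_n]\to B\to\hat\oh_{S,s}\to\oh_{S,s}/\fm_s^{N+1}$ to a section $\Spec\oh_{S,s}\to\AA^n_{\oh_{S,s}}$; and define $S'=\Spec B\times_{\AA^n_{\oh_{S,s}}}\Spec\oh_{S,s}$. Then $S'\hookrightarrow\Spec B$ is a closed immersion, $(S',s')\to(\Spec\oh_{S,s},s)$ is \'etale, and the composite $\Spec\oh_{S,s}/\fm_s^{N+1}\to S'\hookrightarrow\Spec B$ coincides with the restriction of $g$. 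Set $\xi':=\xi|_{S'}$. Restricting along $\Spec\oh_{S,s}/\fm_s^{N+1}\to S'$, the displayed coincidence identifies $\xi'|_{\Spec\oh_{S,s}/\fm_s^{N+1}}$ with the pullback of $\xi$ along $g|_{\Spec\oh_{S,s}/\fm_s^{N+1}}$, which by the isomorphism $\xi|_{\Spec\hat\oh_{S,s}}\cong\hat\xi$ is isomorphic to $\hat\xi|_{\Spec\oh_{S,s}/\fm_s^{N+1}}$.

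Finally I would smear out as at the end of the proof of Theorem~\ref{T:approx}: choose an open affine $\Spec A\subseteq S$ containing $s$, spread the \'etale $\oh_{S,s}$‑algebra defining $S'$ to an \'etale $A_g$‑algebra (for suitable $g\notin\fm_s$) defining a scheme $S''\to S$ that is \'etale at the image $s''$ of $s'$, and, using that $\cX$ is limit preserving so that the relevant colimit of fiber categories $\to\cX(S')$ is an equivalence, descend $\xi'$ (after possibly enlarging $g$) to an object $\xi''$ of $\cX$ over $S''$ with $\xi''|_{S'}\cong\xi'$; then shrink $S''$ so that $S''\to S$ is everywhere \'etale. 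Since the restriction of $\xi''$ to $\Spec\oh_{S,s}/\fm_s^{N+1}$ is identified with that of $\xi'$, this gives the desired pair $(S'',s'')\to(S,s)$ and $\xi''$. The only genuinely new feature compared with Theorem~\ref{T:approx} is bookkeeping: the colimit comparisons must be read as equivalences of categories rather than bijections, so each descended object is recovered only up to isomorphism and these isomorphisms must be carried along; I expect the mildly fiddly point to be checking the compatibility of the isomorphisms in the smearing‑out step, but this is routine. (One could alternatively attempt a formal reduction to Theorem~\ref{T:approx} by rigidifying $\cX$, but since $\cX$ carries no preferred presentation the direct repetition above is cleaner.)
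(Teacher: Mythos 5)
Your proposal is correct and matches the paper's stated proof, which is precisely that the functorial argument "goes through essentially without change" once equalities are read as isomorphisms. The paper also notes a quicker formal reduction you dismiss too hastily: rather than rigidifying, one passes to the underlying set-valued functor $F(T) = \{\text{isomorphism classes of }\cX(T)\}$, which is limit preserving whenever $\cX$ is, and the conclusion of Theorem~\ref{T:approx-groupoid} only asserts an isomorphism of restrictions, so Theorem~\ref{T:approx} applied to $F$ gives it directly.
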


\begin{proof}
The argument for the functorial case of Artin's approximation goes through essentially without change.  Alternatively, it can be seen to follow directly from the functorial version by considering the underlying functor $F \co \Sch/S \to \Sets$ where for an $S$-scheme $T$, the set $F(T)$ is the set of isomorphism classes of $\cX(T)$.
\end{proof}

\newpage

\section*{Lecture 2: Artin algebraization}
\refstepcounter{section}

The goal of this lecture is to prove Artin algebraization.  
We follow the ideas of Conrad and de Jong from \cite{conrad-dejong} (see also \cite[Tag~\spref{07XB}]{stacks-project}).  Namely, we will show how Artin approximation (Theorem \ref{T:approx}) implies a stronger approximation result (Theorem \ref{T:conrad-dejong}), which we refer to as Conrad--de Jong approximation, and then show how this implies Artin algebraization (Theorem \ref{T:algebraization}).  The logical flow of implications is:
$$\begin{array}{c}
\framebox{N\'eron--Popescu desingularization} \vspace{.1cm}	\\
\Big\Downarrow  \vspace{.1cm} \\
\framebox{Artin approximation} 	\vspace{.1cm}	\\
\Big\Downarrow  \vspace{.1cm} \\
\framebox{Conrad--de Jong approximation} 	\vspace{.1cm}	\\
\Big\Downarrow  \vspace{.1cm} \\
\framebox{Artin algebraization} 	
\end{array}$$

\subsection{Conrad--de Jong approximation}
In Artin approximation the initial data is an object over a complete local noetherian $k$-algebra $\hat{\oh}_{S,s}$ which is assumed to be the completion of a finitely generated $k$-algebra at a maximal ideal.  We will now see that a similar approximation result still holds if this latter hypothesis is dropped where one approximates {\it both} the complete local ring and the object over this ring.  

Recall first that if $(A, \fm)$ is a local ring and $M$ is an $A$-module, then the {\it associated graded module of $M$} is defined as $\Gr_{\fm} M = \bigoplus_{n \ge 0} 
\fm^n M / \fm^{n+1} M$.

 \begin{theorem}[Conrad--de Jong approximation]\label{T:conrad-dejong}
 Let $\cX$ be a limit preserving category fibered in groupoids over $k$.  Let $(R, \fm)$ be a complete local noetherian $k$-algebra and let $\hat{\xi}$ be an object of $\cX$ over $\Spec R$.  Then for every integer $N \ge 0$, there exist
\begin{enumerate}
	\item an affine scheme $\Spec A$ of finite type over $k$ and a $k$-point $u \in \Spec A$, 
	\item an object $\xi_A$ of $\cX$ over $\Spec A$,
	\item an isomorphism $\alpha_{N+1} \co R / \fm^{N+1} \cong A / \fm_u^{N+1}$,
	\item an isomorphism of $\hat{\xi}|_{\Spec(R/ \fm^{N+1})}$ and  $\xi_A|_{\Spec(A/\fm_u^{N+1})}$ via $\alpha_{N+1}$, and
	\item an isomorphism $\Gr_{\fm}(R) \cong \Gr_{\fm_u}(A)$ of graded $k$-algebras.
\end{enumerate}
 \end{theorem}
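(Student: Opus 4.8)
The plan is to present $R$ via the Cohen structure theorem, to move the formal object $\hat\xi$ onto a scheme of finite type over $k$ by the same mechanism used to prove Artin approximation (N\'eron--Popescu desingularization together with the limit-preservingness of $\cX$), and then to ``cut down'' the resulting scheme so that its completion at a point agrees with $R$ both to order $N$ and in the associated graded. To start, note that $R/\fm = k$ (as the statement tacitly requires) and use Cohen's theorem to write $R \cong k[[x_1, \dots, x_n]]/I$ with $n = \dim_k \fm/\fm^2$. Since $\Gr_\fm(k[[x_1, \dots, x_n]]) = k[x_1, \dots, x_n]$ is noetherian, I may choose generators $g_1, \dots, g_m$ of $I$ whose lowest-degree homogeneous parts $g_1^\ast, \dots, g_m^\ast$ generate the initial ideal $\mathrm{in}(I) \subseteq k[x_1, \dots, x_n]$; then $\Gr_\fm(R) = k[x_1, \dots, x_n]/(g_1^\ast, \dots, g_m^\ast)$.

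To transport $\hat\xi$: by Black Boxes \ref{bb2} and \ref{bb1} the homomorphism $\oh_{\AA^n_k,0} \to \hat\oh_{\AA^n_k,0} = k[[x_1, \dots, x_n]]$ is geometrically regular, so $k[[x_1, \dots, x_n]] = \colim_\lambda C_\lambda$ with each $C_\lambda$ smooth --- hence essentially of finite type --- over $\oh_{\AA^n_k,0}$. For $\lambda \gg 0$ we have $g_1, \dots, g_m \in C_\lambda$, so $R = \colim_\lambda C_\lambda/(g_1, \dots, g_m)$ is a filtered colimit of essentially finite type $k$-algebras; as $\cX$ is limit preserving, $\hat\xi$ descends to an object $\xi_\lambda$ of $\cX$ over $C_\lambda/(g_1, \dots, g_m)$ for $\lambda \gg 0$. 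Localizing at the prime lying below $\fm \subset R$ and then spreading out (just as in the last step of the proof of Theorem \ref{T:approx}) yields an affine scheme $\Spec A$ of finite type over $k$, a $k$-point $u$, and an object $\xi_A \in \cX(\Spec A)$ whose restriction to $\Spec \oh_{A,u}$ induces $\hat\xi$ under $\Spec R \to \Spec \oh_{A,u}$. Since $C_\lambda$ is smooth over $\oh_{\AA^n_k,0}$ of some relative dimension $r$ at the point in question, passing to completions gives an isomorphism $\hat\oh_{A,u} \cong k[[x_1, \dots, x_n, t_1, \dots, t_r]]/(G_1, \dots, G_m)$ under which each $g_i$ corresponds to a power series $G_i$ satisfying $G_i(x, \tau(x)) = g_i(x)$, where $\tau_1, \dots, \tau_r \in \fm\, k[[x_1, \dots, x_n]]$ are the images of the relative coordinates $t_1, \dots, t_r$; in particular $\hat\oh_{A,u}/(t_1 - \tau_1, \dots, t_r - \tau_r) \cong R$ compatibly with $\hat\xi$.

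Now I cut down. Fix $D \gg N$, let $\tau_i^{(D)} \in k[x_1, \dots, x_n] \subseteq \oh_{A,u}$ be the truncation of $\tau_i$ modulo degree $D+1$, and --- after shrinking $\Spec A$ so that the $t_i$ and $\tau_i^{(D)}$ are global functions --- set $A' := A/(t_1 - \tau_1^{(D)}, \dots, t_r - \tau_r^{(D)})$, let $u'$ be the image of $u$, and put $\xi_{A'} := \xi_A|_{\Spec A'}$, an object of $\cX$ over the finite type scheme $\Spec A'$. Then $\hat\oh_{A',u'} \cong k[[x_1, \dots, x_n]]/(h_1, \dots, h_m)$ with $h_i := G_i(x, \tau^{(D)}(x))$, and $h_i \equiv g_i \bmod \fm^{D+1}$. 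As $D \ge N$, the ideals $(t_i - \tau_i) + \fm^{N+1}$ and $(t_i - \tau_i^{(D)}) + \fm^{N+1}$ of $\hat\oh_{A,u}$ coincide, so $R/\fm^{N+1}$ and $A'/\fm_{u'}^{N+1}$ are literally the same quotient of $\hat\oh_{A,u}$; this furnishes the isomorphism $\alpha_{N+1}$ of (3), and since $\hat\xi|_{\Spec(R/\fm^{N+1})}$ and $\xi_{A'}|_{\Spec(A'/\fm_{u'}^{N+1})}$ are both the restriction of $\xi_A|_{\Spec \hat\oh_{A,u}}$ to that quotient, they are canonically isomorphic, which is (4). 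Finally, because $h_i \equiv g_i \bmod \fm^{D+1}$ and $\{g_i\}$ is a standard basis of $I$, the elements $\{h_i\}$ form a standard basis of $(h_1, \dots, h_m)$ provided $D$ was chosen large enough; hence $\mathrm{in}((h_1, \dots, h_m)) = \mathrm{in}(I)$ and $\Gr_{\fm_{u'}}(A') = k[x_1, \dots, x_n]/\mathrm{in}(I) = \Gr_\fm(R)$, which is (5).

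I expect the main obstacle to be the commutative-algebra fact invoked at the end: that a standard basis of an ideal of $k[[x_1, \dots, x_n]]$ stays a standard basis when each generator is perturbed by a term of sufficiently high $\fm$-adic order, so that the initial ideal --- and hence the associated graded --- is unchanged. This is classical --- it follows from Buchberger's criterion for standard bases, since the standard reductions witnessing that the finitely many $S$-series of a standard basis reduce to zero involve only bounded degrees --- but it must be set up with the right uniformity in the perturbation order. A second, more routine but fussy, point is the bookkeeping in the transporting step: extracting the precise shape of $\hat\oh_{A,u}$ from the smoothness of $C_\lambda$ over $\oh_{\AA^n_k,0}$, and checking that the identifications of truncated rings, of truncated objects, and of associated gradeds are all compatible along $\oh_{A,u} \to \hat\oh_{A,u} \to R$.
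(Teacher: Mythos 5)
Your approach differs from the paper's at the crucial step. The paper descends $\hat\xi$ to a finite type $V$, chooses a free \emph{resolution} $\hat\oh_{V,v}^{\oplus r}\xrightarrow{\hat\alpha}\hat\oh_{V,v}^{\oplus s}\xrightarrow{\hat\beta}\hat\oh_{V,v}\to R$, and applies Artin approximation to the functor of \emph{complexes}; the essential payoff is that the approximated pair $(\alpha',\beta')$ still satisfies $\beta'\alpha'=0$, and Lemma~\ref{L:artin-rees-generalization} uses this two-term complex structure (together with the Artin--Rees condition) to force $\Gr_{\fm}(\coker\hat\beta)\cong\Gr_{\fm}(\coker\beta')$. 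Your proof carries out the analogue of this step by perturbing only the \emph{generators} — the $h_i=G_i(x,\tau^{(D)})$ — with nothing constraining the syzygies. Items (1)--(4) of your argument are fine, but item (5) is where the gap lies.

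The commutative-algebra fact you invoke at the end — that a standard basis of $I\subset k[[x_1,\dots,x_n]]$ stays a standard basis (with unchanged initial ideal) under any perturbation of the generators by terms of sufficiently high $\fm$-adic order — is false. Take $I=(x^2,xy)\subset k[[x,y]]$; here $\{x^2,xy\}$ is a standard basis, since the unique $S$-polynomial $y\cdot x^2-x\cdot xy$ vanishes identically, and $\mathrm{in}(I)=(x^2,xy)$. For any $D\geq 1$ set $h_1=x^2+y^{D+1}$, $h_2=xy+x^{D+1}$, so $h_i\equiv g_i\bmod \fm^{D+1}$ and $h_i^\ast=g_i^\ast$. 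Then
$$
y\,h_1-x\,h_2=y^{D+2}-x^{D+2}\in(h_1,h_2),
$$
and the leading form $y^{D+2}-x^{D+2}$ does not lie in $(x^2,xy)=x\cdot(x,y)$, since the $y^{D+2}$ term is not divisible by $x$. Hence $\mathrm{in}((h_1,h_2))\supsetneq\mathrm{in}(I)$ and $\Gr_\fm\bigl(k[[x,y]]/(h_i)\bigr)\not\cong\Gr_\fm(R)$, for every $D$. Your heuristic justification (``Buchberger reductions of a standard basis occur in bounded degree'') does not repair this: in the example the original $S$-polynomial is literally zero, and the perturbation introduces a degree-$(D{+}2)$ error whose normal form with respect to $\{h_1,h_2\}$ is nonzero. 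What goes wrong is exactly that the syzygy $(y,-x)$ of $(g_1,g_2)$ does not perturb to a nearby syzygy of $(h_1,h_2)$; one checks that the syzygy module of $(h_1,h_2)$ is generated by $\bigl(x(y+x^D),-(x^2+y^{D+1})\bigr)$, whose leading form has degree $2$, not $1$.

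This is not merely a hypothetical failure of a stronger lemma: your construction leaves enough freedom in the choice of $C_\lambda$, lifts $g_i^\lambda$, and relative coordinates $t_j$ that the bad perturbation above can actually be realized (e.g.\ with $\cX=\Spec k$, $R=k[[x,y]]/(x^2,xy)$, $C_\lambda=\oh_{\AA^2,0}[t_1,t_2]$ suitably localized, $G_1=x^2+y^{D+1}+t_1$, $G_2=xy+x^{D+1}+t_2$, $\tau_1=-y^{D+1}$, $\tau_2=-x^{D+1}$). So the argument as written does not establish (5). To salvage it one would have to either (a)~approximate the relations $\hat\alpha$ along with the generators, reducing in effect to the paper's Lemma~\ref{L:artin-rees-generalization}, or (b)~prove a sharper stability statement that exploits the specific shape of the perturbation $t_j\mapsto\tau_j^{(D)}$ — but neither is done, and the naive standard-basis stability claim cannot carry the weight placed on it.
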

 
 \begin{remark}  This was proven in \cite{conrad-dejong} (see also  \cite[Tag~\spref{07XB}]{stacks-project}). 
 \end{remark}

The proof of this theorem will proceed by simultaneously approximating equations and relations defining $R$ and the object $\hat{\xi}$ of $\cX$ over $\Spec R$.
The statements (1)--(4) will be easily obtained as a consequence of Artin approximation.
A nice insight of Conrad and de Jong is that condition (5) can be ensured by Artin approximation, and moreover that this condition suffices to imply the isomorphism of complete local $k$-algebras in Artin algebraization.  As such, condition (5) takes the most work to establish.

\begin{proof}[Proof of Conrad--de Jong approximation (Theorem \ref{T:conrad-dejong})]
Since $\cX$ is limit preserving and $R$ is the direct limit over its finitely generated $k$-subalgebras, we can find an affine scheme $V =\Spec B$ of finite type over $k$ and an object $\xi_V$ of $\cX$ over $V$ together with a 2-commutative diagram
$$
\xymatrix{
\Spec R \ar[r]\ar@/^3ex/[rr]^{\hat{\xi}} & V\ar[r]_{\xi_V} & \cX.
}
$$
Let $v \in V$ be the image of the maximal ideal $\fm \subset R$.    
After adding generators to the ring $B$ if necessary, we can assume that the composition $\hat{\oh}_{V,v} \to R \to R /\fm^2$ is surjective.  A basic fact about complete local noetherian rings (Lemma \ref{L:surjective})  implies that $\hat{\oh}_{V,v} \to R$ is surjective.  
The goal now is to simultaneously approximate over $V$ the equations and relations defining the closed immersion $\Spec R \hookrightarrow \Spec \hat{\oh}_{V,v}$ and the object $\hat{\xi}$.  In order to accomplish this goal, we choose a resolution 
\begin{equation} \label{E:resolution}
\hat{\oh}_{V,v}^{\oplus r}\myrightarrow{\hat{\alpha}} \hat{\oh}_{V,v}^{\oplus s} \myrightarrow{\hat{\beta}} \hat{\oh}_{V,v} \myrightarrow{} R,
\end{equation}
and consider the functor
$$\begin{aligned}
F \co (\Sch/V) & \to {\rm Sets} \\
  (T \to V) 	& \mapsto \big\{ \text{complexes } \oh_T^{\oplus r} \myrightarrow{\alpha} \oh_T^{\oplus s} \myrightarrow{\beta} \oh_T \big\}
\end{aligned}$$
which is easily checked to be limit preserving.  
The resolution in \eqref{E:resolution} yields an element of $F(\Spec \hat{\oh}_{V,v})$.
If we apply Artin approximation (Theorem \ref{T:approx}), we obtain an \'etale morphism $(V' = \Spec B', v') \to (V, v)$ and an
element 
\begin{equation} \label{E:complex-approx}
(B'^{\oplus r} \myrightarrow{\alpha'} B'^{\oplus s} \myrightarrow{\beta'} B') \in F(V')
\end{equation} such that $\alpha', \beta'$ are equal to $\hat{\alpha}, \hat{\beta}$ modulo $\fm^{N+1}$.  

Let $U = \Spec A \inj \Spec B' = V'$ be the closed subscheme defined by $\im \beta'$ and set $u = v' \in U$.
We have an induced object 
 $$\xi_A \co U \hookrightarrow V' \to V  \xrightarrow{\xi_V} \cX$$
 of $\cX$ over $U$.  As $R = \coker \hat{\beta}$ and $A = \coker \beta'$, we have an isomorphism $R / \fm^{N+1} \cong A / \fm_u^{N+1}$ together with an isomorphism of $\hat{\xi}|_{\Spec(R/ \fm^{N+1})}$ and \linebreak  $\xi_A|_{\Spec(A/\fm_u^{N+1})}$.  This gives statements (1)--(4).
 
 We are left to establish statement (5).  First, we provide some motivation for the technical definition (Definition \ref{D:AR}) and lemma (Lemma \ref{L:artin-rees-generalization})  below.  To establish (5), we need to show that there are isomorphisms $\fm^n / \fm^{n+1} \cong \fm_u^n / \fm_u^{n+1}$.
 For $n \le N$, this is guaranteed by the isomorphism $R / \fm^{N+1} \cong A / \fm_u^{N+1}$. 
 On the other hand, for $n \gg 0 $, this can be seen to be a consequence of the Artin--Rees lemma (see the proof of Lemma \ref{L:artin-rees-generalization}(3) below).  We will need to handle the middle range of $n$ and we accomplish this be controlling the constant appearing in the Artin--Rees lemma. 
 
 We now establish statement (5) using Definition \ref{D:AR} and Lemma \ref{L:artin-rees-generalization}.  We first realize that before we applied Artin approximation, we could have increased $N$ in order to guarantee that $(\AR)_N$ holds for $\hat{\alpha}$ and $\hat{\beta}$.  Therefore, we are free to assume this.  Now statement (5) follows directly if we apply Lemma \ref{L:artin-rees-generalization} to the exact complex $\hat{\oh}_{V,v}^{\oplus r}\myrightarrow{\hat{\alpha}} \hat{\oh}_{V,v}^{\oplus s} \myrightarrow{\hat{\beta}} \hat{\oh}_{V,v}$ from \eqref{E:resolution} and the complex 
 $
\hat{\oh}_{V,v}^{\oplus r}\myrightarrow{\hat{\alpha}'} \hat{\oh}_{V,v}^{\oplus s} \myrightarrow{\hat{\beta}'} \hat{\oh}_{V,v}
$
obtained by restricting  \eqref{E:complex-approx} to $F(\Spec \hat{\oh}_{V,v})$.
\end{proof}

\bigskip

\noindent
{\it The Artin--Rees condition. }

\begin{definition}\label{D:AR} Let $(A,\fm)$ be a local noetherian ring. Let $\varphi \co M \to N$ be a morphism of finite $A$-modules.  Let  $c \geq 0$ be an integer.  We say that {\it $(\AR)_c$ holds for $\varphi$} if
\[
\varphi(M)\cap \fm^n N \subset \varphi(\fm^{n-c}M),\quad \forall n\geq c .
\]
\end{definition}

\begin{remark}
The Artin--Rees lemma (see \cite[Prop.~10.9]{atiyah-macdonald} or  \cite[Lem. 5.1]{eisenbud}) implies that   $(\AR)_c$ holds for $\varphi$ if $c$ is sufficiently large.

\end{remark}

The following lemma from \cite[\S3]{conrad-dejong} (see also \cite[Tags 07VD and 07VF]{stacks-project}) is straightforward to prove.  

\begin{lemma}\label{L:artin-rees-generalization}
Let $(A, \fm)$ be a local noetherian ring.  Let
$$L \xrightarrow{\alpha}M \xrightarrow{\beta} N  \hspace{1cm} \text{and} \hspace{1cm} L \xrightarrow{\alpha'} M \xrightarrow{\beta'} N$$
be two complexes of finite $A$-modules.  Let $c$ be a positive integer. Assume
that
\begin{enumerate}
\item[(a)] the first sequence is exact, 
\item[(b)] the complexes are isomorphic modulo $\fm^{c+1}$, and 
\item[(c)] $(\AR)_c$ holds for $\alpha$ and $\beta$.
\end{enumerate}
Then
\begin{enumerate}
\item $(\AR)_c$ holds for $\beta'$,
\item the second sequence is exact, and 
\item there exists an isomorphism 
$ \Gr_{\fm}(\coker \beta) \to  \Gr_{\fm}(\coker \beta')$
 of 
$\Gr_{\fm}(A)$-modules.
\end{enumerate}
\end{lemma}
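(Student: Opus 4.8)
The plan is to prove assertions $(1)$, $(2)$, $(3)$ in that order: $(1)$ carries all the content, and $(2)$, $(3)$ follow from the same mechanism together with some formal bookkeeping. First I would reduce to the case in which the isomorphisms of hypothesis $(b)$ are identities, so that $(\alpha-\alpha')(L)\subset\fm^{c+1}M$ and $(\beta-\beta')(M)\subset\fm^{c+1}N$; this costs nothing, since in the situation to which the lemma is applied $L,M,N$ are free (so automorphisms of their reductions modulo $\fm^{c+1}$ lift), and in general one just carries the given isomorphisms through the argument. Two tools will be used repeatedly: exactness of the first complex, which turns a relation $\beta(y)=0$ into $y\in\im\alpha$, and $(\AR)_c$ for $\alpha$ and $\beta$, which promotes ``lies in $\fm^n N$'' to ``is the $\beta$-image of an element of $\fm^{n-c}M$'' (and similarly for $\alpha$).

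For $(1)$ I would show $\beta'(M)\cap\fm^n N\subset\beta'(\fm^{n-c}M)$ by induction on $n\ge c$. In the inductive step, given $x$ with $\beta'(x)\in\fm^{n+1}N$, the inductive hypothesis produces $y\in\fm^{n-c}M$ with $\beta'(y)=\beta'(x)$; as $\beta-\beta'$ vanishes modulo $\fm^{c+1}$ and $y$ lies $n-c$ deep, $\beta(y)$ lands in $\fm^{n+1}N$, so $(\AR)_c$ for $\beta$ gives $z\in\fm^{n+1-c}M$ with $\beta(z)=\beta(y)$. Exactness of the first complex yields $y-z=\alpha(w)$, and applying $\beta'$ and using $\beta'\circ\alpha'=0$ gives $\beta'(x)=\beta'(z)+\beta'\bigl((\alpha-\alpha')(w)\bigr)$. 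It remains to place the error term in $\beta'(\fm^{n+1-c}M)$, and this is the one delicate point: if $n\le 2c$ it is immediate because $(\alpha-\alpha')(w)\in\fm^{c+1}M\subset\fm^{n+1-c}M$; if $n\ge 2c$, then $\alpha(w)=y-z\in\fm^{n-c}M$ with $n-c\ge c$, so $(\AR)_c$ for $\alpha$ lets one take $w$ in $\fm^{n-2c}L$, and then $(\alpha-\alpha')(w)\in\fm^{n+1-c}M$. The two ranges overlap at $n=2c$, so the induction closes.

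For $(2)$ the same device gives a one-step approximation: if $x\in\fm^j M$ with $\beta'(x)=0$, then $\beta(x)=(\beta-\beta')(x)$ lies $c+1$ deeper than $x$, so $(\AR)_c$ for $\beta$ and exactness produce $w$ with $x-\alpha(w)\in\fm^{j+1}M$; replacing $\alpha$ by $\alpha'$ perturbs this only by an element of $\fm^{j+1}M$ (invoking $(\AR)_c$ for $\alpha$ when $j\ge c$), and $x-\alpha'(w)$ still lies in $\ker\beta'$. Iterating from $j=0$ shows that any element of $\ker\beta'$ has image in $M/\im\alpha'$ lying in $\fm^n(M/\im\alpha')$ for all $n$, hence is zero by Krull's intersection theorem; thus $\ker\beta'=\im\alpha'$. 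For $(3)$ I would use that for a submodule $K\subset N$ the canonical surjection of graded $\Gr_\fm(A)$-modules $\Gr_\fm(N)\twoheadrightarrow\Gr_\fm(N/K)$ has kernel $K^{\ast}:=\bigoplus_n\bigl(K\cap\fm^n N+\fm^{n+1}N\bigr)/\fm^{n+1}N$, and likewise for $K'$; so $(3)$ reduces to the equality $K^{\ast}=(K')^{\ast}$, i.e.\ to $K\cap\fm^n N+\fm^{n+1}N=K'\cap\fm^n N+\fm^{n+1}N$ for all $n$, where $K=\im\beta$ and $K'=\im\beta'$. This I would prove by the same trick: given $\beta(x)\in\fm^n N$, if $n\le c$ then $(\beta-\beta')(x)\in\fm^{c+1}N\subset\fm^{n+1}N$ already puts $\beta(x)$ into $\im\beta'+\fm^{n+1}N$, while if $n>c$ one first rewrites $\beta(x)=\beta(y)$ with $y\in\fm^{n-c}M$ using $(\AR)_c$ for $\beta$ and then compares $\beta$ with $\beta'$ on $y$; intersecting with $\fm^n N$ gives one inclusion, and the reverse is identical using $(\AR)_c$ for $\beta'$ from part $(1)$. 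Hence $\Gr_\fm(\coker\beta)=\Gr_\fm(N)/K^{\ast}=\Gr_\fm(N)/(K')^{\ast}=\Gr_\fm(\coker\beta')$.

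The step I expect to be the main obstacle is the index bookkeeping in $(1)$: one must check that the range of $n$ in which the congruence modulo $\fm^{c+1}$ is already deep enough overlaps the range in which the Artin--Rees hypothesis on $\alpha$ becomes available, leaving nothing uncontrolled in between. This is precisely why all three hypotheses are attached to the \emph{same} constant $c$, and it is this uniformity --- rather than just the large-$n$ estimate of the ordinary Artin--Rees lemma --- that produces assertion $(3)$, and hence condition $(5)$ of Conrad--de Jong approximation.
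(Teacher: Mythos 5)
Your proposal is correct and follows essentially the same strategy as the paper: a one-step improvement argument that uses $(\AR)_c$ for $\beta$ to descend, exactness to land in $\im\alpha$, and $(\AR)_c$ for $\alpha$ (when the index is large enough) together with the congruence modulo $\fm^{c+1}$ (when it is small) to control the error, followed by Krull's intersection theorem for part $(2)$ and the identity $\fm^{n+1}N+\beta(M)\cap\fm^n N=\fm^{n+1}N+\beta'(M)\cap\fm^n N$ for part $(3)$. The only organizational difference is that the paper packages the inductive mechanism into a single stronger claim, namely $(\beta')^{-1}(\fm^n N)\subset\alpha'(L)+\fm^{n-c}M$ for $n\ge c$, which yields $(1)$ and $(2)$ simultaneously, whereas you run two parallel inductions — one proving $(\AR)_c$ for $\beta'$ directly, one for exactness — and your case split is on $n\lessgtr 2c$ rather than the paper's $r\lessgtr c$; these are equivalent reparametrizations of the same bookkeeping.
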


\begin{remark}
Only conclusion (3) was used in the proof of Conrad--de Jong approximation.  Statements (1) and (2) are included as they serve as convenient steps in the proof of (3).  Indeed, it is the fact that $(\AR)_c$ holds for $\beta$ that implies the containment ``$\subset$" in \eqref{E:key-equality}.  Likewise, the other containment ``$\supset$" will hold once we know $(\AR)_c$ holds for $\beta'$.
\end{remark}

\begin{proof}
We claim that $(\beta')^{-1}(\fm^n N) \subset \alpha'(L) + \fm^{n-c}M$ for all $n \ge c$.  Let $m \in M$ such that $\beta'(m) \in \fm^n N$.  Suppose we can find $l \in L$ such that  $m_0 = m - \alpha'(l) \in \fm^r M$ with $r < n - c$.  We will show that we can modify $l$ in order to increase $r$ by $1$.  Note that
$$\begin{aligned}
\beta(m_0) 	& = \beta'(m_0) + (\beta - \beta')(m_0) \\
				&  = \beta'(m) + (\beta - \beta')(m_0) \\
				& \in \fm^n N + \fm^{r+c+1} N = \fm^{r+c+1} N
\end{aligned}$$
Since $(\AR)_c$ holds for $\beta$, we have $\beta(m_0) \in \beta(\fm^{r+1}M)$.   So we can write $\beta(m_0) = \beta(m_1)$ with $m_1 \in \fm^{r+1}M$.  As the first complex is exact, we may write $m_0 - m_1 = \alpha(l_1)$ for $l_1 \in L$.  Note that $m_0 - m_1 \in \fm^r M$.  We now break the argument into whether or not $r \ge c$.  If $r \ge c$, then as $(\AR)_c$ holds for $\alpha$, we have $m_0 - m_1 \in \alpha(\fm^{r-c}L)$ and we may replace $l_1$ with an element of $\fm^{r-c}L$.  In this case, $(\alpha - \alpha')(l_1) \in \fm^{r-c} \cdot \fm^{c+1} M = \fm^{r+1} M$.  On the other hand, if $r < c$, then we automatically have $(\alpha - \alpha')(l_1) \in \fm^{c+1} M \subset \fm^{r+1} M$.  Therefore,
$$\begin{aligned}
m - \alpha'(l+l_1)  	& = m_0 - \alpha'(l_1) \\
				& = m_1 + (\alpha - \alpha')(l_1)  \in \fm^{r+1} M.
\end{aligned}$$
By induction, this establishes the claim.

For (1), the condition that $\beta'(M) \cap \fm^n N \subset \beta'(\fm^{n-c}M)$ follows directly from the claim.  For (2), observe that the claim coupled with Krull's intersection theorem implies that
$$(\beta')^{-1}(0) = (\beta')^{-1}( \bigcap_n \fm^n N) \subset \bigcap_n \big(\alpha'(L) + \fm^{n-c} M) = \alpha'(L)  $$
which gives exactness of the second sequence.

For (3), for $n \ge 0$, we have
$$\Gr_{\fm}(\coker \beta)_n = \fm^n N / (\fm^{n+1}N + \beta(M) \cap \fm^n N)$$
and a similar description of $\Gr_{\fm}(\coker \beta')_n$.  To obtain an isomorphism \linebreak $ \Gr_{\fm}(\coker \beta) \to  \Gr_{\fm}(\coker \beta')$, it clearly suffices to show that
\begin{equation} \label{E:key-equality}
\fm^{n+1}N + \beta(M) \cap \fm^n N = \fm^{n+1} N + \beta'(M) \cap \fm^n N.
\end{equation}
We first show the containment ``$\subset$".  If $n \le c$, then the statement is clear as $\beta = \beta' \mod \fm^{c+1}$.  If $n > c$, suppose $x\in \beta(M) \cap \fm^n N$.  Since $(\AR)_c$ holds for $\beta$, we may write $x = \beta(m)$ for $m \in \fm^{n-c} M$.  Let $x' = \beta'(m)$.  Then $x-x' = (\beta - \beta')(m) \in \fm^{c+1} \cdot \fm^{n-c} M = \fm^{n+1}N$.  Since $(\AR)_c$ also holds for $\beta'$, by symmetry we obtain the other containment ``$\supset$".
\end{proof}

\begin{remark} Conrad--de Jong approximation directly implies Artin approximation (Theorem \ref{T:approx-groupoid}).  Indeed, let $R = \hat{\oh}_{S,s}$ and $N \ge 1$.  Apply Conrad--de Jong approximation to $\cX \times S$ to obtain a finite type morphism $(U=\Spec A,u) \to (S,s)$, an object $\xi_A$ of $\cX$ over $U$, an isomorphism $\alpha_{N+1} \co \oh_{S,s}/ \fm_s^{N+1} \cong \oh_{U,u}/ \fm_u^{N+1}$ and an isomorphism $\hat{\xi}|_{\Spec(\oh_{S,s}/ \fm_s^{N+1})}$ and  $\xi_A|_{\Spec(\oh_{U,u}/ \fm_u^{N+1})}$ via $\alpha_{N+1}$.  We claim that $U \to S$ is \'etale at $u$.  Since we know that $\hat{\oh}_{S,s} / \fm_s^2 \to \hat{\oh}_{U,u} / \fm_u^2$ is an isomorphism, the induced homomorphism $\hat{\oh}_{S,s} \to \hat{\oh}_{U,u}$ is surjective.  But condition (5) in Conrad--de Jong approximation gives isomorphisms 
$$\fm_s^N /  \fm_s^{N+1} \to 
\fm_u^N /  \fm_u^{N+1} $$
for every $N$.  This implies that $\hat{\oh}_{S,s} \to \hat{\oh}_{U,u}$ is injective and thus an isomorphism.
\end{remark}

\subsection{Artin algebraization}

Artin algebraization has a stronger conclusion than Artin approximation or Conrad--de Jong approximation in that no approximation is necessary.  It guarantees the existence of an object $\xi_A$ over a pointed affine scheme $(\Spec A,u)$ of finite type over $k$ which agrees with the given effective formal deformation $\hat{\xi}$ to all orders.  However, in order to ensure this, it is necessary to impose a further condition on the object $\hat{\xi}$.  This condition is known as formal versality and is extremely natural (see Remarks \ref{R:versal1}-\ref{R:versal3}).

\begin{definition}  \label{D:versal} Let $\cX$ be a category fibered in groupoids over $k$.  Let $R$ be a complete local noetherian $k$-algebra and $x \in \Spec R$ be the closed point.  We say that an object $\hat{\xi}$ of $\cX$ over $\Spec R$ is {\it formally versal at $x$} if for every commutative diagram
\begin{equation} 
\begin{split}\label{D:lifting-versal}
\xymatrix{
\Spec k(x) \ar@{^(->}[r]	& \Spec B \ar[r]  \ar@{^(->}[d]	&  \Spec R \ar[d]^{\hat{\xi}} \\
& \Spec B' \ar[r] \ar@{-->}[ur]			& \cX
} \end{split}
\end{equation}
where $B' \to B$ is a surjection of artinian $k$-algebras, there is a lift $\Spec B' \to \Spec R$ filling in the above diagram.
\end{definition}

\begin{remark}  \label{R:versal1}  In other words, the formal versality of $\hat{\xi}$ means that whenever you have a surjection $B' \to B$  of artinian $\oh_S$-algebras, an object $\eta'$ of $\cX$ over $\Spec B'$, a morphism $\Spec B \to \Spec R$, and an isomorphism $\alpha \co \hat{\xi}|_{\Spec B} \cong \eta'|_{\Spec B}$, there exist a morphism $\Spec B' \to \Spec R$ and an isomorphism \linebreak $\alpha' \co \hat{\eta}|_{\Spec B'} \cong \eta'$ extending $\alpha$.  

Let $\xi_0 = \hat{\xi}|_{\Spec k(x)}$ be the restriction of $\hat{\xi}$ to the closed point. 
Then \linebreak $\eta \co \Spec B \to \cX$ can be viewed as an {\it infinitesimal deformation} of $\xi_0$ and $\eta' \co \Spec B' \to \cX$ a further infinitesimal deformation of $\eta$.  
In colloquial language, the condition of formal versality implies that the family of objects $\hat{\xi} \co \Spec R \to \cX$ contains all infinitesimal deformations of $\xi_0$. 
\end{remark}

\begin{remark}
The lifting criterion in Diagram \eqref{D:lifting-versal} above should remind the reader of the formal lifting property for smooth morphisms.  Indeed, if $f \co X \to Y$ is a morphism of finite type $k$-schemes, then it is a theorem of Grothendieck \cite[IV.17.14.2]{ega} that $f$ is smooth at $x$ if and only if the above lifting criterion holds (replacing of course $\hat{\xi} \co \Spec R \to \cX$ with $f \co X \to Y$).
\end{remark}

\begin{remark} \label{R:versal3}  \label{R:formal-versality}
It is easy to see that the condition of formal versality of \linebreak $\hat{\xi} \co \Spec R \to \cX$ only depends on the compatible family $\{\xi_n = \hat{\xi}|_{\Spec R / \fm^{n+1}}\}$ of restrictions. Therefore, we can extend the above definition to compatible families $\{\xi_n\}$.
\end{remark}

 \begin{theorem}[Artin algebraization] \label{T:algebraization} 
 Let $\cX$ be a limit preserving category fibered in groupoids over $k$.  Let $(R, \fm)$ be a complete local noetherian $k$-algebra, and let $\hat{\xi}$ be a formally versal object of $\cX$ over $\Spec R$.  There exist
\begin{enumerate}
	\item an affine scheme $\Spec A$ of finite type over $k$ and a $k$-point $u \in \Spec A$, 
	\item an object $\xi_A$ of $\cX$ over $\Spec A$,
	\item an isomorphism $\alpha \co R \iso \hat{A}_{\fm_u}$  of $k$-algebras, and 
	\item a compatible family of isomorphisms $\hat{\xi}|_{\Spec R / \fm^{n+1}} \cong \xi_U|_{\Spec A / \fm_u^{n+1}}$ (under the identification $R / \fm^{n+1} \cong A / \fm_{u}^{n+1}$) for $n \ge 0$.
\end{enumerate}
 \end{theorem}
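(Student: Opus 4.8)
The plan is to deduce the theorem from Conrad--de Jong approximation (Theorem \ref{T:conrad-dejong}), using formal versality of $\hat\xi$ precisely to upgrade the various ``modulo $\fm^{N+1}$'' statements into an honest isomorphism $R \iso \hat A_{\fm_u}$ of complete local rings. Concretely, I would first apply Theorem \ref{T:conrad-dejong} with $N = 1$ (any $N \ge 1$ works) to produce items (1) and (2) of the statement --- a pointed affine finite type scheme $(\Spec A, u)$ and an object $\xi_A$ of $\cX$ over $\Spec A$ --- together with an isomorphism $\alpha_2 \co R/\fm^2 \iso A/\fm_u^2$ compatible with $\hat\xi$ and $\xi_A$ on $\Spec A/\fm_u^2$, and, crucially, an isomorphism $\Gr_{\fm}(R) \cong \Gr_{\fm_u}(A)$ of graded $k$-algebras.

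Next I would construct a local $k$-algebra homomorphism $\phi \co R \to \hat A_{\fm_u}$ by climbing the tower $\{A/\fm_u^{n+1}\}_{n \ge 0}$. The base case $n = 1$ consists of the composite $R \surj R/\fm^2 \xrightarrow{\alpha_2} A/\fm_u^2$ together with the isomorphism $\hat\xi|_{\Spec A/\fm_u^2} \cong \xi_A|_{\Spec A/\fm_u^2}$ coming from Theorem \ref{T:conrad-dejong}(4). Given a morphism $\Spec A/\fm_u^{n} \to \Spec R$ carrying an isomorphism $\hat\xi|_{\Spec A/\fm_u^{n}} \cong \xi_A|_{\Spec A/\fm_u^{n}}$, I would apply formal versality of $\hat\xi$ (Definition \ref{D:versal}) to the surjection $A/\fm_u^{n+1} \surj A/\fm_u^{n}$ of artinian $k$-algebras and the object $\xi_A|_{\Spec A/\fm_u^{n+1}}$ to obtain a lift $\Spec A/\fm_u^{n+1} \to \Spec R$ together with an isomorphism of object restrictions extending the previous one. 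Each step is a local homomorphism, since it reduces modulo $\fm_u^{n}$ to a local one; passing to the inverse limit over $n$ then yields a local homomorphism $\phi \co R \to \hat A_{\fm_u}$ and a compatible family of isomorphisms $\hat\xi|_{\Spec R/\fm^{n+1}} \cong \xi_A|_{\Spec A/\fm_u^{n+1}}$. Setting $\alpha = \phi$, item (4) will follow as soon as $\phi$ is shown to be an isomorphism.

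It remains to prove that $\phi$ is an isomorphism, which is the heart of the matter. Surjectivity is formal: modulo $\fm_u^2$ the map $\phi$ is $\alpha_2$ by construction, hence induces an isomorphism $\fm/\fm^2 \to \fm_u/\fm_u^2$, so since $R$ is complete, $\phi$ is surjective (Lemma \ref{L:surjective}, i.e. a Nakayama argument). Writing $I = \ker\phi$, we get $\hat A_{\fm_u} \cong R/I$, so the induced map $\Gr(\phi) \co \Gr_{\fm}(R) \to \Gr_{\fm_u}(\hat A_{\fm_u}) = \Gr_{\fm_u}(A)$ is a \emph{surjective} homomorphism of graded $k$-algebras. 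But Theorem \ref{T:conrad-dejong}(5) furnishes an abstract isomorphism $\Gr_{\fm}(R) \cong \Gr_{\fm_u}(A)$, so the two sides have the same finite Hilbert function; a surjection of finite-dimensional $k$-vector spaces of equal dimension in each degree is an isomorphism, hence $\Gr(\phi)$ is an isomorphism. If $I \ne 0$, then by Krull's intersection theorem there is $x \in I$ with $x \in \fm^{n} \setminus \fm^{n+1}$ for some $n$, and its nonzero class in $\Gr_{\fm}(R)_n$ is killed by $\Gr(\phi)$, contradicting injectivity. Therefore $I = 0$ and $\phi \co R \iso \hat A_{\fm_u}$.

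I expect the construction of $\phi$ (the inductive use of formal versality plus the bookkeeping of the compatible object isomorphisms) to be routine, if slightly fiddly; the genuine content is the injectivity of $\phi$, where it is essential that Conrad--de Jong approximation delivers the full graded isomorphism $\Gr_{\fm}(R) \cong \Gr_{\fm_u}(A)$ and not merely the truncated isomorphism $R/\fm^{N+1} \cong A/\fm_u^{N+1}$. This is exactly the step that would fail if one tried to run the argument using Artin approximation alone.
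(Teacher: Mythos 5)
Your proposal is correct and follows essentially the same route as the paper's proof: apply Conrad--de Jong approximation with $N=1$, climb the tower $\{A/\fm_u^{n+1}\}$ by inductively invoking formal versality to build the map $R \to \hat A_{\fm_u}$, deduce surjectivity from Lemma~\ref{L:surjective}, and deduce injectivity by comparing graded pieces via the isomorphism $\Gr_{\fm}(R)\cong\Gr_{\fm_u}(A)$ supplied by Theorem~\ref{T:conrad-dejong}(5). Your write-up of the injectivity step is a bit more explicit (spelling out the surjection of associated graded rings and the Krull-intersection argument) than the paper's, but the underlying reasoning is identical.
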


\begin{remark} This was first proven in \cite{artin-algebraization}.
\end{remark}

\begin{remark} \label{R:proof-algebraization-special-case}
In the case that $R$ is known to be the completion of a finitely generated $k$-algebra,
 this theorem can be viewed as an easy consequence of Artin approximation.
Indeed, one applies Artin approximation with $N=1$ and then uses the formal versality condition to obtain compatible ring homomorphisms $R \to A / \fm_u^{n+1}$ and therefore a ring homomorphism 
$R \to \hat{A}_{\fm_u}$ which is an isomorphism modulo $\fm^2$. As $R$ and $\hat{A}_{\fm_u}$ are abstractly isomorphic, the homomorphism $R \to \hat{A}_{\fm_u}$ is an isomorphism (see Footnote \ref{F:local}) and the statement follows.  We leave the details to the reader but remark that this argument is very analogous to the proof of Artin algebraization below.  For the general case, since we don't know $R$ is the completion of a finitely generated $k$-algebra, we apply Conrad--de Jong approximation instead of Artin approximation.
\end{remark}

\begin{proof}[Proof of Artin algebraization (Theorem \ref{T:algebraization})]
Applying Conrad--de Jong approximation with $N=1$, we obtain an affine scheme $\Spec A$ of finite type over $k$ with a $k$-point $u \in \Spec A$, an object $\xi_A$ of $\cX$ over $\Spec A$, an isomorphism $\alpha_2 \co \Spec A / \fm_u^{2} \to \Spec R / \fm^{2}$, an isomorphism  $\iota_2 \co \hat{\xi}|_{\Spec(R/ \fm^{2})} \to \xi_A|_{\Spec(A/\fm_u^{2})}$, and an isomorphism $\Gr_{\fm}(R) \cong \Gr_{\fm_u}(A)$ of graded $k$-algebras.

We claim that $\alpha_2$ and $\iota_2$ can be extended to a compatible family of morphisms $\alpha_{n} \co \Spec A / \fm_u^{n+1} \to \Spec R$  and isomorphisms $\iota_n \co \hat{\xi}|_{\Spec(A/ \fm_{u}^{n+1})} \to$ \linebreak $\xi_A|_{\Spec(A/\fm_u^{n+1})}$.  By induction, suppose we are given $\alpha_{n}$ and $\iota_n$. Since $\hat{\xi}$ is formally versal, there is a lift $\alpha_{n+1} \co \Spec A/\fm^{n+1} \to \Spec R$ filling in the commutative diagram
$$\xymatrix{
\Spec A/\fm_u^{n} \ar[r]^{\alpha_n} \ar[d]		& \Spec R \ar[d]^{\hat{\xi}} \\
\Spec A/\fm_u^{n+1} \ar[r]_{\qquad \xi_A} \ar@{-->}[ur]_{\alpha_{n+1}}	& \cX,
}$$
which establishes the claim.

By taking the limit, we have a homomorphism $\hat{\alpha} \co R \to \hat{A}_{\fm_u}$ which is surjective as $\hat{\alpha}$ restricts to the given isomorphism $R / \fm^2 \to A / \fm_u^2$ (where we have used Lemma \ref{L:surjective}). On the other hand, for each $n$, we know that the $k$-vector spaces $\fm^N  /  \fm^{N+1}$ and  
$\fm_u^N /  \fm_u^{N+1} $
have the same dimension.  This implies that $\hat{\alpha}$ is an isomorphism which finishes the proof.
\end{proof}

In the above arguments, this fact was used several times.

\begin{lemma}  \label{L:surjective}
Let $(A, \fm_A)$ and $(B, \fm_B)$ be local noetherian complete rings.  If $A \to B$ is a local homomorphism such that $A / \fm_A^2 \to B / \fm_B^2$ is surjective, then $A \to B$ is surjective.
\end{lemma}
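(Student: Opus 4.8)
Denote the given local homomorphism by $\phi \co A \to B$. The idea is the classical one: first control $\phi$ on the associated graded rings, and then lift an arbitrary element of $B$ along $\phi$ as a convergent power series, using that $A$ is complete and $B$ is separated.

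The first step is to note that $\phi$ induces an isomorphism $A/\fm_A \iso B/\fm_B$ (any ring map out of a field is injective, and it is surjective by the degree-zero part of the hypothesis), so in particular $B = \phi(A) + \fm_B$. The second step extracts the degree-one information: the hypothesis says $\fm_A/\fm_A^2 \to \fm_B/\fm_B^2$ is surjective, i.e. $\fm_B = \phi(\fm_A)B + \fm_B^2$, and since $B$ is noetherian local, Nakayama's lemma applied to the finite $B$-module $\fm_B/\phi(\fm_A)B$ gives $\fm_B = \phi(\fm_A)B$. Consequently $\fm_B^n = \phi(\fm_A^n)B$ for all $n \ge 1$, and, writing $B = \phi(A)+\fm_B$ and using $\phi(x)\phi(a)=\phi(xa)$ together with $\phi(\fm_A^n)\subseteq\fm_B^n$, one obtains the key containments
\[
\fm_B^n \;=\; \phi(\fm_A^n)\,B \;\subseteq\; \phi(\fm_A^n) + \fm_B^{n+1}, \qquad n \ge 1,
\]
where in the last term $\phi(\fm_A^n)$ denotes the additive image of $\fm_A^n$. (These containments are really all that is needed; they can also be read off directly from the hypothesis and multiplicativity, without invoking Nakayama.)

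The final step is the lifting argument. Given $b \in B$, I would first use $B = \phi(A) + \fm_B$ to write $b = \phi(a_0) + r_1$ with $a_0 \in A$ and $r_1 \in \fm_B$; then, applying the displayed containment repeatedly, peel off $r_n = \phi(a_n) + r_{n+1}$ with $a_n \in \fm_A^n$ and $r_{n+1} \in \fm_B^{n+1}$, so that $b - \phi(a_0 + a_1 + \cdots + a_n) = r_{n+1} \in \fm_B^{n+1}$ for every $n$. The partial sums form a Cauchy sequence for the $\fm_A$-adic topology, hence converge to some $a \in A$ because $A$ is complete; since $\phi$ is continuous ($\phi(\fm_A^n) \subseteq \fm_B^n$) and $\bigcap_n \fm_B^n = 0$ (Krull's intersection theorem, or simply that $B$ is separated), it follows that $\phi(a) = b$. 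Thus $\phi$ is surjective.

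\textbf{Expected main obstacle.} There is no genuine difficulty in this lemma; the only point requiring care is that knowing $B = \phi(A) + \fm_B^n$ for each individual $n$ does \emph{not} by itself imply surjectivity — one must arrange the successive corrections $a_n$ to lie in $\fm_A^n$ so that the series $\sum_n a_n$ converges, which is precisely where completeness of $A$ enters.
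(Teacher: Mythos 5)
Your proof is correct. The paper's own proof is much shorter: it simply cites a version of Nakayama's lemma for complete local rings (a not-necessarily-finitely-generated $A$-module $M$ with $\bigcap_k \fm_A^k M = 0$ is generated by any lift of generators of $M/\fm_A M$; see \cite[Ex.~7.2]{eisenbud}), applying it to $B$ viewed as an $A$-module and first using ordinary Nakayama on the finite $B$-module $\fm_B/\phi(\fm_A)B$ to see that $\fm_A B = \fm_B$, so that $B/\fm_A B$ is generated by $1$. What you have written is essentially the \emph{proof} of that "complete Nakayama" black box, specialized to this situation: the successive-approximation argument, with the corrections $a_n$ driven into $\fm_A^n$ so that the series converges in the complete ring $A$, and separatedness of $B$ used to identify the limit. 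So the route is genuinely different in presentation (self-contained versus citation of a standard lemma), but the underlying idea is the same. A small point worth making explicit in your step 2: surjectivity of $\fm_A/\fm_A^2 \to \fm_B/\fm_B^2$ is extracted from the hypothesis by the snake lemma (or a direct diagram chase) applied to the two short exact sequences
$$0 \to \fm/\fm^2 \to R/\fm^2 \to R/\fm \to 0$$
for $R = A, B$, using that the rightmost vertical arrow is an isomorphism of fields; and when passing from $\phi(a) \in \fm_B$ to $a \in \fm_A$, you are implicitly using that same isomorphism of residue fields. Neither of these is a gap, only a detail one should record. Your parenthetical observation that the key containment $\fm_B^n \subseteq \phi(\fm_A^n) + \fm_B^{n+1}$ can be obtained by induction on $n$ directly from multiplicativity, bypassing Nakayama entirely, is also correct and gives an even more elementary route than the paper's.
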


\begin{proof}
This follows from the following version of Nakayama's lemma for complete local rings $(A,\fm)$:  if $M$ is a (not-necessarily finitely generated) $A$-module such that $\bigcap_k \fm^k M = 0$ and $m_1, \ldots, m_n \in F$ generate $M/\fm M$, then $m_1, \ldots, m_n$ also generate $M$ (see \cite[Exercise 7.2]{eisenbud}).
\end{proof}

\subsection{Algebraic stacks}

We now quickly introduce the notion of {\it stacks} and {\it algebraic stacks}.

We say that $\{S_i \to S\}$ is an {\it \'etale covering} if each $S_i \to S$ is \'etale and $\coprod S_i \to S$ is surjective.  To simplify the notation, we set $S_{ij} := S_i \times_S S_j$ and $S_{ijk} := S_i \times_S S_j \times_S S_k$.

\begin{definition}  A category $\cX$ fibered in groupoids over a scheme $S$ is a {\it stack over $S$} if 
 for any \'etale covering $\{T_i \to  T\}$ of an $S$-scheme $T$, we have:
\begin{enumerate}
\item (``morphisms glue") For objects $a,b$ in $\cX$ over $T$ and morphisms \linebreak $\phi_i\co a|_{T_i} \to b$ over $T_i \to T$ such that $\phi_i|_{T_{ij}} = \phi_j|_{T_{ij}}$, then there exists a unique morphism $\phi\co a \to b$ over the identity with $\phi|_{T_i} = \phi_i$.  Pictorially, we are requiring that a commutative diagram
$$
\begin{array}{c}
\xymatrix{
				& a|_{T_i} \ar[rd] \ar[rrd]^{\phi_i} \\
a|_{T_{ij}}\ar[ur] \ar[dr]	&			& a \ar@{-->}[r]^{\phi}	& b\\
				& a|_{T_j} \ar[ru] \ar[rru]_{\phi_j}
} \end{array}
 \text{ over }
 \begin{array}{c} \xymatrix{
				& T_i \ar[rd] \\
T_{ij} \ar[ur] \ar[dr]	&			& T\\
				& T_j \ar[ru]
} \end{array}
$$
can be completed in a unique way by an arrow $\phi\co a \to b$.
\item (``objects glue") For objects $a_i$ over $T_i$ with isomorphisms $\alpha_{ij}\co a_i|_{T_{ij}} \to a_j|_{T_{ij}}$ over $\id_{T_{ij}}$ satisfying the cocycle condition $\alpha_{ij} \circ \alpha_{jk} = \alpha_{ik}$ on $T_{ijk}$, then there exist a unique object $a$ over $T$ and isomorphisms $\phi_i\co a|_{T_i} \to a_i$ over $\id_{T_i}$ such that $\alpha_{ij} \circ \phi_i
|_{T_{ij}} = \phi_j|_{T_{ij}}$.  Pictorially, we are requiring that diagrams
$$
\begin{array}{c}
\xymatrix{
				& a_i \ar@{-->}[rd] \\
a_i|_{T_{ij}} \xrightarrow{\alpha_{ij}} a_j|_{T_{ij}}\ar[ur] \ar[dr]	&			& a \\
				& a_j \ar@{-->}[ru]
} \end{array}
 \text{ over }
 \begin{array}{c} \xymatrix{
				& T_i \ar[rd] \\
T_{ij} \ar[ur] \ar[dr]	&			& T\\
				& T_j \ar[ru]
} \end{array}
$$
where the $\alpha_{ij}$ satisfy the cocycle condition can be filled in with an object $a$ over $T$.
\end{enumerate}
\end{definition}

\begin{remark}
These gluing conditions are extremely natural and should already be familiar to you.  For instance, rather than the \'etale topology, consider the Zariski topology (i.e. only consider Zariski covers $S = \bigcup S_i$) and consider the category fibered in groupoids $\cX$ over $\Spec \ZZ$ parametrizing pairs $(T, \cF)$,  where $T$ is a scheme and $\cF$ is a quasi-coherent sheaf on $T$. A morphism $(T', \cF') \to (T, \cF)$ in $\cX$ is the data of a morphism $f \co T' \to T$ together with an isomorphism $\cF \iso f^* \cF'$.  Then the fact that $\cX$ is a stack (in the Zariski topology) is the basic fact that quasi-coherent sheaves and their morphisms can be uniquely glued.  In this case, $\cX$ is also a stack in the \'etale (or even fppf) topology.
\end{remark}

\begin{definition} Let $S$ be a scheme.
A stack $\cX$ over $S$ is {\it algebraic} if 
\begin{enumerate}
	\item the diagonal $\Delta \co \cX \to \cX \times_S \cX$ is representable, and
	\item there exist a scheme $U$ and a representable, smooth and surjective morphism $U \to \cX$.
\end{enumerate}
\end{definition}

\begin{remark} \label{R:isom}
The representability condition  in (1) means that for every scheme $T$ and pair of morphisms $\xi, \eta \co T \to \cX$, the fiber product $\cX \times_{\Delta, \cX \times_S \cX, (\xi, \eta)} T$ is an algebraic space.  This in turn translates to the condition that the functor
$$\Isom_T(\xi, \eta) \co \Sch/T \to \Sets, \qquad (T' \to T) \mapsto \Mor_{\cX(T')}(\xi|_{T'}, \eta|_{T'})$$
is representable by an algebraic space, where $ \Mor_{\cX(T')}(\xi|_{T'}, \eta|_{T'})$ denotes the set of morphisms (which are necessarily isomorphisms) in the fiber category $\cX(T')$ of pullbacks of $\xi$ and $\eta$ to $T'$.

In (2), the condition that $U \to \cX$ is representable, smooth and surjective means that for any morphism $T \to \cX$ where $T$ is a scheme, the fiber product $T \times_{\cX} U$ is an algebraic space and $T \times_{\cX} U \to T$ is smooth and surjective.  (In fact, condition (2) can be shown to imply condition (1).)

\end{remark}

\subsection{Artin's axioms} \label{S:artins-axioms}

A spectacular application of Artin's algebraization theorem is Artin's local criterion for algebraicity of stacks.  This is a foundational result in the theory of algebraic stacks and was proved by Artin 
in the very same paper \cite{artin-versal} where he introduced algebraic stacks.

We first state a conceptual version of Artin's axioms that can be proved easily using only Artin algebraization.

\begin{theorem}(Artin's axioms---first version) \label{T:artins-axioms1}
Let $\cX$ be a stack over $k$.  Then $\cX$ is an algebraic stack locally of finite type over $k$ if and only if the following conditions hold:
\begin{enumerate}
\item[(0)] (Limit preserving) The stack $X$ is limit preserving.
\item (Representability of the diagonal) The diagonal $\cX \to \cX \times \cX$ is representable.
\item (Existence of formal deformations)  For every $x \co \Spec k \to \cX$, there exist a complete local noetherian $k$-algebra $(R, \fm)$ and a compatible family of morphism $\xi_n \co \Spec R / \fm^{n+1} \to \cX$ with $x = \xi_0$ such that $\{\xi_n\}$ is formally versal (as defined in Remark \ref{R:formal-versality}).
\item (Effectivity) For every complete local noetherian $k$-algebra $(R, \fm)$, the natural functor
$$\cX(\Spec R) \to \ilim \cX(\Spec R/ \fm^n)$$
is an equivalence of categories.
\item (Openness of versality)  
For any morphism $\xi_U \co U \to \cX$ where $U$ is a scheme of finite type over $k$ and point $u \in U$ such that $\xi_U$ is formally versal at $u$ (i.e. the induced morphism $\Spec \hat{\oh}_{U,u} \to \cX$ is formally versal), then $\xi_U$ is formally versal in an open neighborhood of $u$.
\end{enumerate}
\end{theorem}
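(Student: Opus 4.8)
The plan is to prove the two implications separately; essentially all of the work lies in the direction ``(0)--(4) $\Rightarrow$ algebraic'', which is a direct application of Artin algebraization (Theorem \ref{T:algebraization}). For the easy direction, suppose $\cX$ is algebraic and locally of finite type over $k$ with a smooth surjective presentation $V \to \cX$, where $V$ is a scheme locally of finite type over $k$. Then (1) is part of the definition of algebraicity; (0) holds because $V$ and the diagonal of $\cX$ are locally of finite presentation over $k$, whence $\cX$ is limit preserving by a standard noetherian approximation argument; (2) holds by lifting a $k$-point $x$ of $\cX$ to a $k$-point $v$ of the smooth algebraic space $V \times_{\cX} \Spec k$ and taking $R = \hat{\oh}_{V,v}$, the induced formal family being formally versal since $V \to \cX$ is formally smooth; (3) is a form of Grothendieck's existence theorem and is known for algebraic stacks locally of finite type over $k$; and (4) holds because, for a chart $\xi_U \colon U \to \cX$ with $U$ of finite type over $k$, formal versality of $\xi_U$ at $u$ is equivalent to $\xi_U$ being smooth at $u$ (one direction is the infinitesimal lifting criterion, the other uses deformation theory of $\cX$ along a smooth presentation), and smoothness of a morphism locally of finite presentation is an open condition.

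For the main direction, assume (0)--(4) and fix $x \colon \Spec k \to \cX$. By (2) there is a complete local noetherian $k$-algebra $(R,\fm)$ carrying a formally versal compatible family $\{\xi_n\}$ with $\xi_0 = x$; by (3) this family is the restriction of an object $\hat{\xi}$ of $\cX$ over $\Spec R$, which is again formally versal (Remark \ref{R:formal-versality}). Since $\cX$ is limit preserving by (0), Artin algebraization (Theorem \ref{T:algebraization}) produces an affine scheme $\Spec A$ of finite type over $k$, a $k$-point $u$, an object $\xi_A$ of $\cX$ over $\Spec A$, an isomorphism $R \iso \hat{A}_{\fm_u}$, and compatible isomorphisms of the truncations of $\hat{\xi}$ and $\xi_A$; using (3) once more, $\xi_A|_{\Spec \hat{A}_{\fm_u}} \cong \hat{\xi}$, so $\xi_A$ is formally versal at $u$. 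By (4), $\xi_A$ is then formally versal on an open neighbourhood $U_x \ni u$ of $\Spec A$. Letting $x$ range over all $k$-points of $\cX$ and setting $U = \coprod_x U_x$, I obtain a morphism $P \colon U \to \cX$ with $U$ a scheme locally of finite type over $k$.

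It remains to verify that $P$ is representable, smooth and surjective; granting this, $\cX$ is algebraic (hypothesis (1) supplies condition (1) of the definition) and locally of finite type over $k$ because $U$ is. Representability is automatic from (1): for any scheme $T$ with $T \to \cX$, the fibre product $U \times_{\cX} T$ is identified, via the diagonal, with a fibre product of $U \times T$ and $\cX$ over $\cX \times \cX$, hence is an algebraic space. For surjectivity, once $P$ is known to be smooth it is an open map, and its image contains every $k$-point of $\cX$ by construction; since a nonempty algebraic stack locally of finite type over the algebraically closed field $k$ has a $k$-point (pull back a smooth presentation and apply the Nullstellensatz), the open image of $P$ must be all of $\cX$. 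The heart of the matter, and the step I expect to be the main obstacle, is smoothness of $P$.

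To prove smoothness I would argue as follows. Fix a scheme $T$ of finite type over $k$ with $T \to \cX$; then $W := U \times_{\cX} T$ is an algebraic space, locally of finite presentation over $T$ because $\cX$ is limit preserving, and the non-smooth locus of $W \to T$ is closed in $W$. As $W$ is locally of finite type over $k$, hence Jacobson with closed points exactly the $k$-points, it suffices to show $W \to T$ is smooth at every $k$-point $w$ of $W$, necessarily lying over a $k$-point of $T$. By the local criterion for smoothness this reduces to a lifting problem against square-zero extensions $\Spec B_0 \hookrightarrow \Spec B_1$ of artinian local $k$-algebras with residue field $k$, centred at $w$ and its image; unwinding $W = U \times_{\cX} T$, such a problem is precisely one of the form in Definition \ref{D:versal} for $\xi_A$ at the image $k$-point of $w$ in $U$, a point at which $\xi_A$ is formally versal, so the required lift exists and $W \to T$ is smooth. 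The delicate point to get right is exactly this last reduction: formal versality is produced by algebraization only at finite-type ($k$-rational) points, which is why condition (4) is indispensable for spreading versality out to an open substack, and the hypothesis that $k$ is algebraically closed --- so that closed points are $k$-points and detect closed subsets --- is what keeps the residue-field bookkeeping in the lifting criterion trivial.
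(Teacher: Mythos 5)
Your proof follows essentially the same route as the paper's: use conditions (2) and (3) to produce an effective, formally versal object over a complete local noetherian $k$-algebra, apply Artin algebraization (Theorem \ref{T:algebraization}), spread formal versality to an open neighborhood using condition (4), and then conclude smoothness of the resulting chart $U \to \cX$ from formal versality. The only difference is that the paper delegates the step ``formal versality on an open set implies smoothness'' to a footnote citing Grothendieck's formal lifting criterion \cite[IV.17.14.2]{ega}, whereas you spell out that reduction (via Jacobsonness and the local criterion for smoothness at $k$-points); otherwise the arguments coincide.
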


\begin{proof}
The ``only if" direction is fairly straightforward and left to the reader.  For the ``if" direction, 
we first remark that Condition (1) implies 
that any morphism $U \to \cX$ from a scheme $U$ is necessarily representable.

Let $x \co \Spec k \to \cX$ be a morphism.  We need to find a commutative diagram
$$\xymatrix{
\Spec k \ar[rd]^{x} \ar[r]^u						& U \ar[d] \\
	& \cX
}$$
where $U$ is a scheme and $U \to \cX$ is smooth.   Condition (2) and (3) guarantee that there exists a complete local noetherian $k$-algebra $(R,\fm)$ with $R/ \fm \cong k$ together with a commutative diagram
$$\xymatrix{
\Spec k \ar[r] \ar[rd]^x	& \Spec R \ar[d]^{\hat{\xi}} \\
				& \cX
}$$
such that $\hat{\xi}$ is formally versal.  
By Artin's algebraization theorem, we can find an affine scheme $U = \Spec A$ of finite type over $k$, a point $u \in U$, a morphism $\xi \co U \to \cX$, and an isomorphism $R \cong \hat{\oh}_{U,u}$ yielding a commutative diagram
$$\xymatrix{
\Spec k \ar@{^(->}[r]	\ar[rrd]_{x}		  &  \Spec R \ar[rd]^{\hat{\xi}} \ar[r] & U \ar[d]^{\xi}\\
													 & & \cX.
}$$
We know that $U \to \cX$ is formally versal at $u$ and condition (4) implies that it is formally versal in a neighborhood.  But this implies that $U \to \cX$ is smooth in a neighborhood of $u$.\footnote{This isn't a trivial implication.  The formal versality condition is only a condition on lifting local artinian $k$-algebras but a theorem of Grothendieck \cite[IV.17.14.2]{ega} implies that this is sufficient to guarantee smoothness.}
\end{proof}

\begin{remark} In practice, Condition (1) is often easy to verify directly.  Alternatively one could also apply the theorem to the diagonal $\cX \to \cX \times \cX$ (i.e. to the sheaves $\Isom_T(\xi, \eta)$ defined in Remark \ref{R:isom}).  
Condition (2) is often a consequence of Schlessinger and Rim's theorem on existence of formally versal deformations \cite{schlessinger}, \cite{rim}.  
Condition (3) is often a consequence of Grothendieck's existence theorem \cite[III.5.1.4]{ega}.  

In some simplified moduli problems, Condition (4) can be checked directly.  For instance, if for each point $x \co \Spec k \to \cX$, the formal deformation space $(R, \fm)$ (as in Condition (2)) is regular (or more generally normal, geometrically unibranch and free of embedded points), then Condition (4) is automatically satisfied; see \cite[Thm.~3.9] {artin-algebraization}.  In more general moduli problems, Condition (4) is often guaranteed as a consequence of a well-behaved deformation and obstruction theory.  This will be explained in the next section.
\end{remark}

\subsection{A more refined version of Artin's axioms}
We will now state a refinement of Theorem \ref{T:artins-axioms1} that is often easier to verify in practice.
In order to state the theorem succinctly, we need to introduce a bit of notation.  

Let $A$ be a finitely generated $k$-algebra  and let $M$ be a finite $A$-module.  Denote by $A[M]$ the ring $A \oplus M$ defined by $M^2 = 0$.  Let $\xi \co \Spec A \to  \cX$.  Denote by $\Def_\xi(M)$ the set of isomorphism classes of diagrams 
$$\xymatrix{
\Spec A \ar[r]^{\xi} \ar@{_(->}[d]		& \cX  \\
\Spec A[M] \ar@{-->}[ur]^{\eta}.
}$$
Let $\Aut_{\xi}(M)$ be the group of automorphisms of the trivial deformation \linebreak $\Spec A[M] \to \Spec A \to \cX$ which restrict to the identity automorphism of $\xi$.

We remark that $\Aut_{\xi}(M)$ naturally has the structure of an $A$-module.\footnote{Indeed, for $a \in A$, the $A$-algebra homomorphism $A[M] \to A[M], a_0+m_0 \mapsto a_0+am_0$ induces a morphism $f_a \co \Spec A[M] \to \Spec A[M]$ over $\Spec A$ which in turn induces a group homomorphism $\Aut_{\xi}(M) \to \Aut_{\xi}(M)$ obtained by pulling back automorphisms along $f_a$.}    Condition (1a) below implies that $\Def_{\xi}(M)$ is naturally an $A$-module.\footnote{This may seem surprising since in condition (1a), the $k$-algebras $A,A',B$ are only assumed to be artinian.  However, as shown in \cite{hall-rydh-versal}, this is strong enough to imply that for any finitely generated $k$-algebra $A$ and finite $A$-module $M$, the natural map
$$\cX(\Spec(A[M \oplus M])) \to \cX(\Spec A[M]) \times_{\cX(\Spec A)} \cX(\Spec A[M])$$
is an equivalence of categories.  Therefore, addition $M \oplus M \to M$ induces an $A$-algebra homomorphism $A[M \oplus M] \to A[M]$ and thus a functor $\cX(\Spec A[M]) \times_{\cX(\Spec A)} \cX(\Spec A[M]) \cong \cX(\Spec A[M \oplus M]) \to \cX(\Spec A[M])$ which defines addition on $\Def_{\xi}(M)$.  Multiplication by an element $a \in A$ induces an $A$-algebra homomorphism $A[M] \to A[M]$ and therefore a functor $\cX(\Spec A[M]) \to \cX(\Spec A[M])$ which defines multiplication by $a \in A$.}

Note that $\Aut_{\xi}(k)$ is the group of infinitesimal automorphism of $\xi$, and $\Def_{\xi}(k)$ is the set of isomorphism classes of deformations over the dual numbers $\Spec k[\epsilon]$  which can be  thought of as the Zariski tangent space.

\begin{theorem}[Artin's axioms---refined version]  \label{T:artin-axioms2}
Let $\cX$ be a stack over $k$.  Then $\cX$ is an algebraic stack locally of finite type over $k$ if and only if the following conditions hold:
\begin{enumerate}
\item[(0)] (Limit preserving) The stack $\cX$ is limit preserving.
\item (Existence of formal deformations)  
\begin{enumerate}
\item (Homogeneity) For every diagram
$$\xymatrix{
A \ar[r] \ar[d]		& A' \ar[d]\\
B \ar[r]			& A' \times_A B
}$$
of finitely generated local artinian $k$-algebras where $A \to A'$ is surjective, the natural functor
$$\cX(\Spec(A' \times_A B)) \to \cX(\Spec A') \times_{\cX(\Spec A)} \cX(\Spec B)$$
is an equivalence of categories.  
\item (Finiteness of tangent spaces)  For every object $\xi \co \Spec k \to \cX$, 
$\Aut_{\xi}(k)$ and $\Def_{\xi}(k)$ are finite dimensional $k$-vector spaces.
\end{enumerate}
\item (Effectivity) For every complete local noetherian $k$-algebra $(R, \fm)$, the natural functor
$$\cX(\Spec R) \to \ilim \cX(\Spec R/ \fm^n)$$
is an equivalence of categories.
\item (Openness of versality)  
\begin{enumerate}
\item (Coherent deformation theory)  For every finitely generated $k$-algebra $A$, finite $A$-module $M$ and object $\xi \co \Spec A \to \cX$, the $A$-modules $\Aut_{\xi}(M)$ and $\Def_{\xi}(M)$ are finite. Thus there are $A$-linear functors
$$\begin{aligned}
\Aut_{\xi} \co \FiniteA & \to  \FiniteA  \\
\Def_{\xi} \co \FiniteA & \to  \FiniteA .
\end{aligned}$$
\item (Existence of an obstruction theory) For every finitely generated $k$-algebra $A$ and object $\xi \co \Spec A \to \cX$, there exists an $A$-linear functor
$$\Ob_{\xi} \co \FiniteA  \to  \FiniteA. $$
Moreover, for each surjection $A' \to A$ with squarezero kernel $I$, there exists an element $\o_{\xi}(A') \in \Ob_{\xi}(I)$ such that there is an extension
$$\xymatrix{
\Spec A \ar[r]^{\xi} \ar@{_(->}[d]		& \cX  \\
\Spec A' \ar@{-->}[ur]
}$$
if and only if $\o_{\xi}(A') = 0$.
\item (Constructibility)  For every finitely generated $k$-algebra $A$, finite $A$-module $M$ supported on $A_{\red}$ and object $\xi \co \Spec A \to \cX$, there exists a dense open subset $U \subset \Spec A$ such that for every $k$-point $u \in U$, the natural maps
$$\begin{aligned}
\Aut_{\xi}(M) \otimes_A k(u) & \to \Aut_{\xi|_u}(M \otimes_A k(u)) \\
\Def_{\xi}(M) \otimes_A k(u) & \to \Def_{\xi|_u}(M \otimes_A k(u)) \\
\Ob_{\xi}(M) \otimes_A k(u) & \to \Ob_{\xi}(M \otimes_A k(u)) \\
\end{aligned}$$
are isomorphisms.
\end{enumerate}
\end{enumerate}
\end{theorem}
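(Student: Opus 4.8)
The plan is to deduce the refined criterion from the first version (Theorem~\ref{T:artins-axioms1}). Conditions (0) and (2) (Effectivity) are word-for-word the same in the two statements, so it suffices to show that the refined hypotheses (0), (1), (2), (3) imply the remaining three conditions of Theorem~\ref{T:artins-axioms1}: representability of the diagonal, existence of formal deformations, and openness of versality. As in Theorem~\ref{T:artins-axioms1}, the ``only if'' direction follows from the standard deformation theory of algebraic stacks locally of finite type over $k$ --- each of (1)(a), (1)(b), (3)(a)--(c) is a known property in that case --- and I would leave it to the reader.

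\textbf{Existence of formal deformations.} Given $x\co \Spec k \to \cX$, Homogeneity (1)(a) is precisely the Rim--Schlessinger gluing condition for the fibre categories, and (1)(b) says $\Aut_x(k)$ and $\Def_x(k)$ are finite dimensional. I would invoke the Schlessinger--Rim criterion \cite{schlessinger}, \cite{rim}: these two facts produce a complete local noetherian $k$-algebra $(R, \fm)$ with $R/\fm = k$ (a quotient of $k[[t_1, \dots, t_n]]$ with $n = \dim_k \Def_x(k)$) together with a formally versal compatible family $\{\xi_n\co \Spec R/\fm^{n+1} \to \cX\}$ with $\xi_0 = x$, which is exactly Condition (2) of Theorem~\ref{T:artins-axioms1}. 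Note that (1)(b) is what guarantees $R$ is noetherian.

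\textbf{Representability of the diagonal.} One must show that for every scheme $T$ and objects $\xi, \eta \in \cX(T)$ the sheaf $\Isom_T(\xi, \eta)$ is an algebraic space; by a limit argument this reduces to $T$ of finite type over $k$. This sheaf satisfies the hypotheses of Theorem~\ref{T:artins-axioms1}: it is limit preserving by (0); its diagonal is a monomorphism, which one checks is representable by reducing to the case of an $\Isom$-sheaf relative to an honest scheme; it admits formal deformations (Homogeneity linearizes its deformation theory, and its tangent space is a subquotient of the finite spaces of (1)(b)) and is effective because $\cX$ is; and it satisfies openness of versality by the argument below. Hence Theorem~\ref{T:artins-axioms1} applies and $\Isom_T(\xi,\eta)$ is an algebraic space, so $\Delta\co \cX \to \cX\times\cX$ is representable.

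\textbf{Openness of versality.} This is the heart of the matter and the step I expect to be the main obstacle. Let $\xi_U\co U = \Spec A \to \cX$ with $A$ finite type over $k$, and let $u\in U$ be a point at which $\xi_U$ is formally versal. Following Artin, the idea is to express formal versality at a point as a \emph{coherent} condition: using Homogeneity (1)(a) one assembles, out of the $A$-linear functors $\Aut_{\xi_U}$, $\Def_{\xi_U}$, $\Ob_{\xi_U}$ of (3)(a)--(b) and the module $\Omega_{A/k}$ (which governs deformations of $U$ itself), a complex of coherent $\oh_U$-modules with the property that $\xi_U$ is formally versal at a finite-type point $v$ if and only if a certain map of coherent sheaves is surjective at $v$ and an associated relative obstruction vanishes at $v$ --- an infinitesimal computation of Schlessinger--Rim type. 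Constructibility (3)(c), applied to $A$ and, by Noetherian induction, to all of its closed subschemes, shows that the formation of $\Aut_{\xi_U}$, $\Def_{\xi_U}$, $\Ob_{\xi_U}$ commutes with reduction to residue fields on a dense open of each closed subscheme; from this one deduces that the locus $V \subset U$ of points at which $\xi_U$ is formally versal is constructible. One then checks that the complement of $V$ is stable under specialization --- versality propagates to generizations once the obstruction theory base-changes well --- so $V$, being constructible and stable under generization, is open. Since $u \in V$, this is Condition (4) of Theorem~\ref{T:artins-axioms1}. The genuinely hard inputs are (i) writing down the coherent complex that computes versality and proving the infinitesimal equivalence, and (ii) the constructibility-plus-generization argument; this is where the finiteness conditions (3)(a) and the constructibility condition (3)(c) do all their work, and it is essentially the content of \cite{artin-versal} (see also \cite{hall-rydh-versal}).
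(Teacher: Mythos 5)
Your proposal follows the same skeleton as the paper's discussion: the paper explicitly declines to give a complete proof of this theorem and instead only sketches how the refined hypotheses feed into Theorem~\ref{T:artins-axioms1} --- Schlessinger--Rim from (1)(a)--(b), Grothendieck existence for effectivity, and a bootstrap to handle the diagonal followed by the hard openness-of-versality step from (3)(a)--(c), deferring details to \cite{artin-versal} and \cite{hall-rydh-versal}. Your write-up mirrors this exactly, with a bit more texture on the constructibility-and-generization mechanism.

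One small inaccuracy worth tightening in your representability-of-the-diagonal paragraph. The paper's point is that the bootstrap is not simply ``apply the same argument to $\Isom_T(\xi,\eta)$ with the same functors'': when you verify openness of versality for the Isom sheaf, the $A$-module $\Aut_{\xi}$ must play the role that $\Def$ plays for $\cX$, and $\Def_{\xi}$ must play the role that $\Ob$ plays for $\cX$ --- that is, the automorphism module is the tangent/deformation module of the Isom sheaf and the deformation module is its obstruction module. You instead say the Isom sheaf ``satisfies openness of versality by the argument below'' and that ``its tangent space is a subquotient of the finite spaces of (1)(b),'' which is too vague: it does not identify which of the three functors in (3)(a)--(b) controls which infinitesimal structure of $\Isom_T(\xi,\eta)$, and it gives no reason why (3)(c) for $\cX$ should yield the constructibility needed for the Isom sheaf. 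This is precisely the shift of indices the paper flags (``using automorphisms and deformations rather than deformations and obstructions''), and making it explicit is what lets you honestly claim the three hypotheses of Theorem~\ref{T:artins-axioms1} for the Isom sheaves.

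Beyond that, the heavy lifting --- expressing formal versality as the vanishing of a coherent module built from $\Aut_{\xi}$, $\Def_{\xi}$, $\Ob_{\xi}$, and the constructibility-plus-generization argument --- is deferred to the references in both the paper and your write-up, so there is no genuine gap relative to what the paper actually proves.
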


\begin{remark}
An analogous statement is true after replacing $\Spec k$ with an arbitrary excellent scheme after suitably modifying Conditions 1--3.  
\end{remark}

Artin proved a version of the above theorem in \cite{artin-versal}.
We will not give a complete proof of this statement here and restrict ourselves to only making a few comments.  First, one reduces to the case that $\cX \to \cX \times \cX$ is representable by bootstrapping the below argument using automorphisms and deformations (rather than deformations and obstructions) to conclude that the isomorphism sheaves (as defined in Remark \ref{R:isom}) are representable.

Conditions 1(a)--(b) above allow us to apply Schlessinger and Rim's theorem \cite{schlessinger}, \cite{rim} which guarantees the existence of formal deformation, i.e. Condition 1 in Theorem \ref{T:artins-axioms1} holds.  This condition is often fairly easy to check in practice for moduli problems.  

In practice, Condition 2 is often established as a direct consequence of Grothendieck's Existence Theorem.

Conditions 3(a)--(c) can be shown to imply that formal versality is an open condition, i.e. Condition 3 in Theorem  \ref{T:artins-axioms1} holds.  To this end, it is necessary to show that if $\xi \co U \to \cX$ is a morphism which is formally versal at $u \in U$, then for points $v \in U$ in a sufficiently small neighborhood of $u$, any commutative diagram of the form 
\begin{equation} \label{D:lifting}
\begin{split}
\xymatrix{
\Spec B \ar[r]  \ar@{^(->}[d]	&  U \ar[d]^{\xi} \\
\Spec B' \ar[r] \ar@{-->}[ur]			& \cX,
} \end{split}
\end{equation}
where $B' \to B$ is a surjection of artinian $k$-algebras and $\Spec B \to U$ maps to $v$, has an extension.  This is a deformation problem.  Vaguely speaking, conditions 3(a)--(c) imply that the deformation theory of $U \to \cX$ is controlled in some sense by a coherent module, and formal versality at $u$ implies that this coherent module vanishes at $u$ and thus in an open neighborhood.   

Condition 3 certainly takes the most time and space to formulate. Moreover the most difficult part of the proof of Theorem \ref{T:artin-axioms2} is to show that Condition 3 implies openness of versality.  Nevertheless it is quite easy to establish Condition 3 in practice.  For many moduli problems, one shows that if $M$ is a finite $A$-module, then the $A$-modules $\Aut_{\xi}(M)$, $\Def_{\xi}(M)$, and $\Ob_{\xi}(M)$ are naturally identified with certain cohomology modules in which case Condition 3(c) can be seen to follow from cohomology and base change.  For example, if $\cX$ is the moduli space of smooth curves as in Example \ref{E:curves} and $\xi$ corresponds to a curve $C \to \Spec A$, then $\Aut_{\xi}(M) = \H^0(C, T_{C} \otimes M)$, $\Def_{\xi}(M) = \H^1(C, T_{C} \otimes M)$ and $\Ob_{\xi}(M) = \H^2(C, T_{C} \otimes M) = 0$.  Here the constructibility condition in 3(c) follows directly from cohomology and base change applied at the generic points of $\Spec A$.

We recommend \cite{hall-versal} for a conceptual proof of Artin's criterion.
There is some flexibility in how one precisely formulates Conditions 1 and 3.  We recommend \cite{hall-rydh-versal} for a technical account of various formulations of Artin's axioms and in particular for a complete proof of the formulation given in Theorem \ref{T:artin-axioms2}.

\newpage

\section*{Lecture 3: The geometry of quotient stacks}
\refstepcounter{section}

In this lecture, we discuss a particularly important example of an algebraic stack, namely the {\it quotient stack} $[X/G]$, which arises from the action of an algebraic group $G$ on a scheme $X$. 
We will emphasize that the geometry of a quotient stack $[X/G]$ is nothing other than the $G$-equivariant geometry of $X$.  In this lecture, we discuss when a general algebraic stack is a quotient stack and then turn to the main theorem of these lectures which gives conditions for an algebraic stack to be \'etale locally a quotient stack.

\subsection{Quotient stacks} \label{S:quotient-stacks}

\begin{definition}  Let $G$ be a smooth affine group scheme over $k$ acting on a scheme $X$ over $k$.  We define the {\it quotient stack of $X$ by $G$}, denoted by $[X/G]$, to be the category of objects $(T, P \xrightarrow{\pi} T, P \xrightarrow{g} X)$ where $T$ is a scheme, $P \to T$ is a principal $G$-bundle and $P \to X$ is a $G$-equivariant morphism.  A morphism $(T, P \xrightarrow{\pi} T, P \xrightarrow{g} X) \to (T', P' \xrightarrow{\pi'} T', P' \xrightarrow{g'} X)$ in this category consists of a commutative diagram 
$$\xymatrix{
P \ar[d]^{\pi} \ar[r]  \ar@/^4ex/[rr]^{g}	& P' \ar[d]^{\pi'} 	\ar[r]^{g'}		& X \\
T \ar[r] \ar@{}[ur]|\square			& T'
}$$
where the square is cartesian.
\end{definition}

\begin{remark} With the above hypotheses, $[X/G]$ is an algebraic stack.  Here Axiom (2) of the definition of an algebraic stack is satisfied by the projection $X \to [X/G]$.  The morphism $X \to [X/G]$ corresponds via Yoneda's lemma to the object of $[X/G]$ over $X$ defined by the trivial $G$-bundle $G \times X \to X$ together with the $G$-equivariant morphism $\sigma \co G \times X \to X$ (corresponding to the action of $G$ on $X$).  The morphism $X \to [X/G]$ is a $G$-torsor.
\end{remark}

\begin{remark}
In fact, in characteristic $p$, if $G$ is not smooth, it can still be shown that $[X/G]$ is an algebraic stack.
\end{remark}

\begin{definition}
We say that an algebraic stack $\cX$ is a {\it global quotient stack} if
$\cX \cong [U / \GL_n]$ where $U$ is an algebraic space with an action of $\GL_n$.
\end{definition}

\begin{remark}   If $G$ is an affine group scheme of finite type over $k$ and $U$ is an algebraic space over $k$ with an action of $G$, then $[U/G]$ is a global quotient stack.  To see this, choose a faithful representation $G \subset \GL_n$.  Then $[U/G] \cong [ (U \times^{G} \GL_n) / \GL_n]$ where $U \times^{G} \GL_n = (U \times \GL_n) / G$ (and here $G$ acts diagonally on $U \times \GL_n$).
\end{remark}

Quotient stacks provide very important examples of algebraic stacks as their geometry is particularly well understood.  In fact, the geometry of a quotient stack $\cX = [X/G]$ is nothing other than the $G$-equivariant geometry of $X$.  To justify this philosophy, we provide a dictionary between geometric concepts of $[X/G]$ and $G$-equivariant geometric concepts of $X$.

\medskip

\begin{center}
{\renewcommand{\arraystretch}{1.5}
\renewcommand{\tabcolsep}{0.2cm}
\begin{tabular}{p{7cm} | p{7cm}} 
\hline
\begin{center} Geometry of $\cX = [X/G]$ \end{center} & \begin{center} $G$-equivariant geometry of $X$ \end{center}\\
\hline
a point $x \co \Spec k \to \cX$	& a $G$-orbit $G x \subset X$ \\
\hline
the automorphism group $\Aut_{\cX(k)}(x)$	& the stabilizer $G_x$ \\
\hline
a function $f \in \Gamma(\cX, \oh_{\cX})$	& a $G$-invariant function $f \in \Gamma(X, \oh_X)^G$\\
\hline
a morphism $\cX \to Y$ where $Y$ is a scheme (or an algebraic space)	& a $G$-invariant morphism $X \to Y$ \\
\hline
a line bundle on $\cX$ 	& a line bundle on $X$ with a $G$-action \\
\hline
a coherent $\oh_{\cX}$-module	& a coherent $\oh_X$-module with a $G$-action \\
\hline
properties of the diagonal $\cX \to \cX \times \cX$		& properties of the group action $G \times X \to X \times X$ \\
\hline
the tangent space $T_{\cX,x}$	& the normal space to the orbit $T_{X,x} / T_{G x, x}$ \\
\end{tabular}}
\end{center}

Above, we used the notion of the tangent space of a stack at a point.  Since this will be important later, let's define it precisely.

\begin{definition} \label{D:tangent-space}
If $\cX$ is an algebraic stack of finite type over $k$ and $x$ is a $k$-point, then the {\it tangent space of $\cX$ at $x$}, denoted by $T_{\cX, x}$, is defined as the set of isomorphism classes of extensions
$$\xymatrix{
\Spec k \ar@{^(->}[d] \ar[r]^x		& \cX \\
\Spec k[\epsilon]/\epsilon^2. \ar@{-->}[ru]
}$$
\end{definition}

\begin{remark}    
The equivalence $T_{\cX, x} \cong T_{X,x} / T_{Gx,x}$ only holds when the stabilizer $G_x$ is smooth.
\end{remark}

\begin{example}  Consider the action of the multiplicative group $\GG_m = \Spec k[t]_t$ on $\AA^n$ via multiplication.  Then $[\AA^n / \GG_m]$ is an algebraic stack.  The origin is the only closed $k$-point of $[\AA^n / \GG_m]$ as all other $k$-points have the origin in their closure.  As $[ (\AA^n \setminus 0 ) / \GG_m] = \PP^{n-1}$, there is an open substack $\PP^{n-1} \subset [\AA^n / \GG_m]$.
\end{example}

\begin{example}  Consider the action of $\GG_m$ on $\AA^2$ via $t \cdot (x,y) = (tx, t^{-1} y)$.  Then the closed $k$-points of $[\AA^2 / \GG_m]$ correspond to the origin together with the hyperbolas $\{xy=a\} \subset \AA^2$ (i.e. the $\GG_m$-orbit of $(a,1)$ for $a \neq 0$).  The two orbits $\GG_m (1,0)$ and $\GG_m (0,1)$ both have the origin in their closure.  Here the open substack $[(\AA^2 \setminus 0 ) / \GG_m] \subset [\AA^2 / \GG_m]$ is isomorphic to the non-separated affine line with a double origin.  The morphism $[\AA^2 / \GG_m] \to \AA^1$ given by $(x,y) \mapsto xy$ gives a bijective correspondence between closed $k$-points in $[\AA^2 / \GG_m]$ and $k$-points in $\AA^1$.  Note that the fiber over the origin consists of $3$ points corresponding to the orbits in the union of the $x$ and $y$-axes.
\end{example}

A central question is:

\begin{question}
When is an algebraic stack $\cX$ a global quotient?
\end{question}

This question is very difficult.  As we will see below, the question of whether $\cX$ is a global quotient is related to other global geometric properties of $\cX$.

\subsection{A summary of known results on quotient stacks}

We first make a basic observation.  In this discussion, we restrict ourselves to the case that $\cX$ is a quasi-separated algebraic stack of finite type over $k$.  If $\cX = [X/\GL_n]$ is a global quotient, then $X \to \cX$ is a $\GL_n$-torsor and one can construct $[X \times \AA^n / \GL_n]$ which is a vector bundle over $\cX$ with the property that  each stabilizer acts faithfully on the fiber.  Conversely, given a vector bundle $\cV \to \cX$ with this same property, then the frame bundle $\Frame(\cV)$ is an algebraic space and $\Frame(\cV) \to \cX$ is a $\GL_n$-torsor.  We conclude that
\medskip
\begin{center}
\begin{tabular} {p{4.3cm} c p{7cm}}
$\cX \cong [ \text{alg space} / \GL_n]$ (i.e. a global quotient) & $\iff$ & there exists a vector bundle $\cV$ on $\cX$ such that for every point $x$, the stabilizer $G_x$ acts faithfully on the fiber $\cV \tensor k(x)$.
\end{tabular}
\end{center}

In a similar spirit, a theorem due to Totaro \cite{totaro} (generalized by Gross \cite{gross} to the non-normal case) implies
\medskip
\begin{center}
\begin{tabular} {p{4.3cm} c p{7cm}}
$\cX \cong [ \text{quasi-affine} / \GL_n]$ & $\iff$ & $\cX$ satisfies the resolution property (i.e. every coherent $\oh_{\cX}$-module is surjected onto by some vector bundle) \\
\\
$\cX \cong [ \text{affine} / \GL_n]$ &  $\stackrel{\text{char(k) = 0}}{\iff}$
 & $\cX$ satisfies the resolution property, has affine diagonal, and $\H^i(\cX, \cF) = 0$ for every coherent $\oh_{\cX}$-module $\cF$ and $i > 0$.\\
\end{tabular}
\end{center}

The last case when $\cX \cong [\Spec A / \GL_n]$ provides quotient stacks of the simplest structure. In fact, it is useful to replace $\GL_n$ with any linearly reductive group scheme $G$.
Recall that an affine group scheme $G$ of finite type over $k$ is called {\it linearly reductive} if the functor
$$ G \mhyphen \Rep \to k\mhyphen\Vect, \qquad V \to V^G$$
is exact (or equivalently every $G$-representation is completely reducible).   
 If $G$ is a linearly reductive group (e.g. $G =\GL_n$ in characteristic 0) acting on an affine scheme $X = \Spec A$, then there is an affine GIT quotient
$$[\Spec A / G ] \to \Spec A \gitq G := \Spec A^{G}$$
whose geometry is very well understood.

We now mention two nice results from \cite{ehkv}.

\begin{proposition} \cite[Thm.~2.18]{ehkv}
If $\cX$ is a smooth and separated Deligne-Mumford stack of finite type over $k$ with generically trivial stabilizer, then $\cX$ is a global quotient.
\end{proposition}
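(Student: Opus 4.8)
The plan is to reduce the statement to the construction of a single vector bundle on $\cX$ whose fibres carry faithful stabilizer actions. Indeed, by the dictionary recorded just above, once we have a vector bundle $\cV$ on $\cX$ such that for every point $x$ the stabilizer $G_x$ acts faithfully on $\cV \otimes k(x)$, the frame bundle $\Frame(\cV) \to \cX$ is a $\GL_N$-torsor with $\Frame(\cV)$ an algebraic space, and hence $\cX \cong [\Frame(\cV)/\GL_N]$ is a global quotient. (If $\cX$ is not equidimensional one first passes to the finitely many connected components and equalizes ranks by adding trivial summands, which does not affect faithfulness.) So the whole problem is to produce such a $\cV$.

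The natural candidate is a bundle of jets. Since $\cX$ is smooth over $k$, for each $m \geq 0$ the sheaf $\mathcal{P}^m_{\cX}$ of $m$-th order principal parts is locally free, and its fibre at a $k$-point $x$ is the $G_x$-representation $\oh_{\cX,x}/\fm_x^{m+1}$. First I would check, one point at a time, that for every closed point $x$ the group $G_x$ acts faithfully on $\mathcal{P}^m_{\cX} \otimes k(x)$ as soon as $m$ is large enough (a priori depending on $x$). For this I would invoke the standard étale local structure theorem for Deligne--Mumford stacks to write an étale neighbourhood $[V/G_x] \to \cX$ with $V$ a connected affine scheme, necessarily smooth, and $v \in V$ a $G_x$-fixed point with stabilizer exactly $G_x$; pulling back $\mathcal{P}^m_{\cX}$ identifies the fibre in question with $\hat{\oh}_{V,v}/\fm_v^{m+1}$ as a $G_x$-module. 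The hypothesis that the generic stabilizer of $\cX$ is trivial forces the $G_x$-action on the connected smooth scheme $V$ to be generically free, hence faithful; a faithful action of a finite group on a connected reduced scheme that fixes the point $v$ is faithful on $\hat{\oh}_{V,v}$ (an element acting trivially on the completion acts trivially on $\oh_{V,v}$, hence on the function field, hence on $V$), and therefore, $G_x$ being finite, it is faithful already on $\hat{\oh}_{V,v}/\fm_v^{m+1}$ for $m \gg 0$.

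It remains to make the order $m$ uniform over all of $\cX$, and here I would work on the inertia stack $\pi \co I_{\cX} \to \cX$, which is finite since $\cX$ is a separated Deligne--Mumford stack of finite type. Let $e \co \cX \hookrightarrow I_{\cX}$ be the identity section and, for each $m$, let $T_m \subseteq I_{\cX}$ be the closed substack cut out by the vanishing of the morphism $\mathrm{id} - \rho_m$ of $\pi^{\ast}\mathcal{P}^m_{\cX}$, where $\rho_m$ denotes the action of the tautological automorphism. Then $e(\cX) \subseteq T_m$ and $T_1 \supseteq T_2 \supseteq \cdots$, so by the descending chain condition on closed substacks of the noetherian stack $I_{\cX}$ the chain stabilizes, say $T_m = T_{m_0}$ for all $m \geq m_0$. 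By the previous paragraph the underlying set $\bigcap_m |T_m|$ is exactly $|e(\cX)|$: an automorphism acting trivially on every jet acts trivially on the formal neighbourhood and is therefore trivial. Hence $|T_{m_0}| = |e(\cX)|$, which is precisely the statement that $G_x$ acts faithfully on $\mathcal{P}^{m_0}_{\cX} \otimes k(x)$ for every point $x$, and $\cV = \mathcal{P}^{m_0}_{\cX}$ completes the proof.

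I expect the main obstacle to lie in positive characteristic. In characteristic zero one can take $m = 1$, i.e. $\cV = T_{\cX}$: there a non-trivial element of a finite stabilizer fixing a smooth point acts non-trivially on the tangent space by linearization. In characteristic $p$ this fails --- for instance in $[\AA^2/(\ZZ/p)]$ with $\sigma(x,y) = (x, y + x^p)$ the generator acts trivially on the tangent space at the origin yet generates the stabilizer there --- so one genuinely must pass to high-order jets, and the real content of the argument is the uniformity of the order $m$, which is what the noetherianity of the inertia stack, together with the slice theorem, is used to guarantee.
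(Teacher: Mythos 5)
Your proof is correct, and it follows essentially the same strategy as the proof in \cite[Thm.~2.18]{ehkv} that the paper cites: reduce to producing a vector bundle on which stabilizers act faithfully, take that bundle to be $\mathcal{P}^{m}_{\cX}$ for large $m$, use the étale slice presentation $[V/G_x]$ to get pointwise faithfulness, and use the noetherianity of the finite inertia stack $I_{\cX}$ to uniformize the required jet order $m$. Your closing observation about the characteristic-$p$ failure of the tangent bundle (so that $m=1$ does not suffice) correctly identifies why high-order jets, rather than $T_{\cX}$, are necessary in general.
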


\begin{proposition} \cite[Thm.~3.6]{ehkv}
Let $X$ be a noetherian scheme and let $\cX \to X$ be a $\GG_m$-gerbe corresponding to $\alpha \in \H^2(X, \GG_m)$.  Then $\cX$ is a global quotient if and only if $\alpha$ is in the image of the Brauer map $\Br(X) \to \H^2(X, \GG_m)$.
\end{proposition}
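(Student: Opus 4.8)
The plan is to prove the two implications separately, in both directions passing between $\cX$ and $X$ via the standard dictionary attached to a $\GG_{m}$-gerbe $\pi\co\cX\to X$ of class $\alpha$: every automorphism group of $\cX$ is canonically $\GG_{m}$; an $\cO_{\cX}$-module on which this band acts with weight $w$ is a ``$w\alpha$-twisted sheaf'' on $X$ (for $w=1$, an $\alpha$-twisted sheaf); and $\pi_{*}$ carries a weight-$0$ module to an honest $\cO_{X}$-module. Concretely, a locally free $\cO_{\cX}$-module $\cW$ of positive rank $r$ on which the band acts with weight $1$ has $\mathcal{E}nd_{\cO_{\cX}}(\cW)$ of weight $0$, so $\pi_{*}\mathcal{E}nd_{\cO_{\cX}}(\cW)$ is an Azumaya $\cO_{X}$-algebra of degree $r$ whose Brauer class is $\alpha$; conversely an Azumaya algebra of degree $r$ with class $\alpha$ acquires a splitting module after pullback along $\pi$, i.e.\ a locally free weight-$1$ sheaf of rank $r$ on $\cX$. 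Hence $\alpha\in\im\!\big(\Br(X)\to\H^{2}(X,\GG_{m})\big)$ if and only if $\cX$ carries a vector bundle on which the band acts with weight $1$.

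For ``$\alpha\in\im\Br(X)\Rightarrow\cX$ is a global quotient'': choose an Azumaya algebra $\mathcal{A}$ of degree $n$ with $[\mathcal{A}]=\alpha$ and let $\cW$ be the resulting rank-$n$ weight-$1$ bundle on $\cX$. The stabilizer $\GG_{m}$ of every point of $\cX$ acts on $\cW\otimes k(x)$ through the character $t\mapsto t$, hence faithfully; so by the criterion recalled above ($\cX\cong[U/\GL_{n}]$ with $U$ an algebraic space $\iff$ $\cX$ carries a vector bundle on which every stabilizer acts faithfully on fibres), $\cX$ is a global quotient. Concretely, $\mathcal{A}$ corresponds to an \'etale $\mathrm{PGL}_{n}$-torsor $Y\to X$ with $Y$ a scheme, affine over $X$; letting $\GL_{n}$ act on $Y$ through $\GL_{n}\twoheadrightarrow\mathrm{PGL}_{n}$, an fppf-local computation ($[\mathrm{PGL}_{n}/\GL_{n}]\cong B\GG_{m}$) together with the boundary cocycle for $1\to\GG_{m}\to\GL_{n}\to\mathrm{PGL}_{n}\to1$ exhibits $[Y/\GL_{n}]\to[Y/\mathrm{PGL}_{n}]=X$ as a $\GG_{m}$-gerbe of class $\partial[Y]=[\mathcal{A}]=\alpha$; since $\GG_{m}$-gerbes over $X$ are classified by $\H^{2}(X,\GG_{m})$, $\cX\simeq[Y/\GL_{n}]$ is a global quotient.

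For the converse, suppose $\cX\cong[U/\GL_{n}]$ with $U$ an algebraic space. The same criterion furnishes a vector bundle $\cV$ on $\cX$ (e.g.\ the one associated to the tautological torsor $U\to\cX$ and the standard representation) on whose fibres every stabilizer acts faithfully. As $\cX$ is a $\GG_{m}$-gerbe, every stabilizer is $\GG_{m}$, which is diagonalizable, so $\cV$ decomposes into weight eigen-subsheaves $\cV=\bigoplus_{w\in\Z}\cV_{w}$, each locally free; replacing $X$ by a connected component (and at the end gluing the Azumaya algebras obtained on the various components, arranged to have a common degree) we may assume the ranks $r_{w}=\mathrm{rk}\,\cV_{w}$ are constant. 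Faithfulness of the $\GG_{m}$-action on $\cV\otimes k(x)$ says exactly that $\gcd\{\,w:r_{w}\ge 1\,\}=1$. For each such $w$, the weight-$w$ locally free sheaf $\cV_{w}$ produces (by the dictionary above, as $\mathcal{E}nd_{\cO_{\cX}}(\cV_{w})$ has weight $0$) an Azumaya $\cO_{X}$-algebra of degree $r_{w}$ with Brauer class $w\alpha$, so $w\alpha\in\im\Br(X)$; since this image is a subgroup of $\H^{2}(X,\GG_{m})$ and the relevant $w$ are collectively coprime, B\'ezout gives $\alpha\in\im\Br(X)$.

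The routine parts are the twisted-sheaf/descent bookkeeping and the boundary cocycle computation. The main obstacle is the converse direction: producing an actual Azumaya algebra on $X$ from the bare hypothesis that $\cX$ is a $\GL_{n}$-quotient. A quotient structure only guarantees a vector bundle on $\cX$ whose fibrewise weights are \emph{collectively} coprime, not a single weight-$1$ (i.e.\ $\alpha$-twisted) bundle, so one must exploit the group structure of $\im\Br(X)$ rather than exhibit the algebra directly. One must also handle the reduction to connected $X$ and the subsequent gluing of Azumaya algebras of a common degree, the fact that weight eigen-subsheaves of a locally free sheaf on a noetherian stack are again locally free, and the (harmless) \'etale-versus-fppf comparison for the smooth groups $\GG_{m}$ and $\mathrm{PGL}_{n}$.
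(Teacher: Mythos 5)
The paper does not prove this proposition --- it is quoted with a citation to \cite[Thm.~3.6]{ehkv} --- so there is no in-paper proof to compare against; your argument, however, correctly reconstructs the proof from that source. In particular, the key step in the converse direction (observing that a quotient presentation yields only a vector bundle with \emph{collectively} coprime fibrewise weights, and then exploiting that the image of $\Br(X)\to\H^{2}(X,\GG_m)$ is a subgroup together with B\'ezout) is exactly the EHKV argument, and the forward direction via the $\mathrm{PGL}_n$-torsor and the boundary map for $1\to\GG_m\to\GL_n\to\mathrm{PGL}_n\to1$ is likewise standard; the only superfluous bit is the ``arranged to have a common degree'' remark when regluing over connected components, since Azumaya algebras on a disjoint union need not have equal rank on each piece.
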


\begin{remark} This latter proposition can be used to construct algebraic stacks that are not global quotient stacks, including a non-separated Deligne-Mumford stack and a normal (but non-smooth) algebraic stack with affine diagonal.  See \cite[Examples 2.21 and 3.12]{ehkv}.
\end{remark}

The following two questions are completely open.

\begin{question}  Is every separated Deligne-Mumford stack a global quotient stack?
\end{question}

\begin{question}  Does every smooth algebraic stack with affine diagonal satisfy the resolution property?
\end{question}

The question of whether a given algebraic stack is a global quotient stack appears very difficult and is related to both global geometric properties (such as existence of vector bundles) as well as arithmetic questions (such as the surjectivity of the Brauer map).  Below we will attempt to address the simpler question:  when are algebraic stack \'etale locally quotient stacks?

\subsection{The local quotient structure of algebraic stacks}

Recall that $k$ denotes an algebraically closed field of any characteristic.  Also recall that we denote by $T_{\cX, x}$ the tangent space of a stack $\cX$ at $x$; see Definition \ref{D:tangent-space}.

\begin{theorem} \label{T:smooth}
Let $\cX$ be a quasi-separated algebraic stack, locally 
of finite type over $k$, with affine stabilizers.  Let $x \in \cX$ be a smooth $k$-point with smooth and linearly reductive stabilizer
  group $G_x$. Then there exist an affine and \'etale morphism $(U,u) \to (T_{\cX,x} \gitq G_x,0)$,  and a cartesian diagram
 $$\xymatrix{
 \bigl([T_{\cX,x}/G_x],0\bigr) \ar[d] & \bigl([\Spec A / G_x],w\bigr)\ar[r]^-f \ar[d] \ar[l]	& (\cX,x) \\
 (T_{\cX,x} \gitq G_x,0) & (U,u) \ar[l]	\ar@{}[ul]|\square				& 
}$$
such that  $f$ is \'etale and induces an isomorphism
    of stabilizer groups at $w$. 
\end{theorem}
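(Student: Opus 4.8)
The plan is to reduce Theorem~\ref{T:smooth} to an application of Artin algebraization (Theorem~\ref{T:algebraization}), by constructing the right formally versal object over a suitable complete local ring and then identifying the algebraization with an étale chart of the asserted form. First I would describe the ``model'' object explicitly: since $G_x$ is linearly reductive and $x$ is a smooth point, the tangent space $T_{\cX,x}$ is a $G_x$-representation, and $[T_{\cX,x}/G_x]$ is a smooth algebraic stack carrying a canonical point $0$ with stabilizer $G_x$. The completion of the GIT quotient $T_{\cX,x}\gitq G_x = \Spec(\Sym(T_{\cX,x}^{\vee}))^{G_x}$ at $0$ is a complete local noetherian $k$-algebra $R$, and $[\Spec R \times_{T_{\cX,x}\gitq G_x} T_{\cX,x}/G_x]$ should be viewed as the formal completion of $[T_{\cX,x}/G_x]$ along the closed point. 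The key claim is that the induced morphism from this formal object into $\cX$ — which exists because the tangent spaces agree by construction — is \emph{formally versal at $x$}. This is where smoothness of the point is used: for a smooth point, every infinitesimal deformation is unobstructed and is controlled by the tangent space, so a first-order isomorphism automatically propagates to all orders; more precisely, one checks the lifting criterion of Definition~\ref{D:versal} order by order, using that the obstruction spaces vanish (smoothness) and that the map on tangent spaces $T_{[T_{\cX,x}/G_x],0}\to T_{\cX,x}$ is an isomorphism.

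Next I would apply Artin algebraization to this formally versal object over $\Spec R$. This produces an affine scheme $\Spec A_0$ of finite type over $k$, a $k$-point $u_0$, a morphism $\xi\co \Spec A_0 \to \cX$, and an isomorphism $R \cong \hat{\oh}_{\Spec A_0, u_0}$ compatible with all the infinitesimal data. At this stage $\Spec A_0$ is a chart for $\cX$ étale-locally \emph{at the level of complete local rings}, but it has no group action yet. The plan is then to equip the situation with the $G_x$-action: because the formal object came from the quotient stack $[T_{\cX,x}/G_x]$ and $G_x$ is linearly reductive, one can carry the $G_x$-action through the algebraization — either by running an \emph{equivariant} version of Artin algebraization (as the introduction of the paper explicitly advertises: ``the proof of Theorem~\ref{T:general} will rely on an equivariant generalization of Artin's algebraization theorem''), or by using linear reductivity to average/descend the relevant structures and produce a $G_x$-scheme $\Spec A$ with a $k$-point $w$, a $G_x$-equivariant morphism $\Spec A \to \cX$, i.e.\ a morphism of stacks $f\co [\Spec A/G_x]\to \cX$, together with an étale morphism $U \to T_{\cX,x}\gitq G_x$ such that $[\Spec A/G_x] = U\times_{T_{\cX,x}\gitq G_x} [T_{\cX,x}/G_x]$.

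It then remains to verify that $f\co[\Spec A/G_x]\to\cX$ is étale at $w$ and induces an isomorphism on stabilizers at $w$. The stabilizer statement is immediate: the construction arranges that the stabilizer at $w$ is $G_x$ and that $f$ identifies it with the stabilizer of $x$, which by hypothesis is $G_x$. For étaleness, I would use the criterion recalled just before Corollary~\ref{C:etale-local}, suitably adapted to stacks: a morphism of finite-type stacks is étale at a point if and only if it induces an isomorphism on completed local rings (equivalently, on the associated versal deformation rings); here this is exactly the isomorphism $R\cong\hat{\oh}$ supplied by Artin algebraization, combined with formal versality on the source (which holds because $[T_{\cX,x}/G_x]$ is smooth, so its formal deformation space at $0$ is its own completion). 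One also needs étaleness in a whole neighborhood of $w$, not just at $w$; this follows by the standard ``spreading out'' of the étale locus, exactly as in the passage from the pointwise statement to the open statement in the proof of Theorem~\ref{T:approx} and in Corollary~\ref{C:etale-local}.

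The main obstacle I expect is making the equivariance rigorous throughout — that is, ensuring that Artin algebraization can be run $G_x$-equivariantly (or that the resulting chart can be endowed with a compatible $G_x$-action realizing the cartesian square over $T_{\cX,x}\gitq G_x$), since ordinary Artin algebraization produces only a scheme and no group action. This is precisely the point where linear reductivity of $G_x$ is essential: it is what allows one to linearize the action near the fixed point, to split the relevant exact sequences $G_x$-equivariantly, and to descend along the affine GIT quotient $\Spec A \to \Spec A\gitq G_x$. Everything else — constructing the versal model, checking formal versality via the vanishing of obstructions at a smooth point, and propagating étaleness to an open neighborhood — is a routine assembly of the tools developed in Lectures 1 and 2.
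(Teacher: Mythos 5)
Your proposal captures the correct model object $[T_{\cX,x}/G_x]$ and correctly senses that the main technical difficulty is preserving $G_x$-equivariance. However, there are two genuine gaps, and you also miss the trick that makes the smooth case cleaner than the general case.

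\textbf{Gap 1: you cannot apply ordinary Artin algebraization over $\Spec R$.} You say ``apply Artin algebraization to this formally versal object over $\Spec R$,'' but the formally versal object does not live over $\Spec R$. It lives over the quotient stack $\hat{\cT} = \Spec R \times_T [T_{\cX,x}/G_x]$ where $T = T_{\cX,x}\gitq G_x$. Theorem~\ref{T:algebraization} requires an object of $\cX$ over $\Spec R$ for a complete local ring $R$; the morphism $\Spec R \to \cX$ does not even exist here (only $\hat{\cT}\to\cX$ does), and $\Spec R\times_T T_{\cX,x}$ is not a local ring in general, so it does not repair matters. One genuinely needs either the equivariant algebraization of Theorem~\ref{T:equivariant-algebraization}, or a different device entirely. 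You mention equivariant algebraization as one of two fallback options, but you propose it as optional rather than recognizing that the plain theorem simply does not apply to the object at hand. The second fallback you offer (averaging via linear reductivity after algebraizing a bare scheme) is not a workable substitute, because by that point the $G_x$-action and the isomorphism $[\Spec A/G_x]\cong U\times_T[T_{\cX,x}/G_x]$ are global structures that do not follow from an averaging argument on the output of the non-equivariant theorem.

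\textbf{Gap 2: effectivization is nontrivial and you skip it.} You assert that the morphism from the formal object into $\cX$ ``exists because the tangent spaces agree by construction.'' What deformation theory (using smoothness of $x$ and linear reductivity of $G_x$) gives you for free is only a compatible family of morphisms $\cT_n \cong \cX_n \hookrightarrow \cX$ on nilpotent thickenings. To promote this to a single morphism $\hat{\cT}\to\cX$ is the effectivization step, and it is where the paper invokes two serious inputs: coherent completeness (Black Box~\ref{BB:complete}) and the Tannakian formalism (Black Box~\ref{BB:tannakian}). This cannot be absorbed into ``tangent spaces agree.''

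\textbf{The route the paper actually takes, and why smoothness matters.} The key observation you miss is that $R = \hat{\oh}_{T,0}$ is the completion of the \emph{finite-type} $k$-algebra $\Gamma(T,\oh_T) = \Sym(T_{\cX,x}^\vee)^{G_x}$ (finitely generated because $G_x$ is linearly reductive). This is precisely the situation of Remark~\ref{R:proof-algebraization-special-case}: when the complete local ring is known to be the completion of a finite-type algebra, one can use plain Artin approximation instead of algebraization. Concretely, the paper defines the limit-preserving functor $F\co\Sch/T\to\Sets$ sending $T'\mapsto\{\,T'\times_T[T_{\cX,x}/G_x]\to\cX\,\}/\!\!\sim$, feeds the effectivized morphism $\hat{\cT}\to\cX$ into $F(\Spec\hat{\oh}_{T,0})$, and applies Artin approximation with $N=1$ over $T$. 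The $G_x$-action never needs to be ``added back'' because it is already carried by the objects $T'\times_T[T_{\cX,x}/G_x]$ in the definition of $F$. Étaleness of the resulting morphism at the distinguished point then follows from the tangent space isomorphism plus smoothness of source and target, not from a versality argument. This is why Theorem~\ref{T:smooth} avoids equivariant algebraization; that machine is reserved for Theorem~\ref{T:general}, where $x$ may be singular and the inverse limit $\ilim\Gamma(\cX_n,\oh_{\cX_n})$ is not known in advance to be the completion of a finite-type algebra.
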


\begin{remark}  This theorem was established in \cite[Thm.~1.1]{ahr}.  The theorem is true even if the stabilizer $G_x$ is not smooth if one replaces the tangent space $T_{\cX,x}$ with the normal space $N_x = (\cI/\cI^2)^\vee$, where $\cI \subset \oh_{\cU}$
denotes the sheaf of ideals defining $x$ in an open substack $\cU\subset \cX$ where $x$ is a closed point.  In the case that the stabilizer is smooth, $T_{\cX, x} \cong N_x$.
\end{remark}

This theorem implies that $\cX$ and $[T_{\cX,x} /G_x]$ have a common \'etale neighborhood of the form $[\Spec A / G_x]$. 

In the case that $x$ is not necessarily a smooth point of $\cX$, one can prove a similar structure theorem:

\begin{theorem}\cite[Thm.~1.2]{ahr}\label{T:general}
Let $\cX$ be a quasi-separated algebraic stack, locally 
of finite type over an algebraically closed field $k$, with affine stabilizers.  Let $x \in \cX$ be a $k$-point with a linearly reductive stabilizer.  Then there exist an affine scheme $\Spec A$ with an action of $G_x$, a $k$-point $w \in \Spec A$ fixed by $G_x$, and an \'etale morphism
$$f\colon \bigl([\Spec A/G_x],w\bigr) \to (\cX,x)$$
such that $f$ induces an isomorphism
    of stabilizer groups at $w$. 
\end{theorem}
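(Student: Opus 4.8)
The plan is to algebraize the formal neighbourhood of $x$ in $\cX$ after first recognizing it as a quotient stack; the linear reductivity of $G_x$ is exactly what makes both steps work. To begin, I would shrink $\cX$ to a quasi-compact open substack in which $x$ is a closed point (harmless, since $\cX$ has affine stabilizers). Because $k$ is algebraically closed and $G_x$ is linearly reductive, the residual gerbe of $\cX$ at $x$ is $\cG_x\cong BG_x=[\Spec k/G_x]$, sitting as a closed substack of $\cX$. Write $\cX^{[n]}\hookrightarrow\cX$ for the $n$-th infinitesimal thickening of $\cG_x$ (cut out by $\cI^{n+1}$, where $\cI$ is the ideal of $\cG_x$); each closed immersion $\cX^{[n-1]}\hookrightarrow\cX^{[n]}$ is a square-zero extension by a coherent sheaf on $\cG_x$, i.e.\ by a finite-dimensional $G_x$-representation.

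Next I would prove by induction on $n$ that $\cX^{[n]}\cong[\Spec A_n/G_x]$, compatibly in $n$, where $A_n$ is an artinian local $k$-algebra with a $G_x$-action preserving $\fm_{A_n}$; equivalently, that the $G_x$-torsor $\Spec k\to BG_x=\cX^{[0]}$ extends step by step to a $G_x$-torsor over each $\cX^{[n]}$. The obstruction to extending the torsor across a square-zero extension by a $G_x$-representation, as well as the set of choices and their automorphisms, are governed by the groups $\H^{i}(BG_x,-)$ with $i\le 2$; those with $i>0$ vanish because $\H^{>0}(BG_x,\cF)=\H^{>0}(G_x,\cF)=0$ for every coherent $\cF$, since $G_x$ is linearly reductive. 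Passing to the limit, $\hat A:=\ilim_n A_n$ is a complete local noetherian $k$-algebra with a $G_x$-action fixing $\fm_{\hat A}$, the formal quotient stack $[\Spf\hat A/G_x]$ equals $\colim_n\cX^{[n]}$, and its tangent space is $T_{\cX,x}$ (or the normal space $N_x$ when $G_x$ is not smooth) with its linear $G_x$-action. A suitable form of coherent Tannaka duality for algebraic stacks---or Grothendieck's existence theorem applied to $\cX$---then shows the compatible family $\{\cX^{[n]}\to\cX\}$ is effective, i.e.\ there is a $G_x$-equivariant object of $\cX$ over $\Spec\hat A$, equivalently a morphism $\hat\xi\colon[\Spec\hat A/G_x]\to\cX$ restricting to the given family.

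The object $\hat\xi$ is formally versal at $x$, since $[\Spec\hat A/G_x]$ is by construction the formal neighbourhood of $x$ in $\cX$, so every infinitesimal deformation of $x$ inside $\cX$ is pulled back from $\hat\xi$. I would then invoke the $G_x$-equivariant generalization of Artin algebraization---proved by the method of Lecture~2, namely Conrad--de Jong approximation (Theorem~\ref{T:conrad-dejong}) applied to the limit-preserving category fibered in groupoids of $G_x$-equivariant objects of $\cX$, with formal versality forcing the successive approximations to glue to all orders. This yields an affine scheme $\Spec A$ of finite type over $k$ with a $G_x$-action, a $G_x$-fixed $k$-point $w$, a morphism $f\colon[\Spec A/G_x]\to\cX$ with $f(w)=x$, and a $G_x$-equivariant isomorphism $\hat A\cong\hat{\oh}_{\Spec A,w}$ identifying the completion of $f$ at $w$ with $\hat\xi$. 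Setting up and proving this equivariant algebraization---checking Artin's axioms for the equivariant problem and carrying the $G_x$-action through the approximation arguments---is the main obstacle; the conceptually decisive use of linear reductivity, by contrast, is the thickening step above.

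Finally, $f$ induces an isomorphism on complete local rings at $w$ (hence is formally \'etale there) and an isomorphism of stabilizers $G_x\iso G_x$ at $w$, so by openness of formal versality---the stacky analogue of the argument in Corollary~\ref{C:etale-local}---$f$ is \'etale on a $G_x$-invariant open substack $\cW\subset[\Spec A/G_x]$ containing $w$. Since $G_x$ is linearly reductive, $\pi\colon[\Spec A/G_x]\to\Spec A^{G_x}$ is a good moduli space; the complement $Z$ of $\cW$ is closed and $G_x$-invariant, so $\pi(Z)$ is closed, and $\pi(w)\notin\pi(Z)$ because the fibre over $\pi(w)$ has $\{w\}$ as its unique closed orbit and hence meets the closed invariant set $Z$ only if $w\in Z$. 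Choosing $h\in A^{G_x}$ with $h|_{\pi(Z)}=0$ and $h(\pi(w))\neq 0$ and replacing $A$ by $A_h$, the morphism $[\Spec A_h/G_x]\to\cX$ is \'etale, $\Spec A_h$ is affine with a $G_x$-action, $w$ is $G_x$-fixed, and the stabilizer map at $w$ is an isomorphism, as required. The sharper Theorem~\ref{T:smooth} comes out of the same argument when $x$ is smooth, replacing $\hat A$ by its associated graded ring, which is $G_x$-equivariantly the coordinate ring of $T_{\cX,x}$ because $G_x$ is linearly reductive.
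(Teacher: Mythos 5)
Your proposal is correct in outline and arrives at the same endpoint, but the first step takes a genuinely different route from the paper. You build the quotient presentation of the formal neighbourhood \emph{directly}, by lifting the $G_x$-torsor $\Spec k\to BG_x$ step by step across the square-zero thickenings $\cX^{[n-1]}\hookrightarrow\cX^{[n]}$, using that $\cX^{[n-1]}$ is cohomologically affine (a nilpotent thickening of $BG_x$) to kill the relevant $\H^{>0}$. The paper instead produces compatible \emph{closed immersions} $\cX_n\hookrightarrow\cT_n$ into the truncations of the explicit quotient stack $\cT=[T_{\cX,x}/G_x]$ (or $[N_x/G_x]$), so that the quotient presentation of each $\cX_n$, and ultimately of the effectivization $\cZ\hookrightarrow\hat\cT$, comes for free by pulling back the one on $\cT$. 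These are two packagings of the same deformation-theoretic input. What the paper's version buys is that one only needs to check coherent completeness (Black Box~\ref{BB:complete}) for the explicitly presented $\hat\cT$, and one gets a global-quotient ambient stack over the GIT base $T$ to approximate over; your version is more intrinsic and avoids any reference to the tangent or normal space in the construction itself, which is a perfectly valid alternative.

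There is one real gap in the effectivization step. You write that ``a suitable form of coherent Tannaka duality for algebraic stacks---or Grothendieck's existence theorem applied to $\cX$---then shows the compatible family $\{\cX^{[n]}\to\cX\}$ is effective.'' Grothendieck's existence theorem does not apply here: there is no proper morphism to a complete local noetherian base in sight. What is actually needed are two separate inputs that you are conflating into one: Tannakian formalism (Black Box~\ref{BB:tannakian}), which identifies $\Hom([\Spec\hat A/G_x],\cX)$ with right-exact tensor functors $\Coh(\cX)\to\Coh([\Spec\hat A/G_x])$, \emph{and} coherent completeness of $[\Spec\hat A/G_x]$ along $BG_x$ (Black Box~\ref{BB:complete}), which says $\Coh([\Spec\hat A/G_x])\simeq\ilim_n\Coh(\cX^{[n]})$. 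The second of these is a substantial theorem in its own right (stronger than formal GAGA over the GIT quotient, since the $\cX^{[n]}$ are thickenings of the closed \emph{point}, not of the fibre over the GIT base point) and is the place where linear reductivity enters the effectivization; it cannot be replaced by Tannakian formalism alone or by Grothendieck existence. Once that is supplied, the remainder of your argument---formal versality, equivariant Artin algebraization via Theorem~\ref{T:equivariant-algebraization}, and the final \'etale shrinking through the good moduli space $\Spec A^{G_x}$---matches the paper's strategy and is correct.
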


These theorems  
justify the philosophy that quotient stacks of 
the form $[\Spec A / G]$, where $G$ is a linearly reductive group, are the building blocks
 of algebraic stacks near points with linearly reductive stabilizers in the same way that affine schemes are the building blocks of schemes (and algebraic spaces).
 
 These theorems were known in the following special cases:
\begin{enumerate}
	\item $\cX$ is a Deligne-Mumford stack \cite{abramovich-vistoli}.
	\item More generally, $\cX$ has quasi-finite inertia (i.e. all stabilizers groups are finite but not necessarily reduced).  This follows from \cite[Thm.~3.2]{tame} and \cite[\S 4]{keel-mori}. 
	\item $\cX \cong [X/ G]$ where $G$ is a linearly reductive algebraic group acting on an affine scheme $X$ and $G_x$ is smooth and linearly reductive.  This case is often referred to as Luna's \'etale slice theorem \cite[p.~97]{luna}.  We will discuss the relation between the theorems above and Luna's \'etale slice theorem in Section \ref{S:luna}.  
	We do emphasize though that there is still content in the theorems even in the case that $\cX \cong [\Spec A / G]$ as the theorems provide \'etale neighborhoods which are quotient stacks of affine schemes by the {\it stabilizer}.
	\item $\cX \cong [X/G]$ where $G$ is a smooth affine group scheme and $X$ is a normal scheme \cite[\S2.2]{alper-kresch}.
	\item $\cX = \fM_{g,n}^{\ss}$ is the 
moduli stack of semistable curves. This is the central result of \cite{alper-kresch}, where it is also shown that $f$ can taken to be representable.
\end{enumerate}

We mention here counterexamples to  Theorems \ref{T:smooth} and \ref{T:general} if either the linearly reductive hypothesis or the condition of affine stabilizers is weakened.

\begin{example} \label{ex1}
Some reductivity assumption of the stabilizer $G_x$ is necessary in Theorem \ref{T:general}.  For instance,  consider the group scheme $G= \Spec k[x,y]_{xy+1} \to \AA^1 = \Spec k[x]$ (with multiplication defined by $y \mapsto xyy' + y + y'$), where the generic fiber is $\GG_m$ but the fiber over the origin is $\GG_a$.  Let $\cX = BG$ and $x \in \cX$ be the point corresponding to the origin.  There does not exist an \'etale morphism $([W/ \GG_a], w) \to (\cX, x)$, where $W$ is an algebraic space over $k$ with an action of $\GG_a$.
\end{example}

\begin{example} \label{ex2}
It is essential to require that the stabilizer groups are affine in a neighborhood of $x \in \cX$.  For instance, let $X$ be a smooth curve and $\cE \to X$ be a group scheme whose generic fiber is a smooth elliptic curve but the fiber over a point $x \in X$ is isomorphic to $\GG_m$.  Let $\cX = B\cE$.  There is no \'etale morphism $([W/ \GG_m], w) \to (\cX, x)$, where $W$ is an affine $k$-scheme with an action of $\GG_m$.
\end{example}

\subsection{Ingredients in the proof of Theorem \ref{T:smooth}}

There are four main techniques employed in the proof of Theorem \ref{T:smooth}:
\begin{enumerate}
\item deformation theory,
\item coherent completeness,
\item Tannakian formalism, and
\item Artin approximation.
\end{enumerate}

Deformation theory produces an isomorphism between the $n$th infinitesimal neighborhood $\cT_n$ of $0$ in $\cT = [T_{\cX,x} / G_x]$ and the $n$th infinitesimal neighborhood $\cX_n$ of $x$ in 
$\cX$. It is not at all obvious, however, that the system of closed morphisms $\{f_n \co \cT_n \to \cX\}$ algebraizes. We establish algebraization in two steps. 

The first step is effectivization. To accomplish this, we prove a result similar in spirit to Grothendieck's existence theorem \cite[III.5.1.4]{ega}, which we refer to as \emph{coherent completeness}.  To motivate the definition, recall that if $(A,\fm)$ is a complete local noetherian ring, then 
$\Coh(\Spec A) = \ilim_n \Coh( \Spec (A/\fm^{n+1}))$.  Here, if $X$ is a noetherian scheme, then $\Coh(X)$ denotes the category of coherent $\oh_{X}$-modules.

\medskip

\begin{blackbox}(Coherent completeness) \label{BB:complete}
Let $G$ be a linearly reductive affine group scheme over an algebraically closed field $k$. Let $\Spec A$ be a noetherian affine scheme with an action of~$G$, and let $x \in \Spec A$ be a $k$-point fixed by $G$.   Suppose that $A^{G}$ is a complete local ring. Let $\cX = [\Spec A / G]$ and let $\cX_n$ be the $n$th infinitesimal neighborhood of $x$.  Then the natural functor
\begin{equation} \label{eqn-coh}
\Coh(\cX)  \to  \ilim_n \Coh\bigl(\cX_n\bigr)
\end{equation}
is an equivalence of categories.
\end{blackbox}

\begin{remark}
This was proven in \cite[Thm.~3]{ahr}.  The proof is not tremendously difficult but does require some care.
\end{remark}

\begin{remark}  Let $Y = \Spec A^G$ and let $Y_n = \Spec A^G / (\fm \cap A^G)^{n+1}$ be the $n$th nilpotent thickening of the image of $x$ under $\Spec A \to \Spec A^G$. The above theorem implies that
\begin{equation} \label{E:formal-gaga}
\Coh(\cX) \to \ilim_n \Coh(\cX \times_Y Y_n)
\end{equation}
is an equivalence, which had been established in \cite{geraschenko-brown}.  We emphasize that the above theorem is significantly stronger in that it involves families of coherent sheaves defined only on the nilpotent thickenings of the closed point $x \in \cX$ rather the fiber $\cX \times_Y Y_0$.  
\end{remark}

\begin{example}
Consider $\cX = [\AA^N / \GG_m]$.  Then the theorem states that $\GG_m$-equivariant sheaves on $\AA^N$ are equivalent to compatible families of $\GG_m$-equivariant sheaves on $\Spec k[x_1, \ldots, x_N] / (x_1, \ldots, x_N)^{n+1}$.  Meanwhile, the equivalence \eqref{E:formal-gaga} is trivial as $Y_n = Y = \Spec k$.
\end{example}

The other key ingredient in the proof of Theorem \ref{T:smooth} is Tannakian formalism.  

\medskip

\begin{blackbox}(Tannakian formalism) \label{BB:tannakian}
Let $\cX$ be an excellent stack and $\cY$ be a noetherian algebraic stack with affine stabilizers.  Then the natural functor
$$\Hom(\cX, \cY) \to \Hom_{r\otimes, \simeq}\bigl(\Coh(\cY), \Coh(\cX)\bigr)$$
 is an equivalence of categories, where $\Hom_{r\otimes, \simeq}(\Coh(\cY), \Coh(\cX))$ denotes the category whose objects are right exact monoidal functors $\Coh(\cY) \to \Coh(\cX)$ and morphisms are natural isomorphisms of functors. 
\end{blackbox}

\begin{remark}
In the above generality, this result was established in \cite[Thm.~1.1]{hr-tannaka}.  Other versions had been established in \cite{lurie} and \cite{bhatt-halpern-leistner}.
\end{remark}

This proves that morphisms between algebraic stacks $\cY \to \cX$ are equivalent to symmetric monoidal functors $\Coh(\cX) \to \Coh(\cY)$. Therefore, to prove Theorem \ref{T:smooth}, we can  combine Black Box \ref{BB:complete} with Black Box \ref{BB:tannakian} and the above deformation-theoretic observations to show that the morphisms $\{f_n \colon \cT_n \to \cX\}$ effectivize to $\hat{f} \colon \hat{\cT} \to \cX$, where $\hat{\cT} = T_{\cX,x} \times_{T_{\cX,x} \gitq G_x} \Spec \hat{\oh}_{T_{\cX,x} \gitq G_x,0}$. 
The morphism $\hat{f}$ is then algebraized using Artin approximation over the GIT quotient $T_{\cX,x} \gitq G_x$.
We will give the details of this argument in the next lecture.

\newpage
  
\section*{Lecture 4: A Luna \'etale slice theorem for algebraic stacks and applications}
\refstepcounter{section}

In this lecture, we will give proofs of Theorems \ref{T:smooth} and \ref{T:general}.  Recall that these theorems assert roughly that any algebraic stack with affine stabilizers is \'etale locally a quotient stack in a neighborhood of a point with a linearly reductive stabilizer.  The proof of Theorem \ref{T:general} will rely on an equivariant version of Artin approximation (Theorem \ref{T:equivariant-algebraization}).  We will also give several applications of Theorems \ref{T:smooth} and \ref{T:general}.

Throughout this lecture, we will use the notation that if $\cX$ is an algebraic stack over a field $k$ and $x \in \cX$ is a closed $k$-point, then $\cX_n$ denotes the $n$th nilpotent thickening of the inclusion of the residual gerbe $BG_x \hookrightarrow \cX$; more precisely, if $BG_x \hookrightarrow \cX$ is defined by the sheaf of ideals $\cI$, then $\cX_n \hookrightarrow \cX$ is defined by $\cI^{n+1}$.
The point $x$ is not included in the notation $\cX_n$ but it should always be clear from the context.

\subsection{Proof of Theorem \ref{T:smooth}}

\begin{proof}[Proof of Theorem \ref{T:smooth}]
We may assume that $x \in \cX$ is a closed point.
Define the quotient stack $\cT= [T_{\cX,x}/G_x]$.  Since $G_x$ is linearly reductive and $x \in \cX$ is a smooth point, a simple deformation theory argument implies that there are isomorphisms $\cX_n \cong \cT_n$. 

Let $\cT  \to  T = T_{\cX,x} \gitq G_x$ be the morphism to the GIT quotient, and denote by $0 \in T$ the image of the origin.    The fiber product $\hat{\cT} := \Spec \hat{\oh}_{T,0} \times_T \cT$ is noetherian and a quotient stack of the form $[\Spec A / G]$ with $A^G$ a complete local ring.  Therefore, $\hat{\cT}$ satisfies the hypotheses of Black Box \ref{BB:complete}.  We have equivalences
\begin{align*}
\Hom(\hat{\cT}, \cX) 	& \simeq \Hom_{r\otimes, \simeq}\bigl( \Coh(\cX), \Coh(\hat{\cT})\bigr)  & & \text{(Tannakian formalism)}\\
				& \simeq \Hom_{r\otimes, \simeq}\bigl( \Coh(\cX), \ilim \Coh\bigl(\cT_n\bigr) \bigr)  & & \text{(coherent completeness)}\\
				& \simeq \ilim \Hom_{r\otimes, \simeq}\bigl( \Coh(\cX), \Coh\bigl(\cT_n\bigr) \bigr)  & & \\
				& \simeq \ilim \Hom\bigl(\cT_n, \cX\bigr) & & \text{(Tannakian formalism)}.
\end{align*}
Thus the morphisms $\cT_n \cong \cX_n \hookrightarrow \cX$ extend to a morphism $\hat{\cT} \to \cX$ filling in the diagram
\vspace{.2cm}
$$
\xymatrix{
\cX_n \cong \cT_n \ar[r] \ar@/^1.6pc/[rrr]	& \hat{\cT} \ar[r] \ar[d]	\ar@/^1pc/@{-->}[rr]					& \cT \ar[d] & \cX\\
& \Spec \hat{\oh}_{T,0} \ar[r]	\ar@{}[ur]|\square				& T.
}
$$

The functor 
$$F \co \Sch/T  \to  \Sets, \qquad (T' \to T) \mapsto \big\{ T' \times_T \cT \to \cX\big\} / \sim$$
is easily checked to be limit preserving.  The morphism $\hat{\cT} \to \cX$ yields an element of $F$ over $\Spec \hat{\oh}_{T,0}$.  By Artin approximation, there exist an \'etale morphism $(U,u) \to (T,0)$ where $U$ is an affine scheme and a morphism $(U \times_T \cT, (u,0) ) \to  (\cX,x)$ agreeing with $(\hat{\cT},0) \to (\cX,x)$ to first order.  Since $U \times_T \cT$ is smooth at $(u,0)$ and $\cX$ is smooth at $x$, and since $U \times_T \cT \to \cX$ induces an isomorphism of tangent spaces at $(u,0)$, the morphism $U \times_T \cT \to \cX$ is \'etale at $(u,0)$.  After shrinking $U$ suitably, the theorem is established.
\end{proof}

\begin{remark}
The smoothness hypothesis of $x \in \cX$ is used above to establish the isomorphisms $\cT_n \cong \cX_n$ as well as the \'etaleness of $U \times_T \cT \to \cX$.  More critically, though, it implies that $\ilim \Gamma(\cX_n, \oh_{\cX_n})$ is  the completion of a finitely generated $k$-algebra since this inverse limit is identified with $ \hat{\oh}_{T,0}$.  If $x \in \cX$ is not smooth, there does not appear to be a direct way to establish that $\ilim \Gamma(\cX_n, \oh_{\cX_n})$ (which can be identified with the $G_x$-invariants of a miniversal deformation space of $G_x$) is the completion of a finitely generated $k$-algebra.    Recall that Artin algebraization was a direct consequence of Artin approximation in the case that the complete local ring was known to be the completion of a finitely generated algebra (see Remark \ref{R:proof-algebraization-special-case}).  In a similar manner, one could deduce Theorem \ref{T:general} if we did know that $\ilim \Gamma(\cX_n, \oh_{\cX_n})$ was the completion of a finitely generated algebra.  In order to circumvent this problem, we will establish an equivariant version of Artin algebraization.
\end{remark}

\subsection{Equivariant Artin algebraization}

\begin{theorem}[Equivariant Artin algebraization]\label{T:equivariant-algebraization}
Let $H$ be a linearly reductive affine group scheme over $k$.
Let $\cX$ be a limit preserving category fibered in groupoids over $k$. 
Let $\cZ = [\Spec A / H]$ be a noetherian algebraic stack over $k$. Suppose that $A^{H}$ is a complete local $k$-algebra.  Let $z \in \cZ$ be the unique closed point.
 Let $\hat{\xi} \colon \cZ \to \cX$ be a morphism
that is formally versal at $z\in \cZ$.\footnote{This is a stacky extension of the notion of formal versality introduced in Definition \ref{D:versal}.  Namely, it means 
 that  for every 
commutative diagram of solid arrows
\[
\xymatrix{
\cZ_0 \ar@{(->}[r]
  & \cT\ar[r]\ar@{(->}[d]
  & \cZ\ar[d]^{\hat{\xi}} \\
& \cT'\ar[r]\ar@{-->}[ur] & \cX
}
\]
where $\cT \hookrightarrow \cT'$ is a closed immersion of local artinian stacks over $k$ (i.e. noetherian algebraic stacks over $k$ with a unique point), there is a lift $\cT' \to \cZ$ filling in the above diagram.
}  Then there exist 
\begin{enumerate}
\item
  an algebraic stack $\cW = [\Spec B / H]$ of finite type over $k$ and a closed point $w \in \cW$;
\item
  a morphism
$\xi \co \cW \to \cX$; 
\item an isomorphism $\cZ \cong \hat{\cW}_w$, where $\hat{\cW}_w$ is defined as the fiber product
$$\xymatrix{
\hat{\cW}_w \ar[r] \ar[d]	& \cW \ar[d]^{\phi} \\
\Spec \hat{\oh}_{W, \phi(w)} \ar[r] \ar@{}[ur]|\square	& W = \Spec B^H ; 
}$$
\item a compatible family of isomorphisms $\hat{\xi}|_{\cZ_n} \cong \xi|_{\cW_n}$ (under the identification $\cZ_n \cong \cW_n$) for $n \ge 0$.
  \end{enumerate}
\end{theorem}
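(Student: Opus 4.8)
I would run the proof of Artin algebraization (Theorem~\ref{T:algebraization}) $H$-equivariantly, replacing its concluding ``pass to the limit'' step by the effectivization argument from the proof of Theorem~\ref{T:smooth}: coherent completeness (Black Box~\ref{BB:complete}) together with the Tannakian formalism (Black Box~\ref{BB:tannakian}). Linear reductivity of $H$ enters via exactness of $(-)^H$, finite generation of rings of invariants, and finiteness of $G_z$-representations. Write $W = \Spec B^H$ for the good moduli space of $\cW = [\Spec B/H]$, and write $\cI_\cZ$, $\cI_\cW$ for the ideals of the residual gerbes in $\cZ$, $\cW$.

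\textbf{Step 1: equivariant Conrad--de Jong approximation.} First I would establish the $H$-equivariant analogue of Theorem~\ref{T:conrad-dejong}: for each $N \ge 0$ there exist a finite type quotient stack $\cW = [\Spec B/H]$ with a closed point $w$, a morphism $\xi \co \cW \to \cX$, an isomorphism $\cZ_N \cong \cW_N$ compatible with an isomorphism $\hat{\xi}|_{\cZ_N} \cong \xi|_{\cW_N}$, and an isomorphism of graded $G_z$-representations $\bigoplus_m \cI_\cZ^m/\cI_\cZ^{m+1} \cong \bigoplus_m \cI_\cW^m/\cI_\cW^{m+1}$. This is proved by transcribing the proof of Theorem~\ref{T:conrad-dejong} to the equivariant setting: since $\cX$ is limit preserving and $[\Spec A/H]$ is a filtered limit of the quotient stacks $[\Spec B_\lambda/H]$ by the finite type $H$-stable subalgebras $B_\lambda$ of $A$ (the $H$-action being a comodule structure, hence locally finite), one factors $\hat{\xi}$ through a finite type $[\Spec B'/H]$; after adjoining finitely many semiinvariants one arranges an $H$-equivariant surjection onto $A$ from the noetherian ring $\hat{C} := B' \tensor_{(B')^H} \hat{\oh}_{\Spec (B')^H, v}$ (whose $H$-invariants are $\hat{\oh}_{\Spec (B')^H, v}$); one chooses an $H$-equivariant two-term resolution of $A$ by free $\hat{C}$-modules and applies Artin approximation (Theorem~\ref{T:approx}) \emph{over the finite type $k$-scheme $\Spec (B')^H$} to the limit preserving functor of $H$-equivariant complexes of the prescribed shape. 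Since the conclusions (1) and (3) of Lemma~\ref{L:artin-rees-generalization} --- the only ones needed --- require neither completeness nor Krull's intersection theorem, the Artin--Rees analysis goes through for these $H$-equivariant complexes over $\hat{C}$, filtered by the ideal of the closed orbit; applying it and passing to the residual gerbe yields the required graded isomorphism of $G_z$-representations. Finite generation of $B^H$ over $k$ --- hence that $\cW$ is genuinely of finite type --- is Hilbert's theorem. Apply this with $N = 1$.

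\textbf{Step 2: formal versality.} Starting from the isomorphism $g_1 \co \cW_1 \iso \cZ_1$ provided by Step~1, I would build compatible isomorphisms $g_n \co \cW_n \iso \cZ_n$. Given $g_n$, formal versality of $\hat{\xi}$, applied to the closed immersion $\cW_n \inj \cW_{n+1}$ of local artinian stacks together with the morphism $\cW_n \to \cZ$ obtained from $g_n$ followed by $\cZ_n \inj \cZ$ and the morphism $\cW_{n+1} \to \cX$ given by $\xi$, produces a lift $\cW_{n+1} \to \cZ$; it factors through $\cZ_{n+1}$, and the resulting morphism $g_{n+1} \co \cW_{n+1} \to \cZ_{n+1}$ restricts to $g_n$ and induces a map $\bigoplus_m \cI_\cZ^m/\cI_\cZ^{m+1} \to \bigoplus_m \cI_\cW^m/\cI_\cW^{m+1}$ which is an isomorphism in degrees $\le 1$, hence (both sides being generated in degree $1$) surjective in every degree, hence --- using the graded isomorphism of Step~1 and finiteness of $G_z$-representations --- an isomorphism in every degree; therefore $g_{n+1}$ is an isomorphism. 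The object-compatibility built into formal versality simultaneously yields compatible isomorphisms $\hat{\xi}|_{\cZ_n} \cong \xi|_{\cW_n}$, which is conclusion~(4).

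\textbf{Step 3: effectivization, and the main obstacle.} Set $\hat{\cW}_w = \cW \times_W \Spec \hat{\oh}_{W, \phi(w)}$ as in the statement; one has $(\hat{\cW}_w)_n = \cW_n$ since $\cW_n$ is annihilated by a power of $\fm_{\phi(w)}$. Both $\cZ$ and $\hat{\cW}_w$ are quotient stacks $[\Spec(-)/H]$ whose ring of $H$-invariants is a complete local noetherian ring, so Black Box~\ref{BB:complete} gives $\Coh(\cZ) \simeq \ilim_n \Coh(\cZ_n)$ and $\Coh(\hat{\cW}_w) \simeq \ilim_n \Coh(\cW_n)$; composing with the compatible equivalences induced by the $g_n$ produces a right exact monoidal equivalence $\Coh(\cZ) \simeq \Coh(\hat{\cW}_w)$. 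As $\cZ$ is noetherian with affine stabilizers and $\hat{\cW}_w$ is excellent, the Tannakian formalism (Black Box~\ref{BB:tannakian}) converts this into a morphism $\hat{\cW}_w \to \cZ$ which, inducing an equivalence on coherent sheaves, is an isomorphism --- conclusion~(3) --- while $\cW$, $w$, $\xi$ are conclusions~(1) and~(2). The genuinely new technical work is Step~1: making the Conrad--de Jong/Artin--Rees argument $H$-equivariant and verifying the graded isomorphism of $G_z$-representations (the equivariant avatar of condition~(5) of Conrad--de Jong), which one must establish working over the auxiliary noetherian ring $\hat{C}$ rather than over a complete local ring, since $\cZ$ itself need not be complete along its residual gerbe. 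The conceptual obstacle, precisely as in the proof of Theorem~\ref{T:smooth}, is Step~3: the isomorphisms $g_n$ do not assemble into an isomorphism $\cZ \cong \hat{\cW}_w$ by any naive limit argument, so coherent completeness together with the Tannakian formalism are essential for the effectivization.
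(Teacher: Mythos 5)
Your proposal is correct and follows essentially the same route the paper sketches: an equivariant/stacky Conrad--de Jong approximation obtained by applying Artin approximation over the finite-type GIT quotient of a finite-type model of $\cZ$, formal versality to build compatible isomorphisms $\cW_n \cong \cZ_n$, and then effectivization via coherent completeness (Black Box~\ref{BB:complete}) together with Tannakian duality (Black Box~\ref{BB:tannakian}) to replace the naive ``take the limit'' step of the non-equivariant proof of Theorem~\ref{T:algebraization}. You have correctly identified both the key technical content (the equivariant Conrad--de Jong lemma, including that only parts (1) and (3) of Lemma~\ref{L:artin-rees-generalization} are needed and survive the passage to the $\cI$-adic filtration) and the key conceptual obstacle (that the $g_n$ do not glue by any elementary limit argument), which matches the argument the paper defers to \cite[App.~A]{ahr}.
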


\begin{remark} If one takes $H$ to be the trivial group, one recovers precisely the statement of Artin algebraization given in Theorem \ref{T:algebraization}.
\end{remark}

\begin{proof} This can be proved in a similar fashion to Theorem \ref{T:algebraization} by appealing to a stacky generalization of Theorem \ref{T:conrad-dejong}.  See \cite[App. A]{ahr}.
\end{proof}

\subsection{Proof of Theorem \ref{T:general}}

\begin{proof}[Proof of Theorem \ref{T:general}]
We may assume that $x \in \cX$ is a closed point.
Let $\cT= [T_{\cX,x}/G_x]$, $\cT  \to  T = T_{\cX,x} \gitq G_x$ be the morphism to the GIT quotient,  
and $\hat{\cT} := \Spec \hat{\oh}_{T,0} \times_T \cT$ where $0 \in T$ is the image of the origin.
Since $G_x$ is linearly reductive, a simple deformation theory argument implies that there are compatible closed immersions $\cX_n \hookrightarrow \cT_n$.  The ideals sheaves $\cI_n$ defining these closed immersions give a compatibly family $\{ \oh_{\cX_n} / \cI_n \}$ of coherent $\oh_{\cX_n}$-modules.  Since $\hat{\cT}$ satisfies the hypotheses of Black Box \ref{BB:complete}, there exists an ideal sheaf $\cI \subset \oh_{\hat{\cT}}$ such that the surjection $\oh_{\hat{\cT}} \to \oh_{\hat{\cT}} / \cI$ extends the surjections $\oh_{\cX_n} \to  \oh_{\cX_n} / \cI_n$. Therefore there exists a closed immersion $\cZ \hookrightarrow \hat{\cT}$ extending the given closed immersions $\cX_n \hookrightarrow \cT_n$.  This yields a commutative diagram
\vspace{.2cm}
$$
\xymatrix{
\cX_n \ar@{^(->}[d] \ar@{^(->}[r] 	\ar@/^1.6pc/[rrr]	& \cZ \ar@{^(->}[d]  \ar@{-->}[rr]^{\hat{\xi}}	& & \cX\\
\cT_n  \ar@{^(->}[r] 	& \hat{\cT} \ar[r] \ar[d]					& \cT \ar[d] & \\
& \Spec \hat{\oh}_{T,0} \ar[r]	\ar@{}[ur]|\square				& T.
}
$$
of solid arrows.  Since $\cZ$ also satisfies the hypotheses of Black Box \ref{BB:complete} and the nilpotent thickenings $\cZ_n$ are identified with $\cX_n$, the equivalences
\begin{align*}
\Hom(\cZ, \cX) 	& \simeq \Hom_{r\otimes, \simeq}\bigl( \Coh(\cX), \Coh(\cZ)\bigr)  & & \text{(Tannakian formalism)}\\
				& \simeq \Hom_{r\otimes, \simeq}\bigl( \Coh(\cX), \ilim \Coh\bigl(\cX_n\bigr) \bigr)  & & \text{(coherent completeness)}\\
				& \simeq \ilim \Hom_{r\otimes, \simeq}\bigl( \Coh(\cX), \Coh\bigl(\cX_n\bigr) \bigr)  & & \\
				& \simeq \ilim \Hom\bigl(\cX_n, \cX\bigr) & & \text{(Tannakian formalism)}.
\end{align*}
imply the existence of a morphism $\hat{\xi} \co  \cZ \to \cX$ filling in the above diagram.  One can check easily that $\hat{\xi} \co \cZ \to \cX$ is formally versal.  By applying equivariant Artin algebraization with (Theorem \ref{T:equivariant-algebraization}) with $H=G_x$, we obtain a morphism $\xi \co \cW = [\Spec B / G_x] \to \cX$ where $\cW$ is of finite type over $k$, a closed point $w \in \cW$ and an isomorphism
 $\cZ \to \hat{\cW}_w$ over $\cX$, where $\hat{\cW}_w$ is defined as in the statement of Theorem \ref{T:equivariant-algebraization}.  (The fact that $\cZ \to \hat{\cW}_w$ commutes over $\cX$ follows from the compatible family of isomorphisms $\hat{\xi}|_{\cZ_n} \cong \xi|_{\cW_n}$ and the Tannakian formalism (Black Box \ref{BB:tannakian}) as $\cX$ is a noetherian algebraic stack with affine stabilizers.) Finally, it is easy to see that $\xi \co \cW \to \cX$ is \'etale at $w$ which completes the proof.
\end{proof}

\subsection{Applications}  In this section, we provide a few applications of Theorems \ref{T:smooth} and \ref{T:general}.  We will not include the proofs and instead refer the reader to \cite{ahr}. 

\subsubsection{Application 1} \label{S:luna}
Our first application is a generalization of Luna's \'etale slice theorem and can be viewed as a refinement of Theorems \ref{T:smooth} and \ref{T:general} in the case that $\cX = [X/G]$ is a quotient stack.

\begin{theorem}\label{T:luna}
Let $X$ be a quasi-separated algebraic space, locally of finite type over $k$, with an action of a smooth affine group scheme $G$ over $k$.  Let $x \in X$ be a $k$-point with a linearly reductive stabilizer $G_x$.   Then there exist an affine scheme $W$ with an action of $G_x$ which fixes a point $w$ and an unramified $G_x$-equivariant morphism $(W,w) \to (X,x)$ such that $\tilde{f} \co W \times^{G_x} G \to X$ is \'etale.\footnote{Here, $W \times^{G_x} G$ denotes the quotient $(W \times G) / G_x$.  Note that there is an identification of GIT quotients  $(W \times^{G_x} G) \gitq G \cong W \gitq G_x$.}
 
Let $N_x = T_{X,x} / T_{G x, x}$ be the normal space to the orbit at $x$; this inherits a natural linear action of $G_x$.  If $x \in X$ is smooth, then it can be arranged that there is an \'etale $G_x$-equivariant morphism $W \to N_x$ such that $W \gitq G_x \to N_x \gitq G_x$ is \'etale and 
 $$\xymatrix{
N_x \times^{G_x} G \ar[d] &  W \times^{G_x} G\ar[r]^-{\tilde f} \ar[d] \ar[l]	& X \\
N_x \gitq G_x & W \gitq G_x \ar[l]	\ar@{}[ul]|\square				& 
}$$
is cartesian.
\end{theorem}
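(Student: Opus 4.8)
The plan is to deduce Theorem~\ref{T:luna} from Theorems~\ref{T:smooth} and~\ref{T:general} applied to the quotient stack $\cX := [X/G]$, and then to translate the resulting étale neighbourhood of the form $[\Spec A/G_x]$ back into the $G$-equivariant geometry of $X$ by a slicing argument in the spirit of Luna's original proof. First I would check that $\cX = [X/G]$ satisfies the hypotheses of Theorem~\ref{T:general}: it is quasi-separated and locally of finite type over $k$ because $X$ is (its diagonal is affine, being $G \times X \to X \times X$), and it has affine stabilizers since every stabilizer is a closed subgroup scheme of the affine group $G$. If $\bar x \in \cX$ denotes the image of $x$, then the stabilizer of $\bar x$ is $G_x$, which is linearly reductive by hypothesis, so Theorem~\ref{T:general} produces an affine scheme $\Spec A$ with a $G_x$-action, a $k$-point $w \in \Spec A$ fixed by $G_x$, and an étale morphism $f\colon ([\Spec A / G_x], w) \to ([X/G], \bar x)$ inducing an isomorphism of stabilizer groups at $w$.

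The second, and technically most delicate, step is to extract a slice. I would form the fibre product $P := [\Spec A / G_x] \times_{[X/G]} X$. Since $f$ is étale and $X$ is an algebraic space, $P$ is an algebraic space étale over $X$; since $P \to [\Spec A/G_x]$ is the pullback of the $G$-torsor $X \to [X/G]$, it is itself a $G$-torsor, so $P$ carries a $G$-action with $[P/G] \cong [\Spec A/G_x]$ and is affine over $[\Spec A/G_x]$. Pulling back further along the $G_x$-torsor $\Spec A \to [\Spec A/G_x]$ equips $\Spec A \times_{[X/G]} X$ with a compatible $G_x$-action, and the isomorphism of stabilizers at $w$ guarantees the existence of a $k$-point of $P$ lying over $w$, mapping to $x \in X$, and fixed by the relevant copy of $G_x$. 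Using that $P \to [\Spec A/G_x]$ is a smooth $G$-torsor (here $G$ is smooth) together with the linear reductivity of $G_x$ — which provides $G_x$-invariant affine open neighbourhoods of $w$ in $\Spec A$ and allows a $G_x$-fixed section over such a neighbourhood to be produced after shrinking — I would cut out a $G_x$-stable affine slice $W$, with a $G_x$-fixed point $w$ and a $G_x$-equivariant morphism $(W, w) \to (X, x)$, together with a $G$-equivariant isomorphism $W \times^{G_x} G \cong P'$ onto a $G$-stable open $P' \subseteq P$ that is still étale over $X$. Then $\tilde f\colon W \times^{G_x} G \cong P' \hookrightarrow P \to X$ is étale, and $(W,w) \to (X,x)$ is unramified because it factors the étale morphism $P \to X$ through the immersion $W \hookrightarrow W \times^{G_x} G$ coming from $e \in G$; passing to $G$-quotients yields the identification $(W \times^{G_x} G) \gitq G \cong W \gitq G_x$.

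For the case that $x$ is a smooth point of $X$, I would run the same argument starting from Theorem~\ref{T:smooth} instead: $[X/G]$ is then smooth at $\bar x$ (as $X \to [X/G]$ is smooth and surjective), and applying Theorem~\ref{T:smooth} — in the form of the remark following it, with the normal space in place of the tangent space, which here is exactly the $G_x$-representation $N_x = T_{X,x}/T_{Gx,x}$ — gives an affine étale morphism $(U,u) \to (N_x \gitq G_x, 0)$ together with a cartesian square exhibiting $[\Spec A/G_x]$ as $U \times_{N_x \gitq G_x} [N_x/G_x]$ and an étale morphism $[\Spec A/G_x] \to [X/G]$. Slicing $[\Spec A/G_x]$ compatibly for the two étale morphisms out of it, to $[X/G]$ and to $[N_x/G_x]$ — both of which induce isomorphisms on stabilizers at $w$ — produces a single affine $G_x$-scheme $W$ equipped with $G_x$-equivariant étale morphisms $W \to N_x$ and (after inducing up) $W \times^{G_x} G \to X$, and with $W \gitq G_x \cong U$ étale over $N_x \gitq G_x$; the displayed cartesian square then follows from the cartesian square of Theorem~\ref{T:smooth} by descent along the torsors $N_x \to [N_x/G_x]$ and $W \to [W/G_x]$.

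The main obstacle I expect is precisely the slicing step: turning the bare étale morphism $f\colon [\Spec A/G_x] \to [X/G]$ of stacks into an honest $G_x$-equivariant morphism of affine schemes $W \to X$ that induces it, while keeping $W$ affine and tracking both the $G_x$-fixed point and the isomorphism of stabilizers. This is Luna's classical étale slice argument carried out in the stacky setting; it is where linear reductivity of $G_x$ (for invariant affine neighbourhoods and equivariant sections) and the isomorphism-of-stabilizers conclusion of Theorems~\ref{T:smooth}–\ref{T:general} are essential, with everything else being formal manipulation of torsors and fibre products.
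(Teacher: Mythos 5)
Your high-level plan---apply Theorem~\ref{T:general} to $\cX=[X/G]$ at the image of $x$, obtain an \'etale $f\colon ([\Spec A/G_x],w)\to ([X/G],\bar x)$ inducing an isomorphism of stabilizers, and then convert this into a $G_x$-equivariant slice $W\to X$ by a torsor-theoretic argument---is the intended one (the paper itself flags Theorem~\ref{T:luna} as ``a refinement of Theorems~\ref{T:smooth} and~\ref{T:general} in the case $\cX=[X/G]$''), and the formal bookkeeping you do with the fibre products $P=[\Spec A/G_x]\times_{[X/G]}X$ and $\Spec A\times_{[X/G]}X$ is sound, including the observation that the $k$-point of $P$ above $(w,x)$ has stabilizer exactly $G_x\subset G$. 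One small caution: on $Q=\Spec A\times_{[X/G]}X$ the copy $\{e\}\times G_x$ acts \emph{freely} (it is the $G_x$-torsor $Q\to P$), so the point over $(w,x)$ is fixed only by the anti-diagonal copy $\{(\rho(h),h)\}\subset G\times G_x$; your phrase ``fixed by the relevant copy of $G_x$'' blurs exactly which copy, and getting this straight is essential to set up the slice.

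The real gap is in the slice-extraction step, which you rightly identify as the delicate one but then under-justify. What you need is a reduction of the structure group of the $G$-torsor $P\to\cW:=[\Spec A/G_x]$ from $G$ to $G_x$, compatible with the distinguished reduction over the residual gerbe $BG_x\hookrightarrow\cW$ at $w$ (where $P|_{BG_x}\cong G/G_x$ carries the canonical coset $eG_x$). Equivalently, you need a section of the smooth affine morphism $P\times^G(G/G_x)\to\cW$ extending the given one over $BG_x$; the resulting $G_x$-torsor $W\to\cW$ then satisfies $W\times^{G_x}G\cong P$, the map $W\to X$ is $G_x$-equivariant and unramified (factoring through the immersion $W\hookrightarrow W\times^{G_x}G$), and $W$ is affine because $W\to\Spec A^{G_x}$ is cohomologically affine. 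But producing this section ``after shrinking'' is \emph{not} a direct consequence of linear reductivity of $G_x$: linear reductivity kills the cohomological obstruction to lifting the section over the infinitesimal neighbourhoods $\cW_n$ of $BG_x$, but passing from that formal/pro-infinitesimal solution to an honest section over a $G_x$-invariant affine neighbourhood requires re-running the coherent completeness (Black Box~\ref{BB:complete}) $+$ Tannakian (Black Box~\ref{BB:tannakian}) $+$ Artin approximation machinery over the GIT quotient $\Spec A^{G_x}$, exactly as in the proof of Theorem~\ref{T:general} itself---or, alternatively, an appeal to Theorem~\ref{T:sumi3}, which in turn rests on the same machinery. Calling it ``Luna's classical \'etale slice argument carried out in the stacky setting'' understates this: the nodal-cubic example in Remark~\ref{R:luna} is precisely a case where the classical Zariski-local slice fails and the extra machinery is unavoidable. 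The same issue recurs in your smooth case: the presentation from Theorem~\ref{T:smooth} already gives a $G_x$-equivariant \'etale $\Spec A\to N_x$, but arranging that the \emph{same} $W$ also maps $G_x$-equivariantly and unramified to $X$ amounts to identifying the two $G$-torsors $P$ and $\Spec A\times^{G_x}G$ over $\cW$ (they agree over $BG_x$ but need not agree globally), and establishing that identification in a neighbourhood is again an effectivization-and-algebraization problem, not a formal descent step.
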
 

\begin{remark}\label{R:luna}
The theorem above follows from Luna's \'etale slice theorem \cite{luna} if $X$ is affine. In this case, Luna's \'etale slice theorem is stronger than Theorem \ref{T:luna} as it asserts additionally that $W \to X$ can be arranged to be a locally closed immersion (which is obtained by choosing a $G_x$-equivariant section of $T_{X,x} \to N_x$ and then restricting to an open subscheme of the inverse image of $N_x$ under a $G_x$-equivariant \'etale morphism $X \to T_{X,x}$).  However, when $W$ is not normal, it is necessary to allow unramified neighborhoods (for instance, consider the example of $\GG_m$ acting on the nodal cubic). 
\end{remark}

\subsubsection{Application 2}
Our second application is a generalization of Sumihiro's theorem on torus actions.

\begin{theorem} \label{T:sumihiro}
Let $X$ be a quasi-separated algebraic space, locally of finite type over $k$, with an action of a torus $T$. If $x \in X$ is a $k$-point, then there exist a $T$-equivariant \'etale neighborhood $(\Spec A, u) \to (X,x)$.
\end{theorem}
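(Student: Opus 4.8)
The plan is to deduce the theorem directly from the generalized Luna slice theorem, Theorem~\ref{T:luna}, the key observation being that torus actions automatically have linearly reductive stabilizers.

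First I would note that the stabilizer $G_x$ of a $k$-point $x \in X$ is a closed subgroup scheme of the torus $T$, hence diagonalizable, hence linearly reductive over $k$. Since $T$ is a smooth affine group scheme, Theorem~\ref{T:luna} applies with $G = T$: it produces an affine scheme $W$ with an action of $G_x$ fixing a point $w$, a $G_x$-equivariant (unramified) morphism $(W,w) \to (X,x)$, and an \'etale morphism $\tilde f \co W \times^{G_x} T \to X$, where $W \times^{G_x} T = (W \times T)/G_x$ and $G_x$ acts on $W \times T$ diagonally (via the given action on $W$ and by translation on $T$).

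Next I would check that $W \times^{G_x} T$ is represented by an affine scheme. The diagonal $G_x$-action on the affine scheme $W \times T$ is free (it is already free on the $T$-factor) and proper (the orbits are, in the $T$-factor, translates of the closed subgroup $G_x \subset T$). Since $G_x$ is linearly reductive, the affine GIT quotient $(W \times T) \gitq G_x$ is therefore a geometric quotient and represents $W \times^{G_x} T$; write $\Spec A$ for this affine scheme. It inherits a $T$-action from right translation on the $T$-factor (well-defined as $T$ is abelian), and $\tilde f$ is $T$-equivariant for it by construction. Letting $u \in \Spec A$ be the image of $(w,e) \in W \times T$, we have $\tilde f(u) = x$, so $\tilde f \co (\Spec A, u) \to (X,x)$ is the required $T$-equivariant \'etale neighborhood.

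I expect the only real subtlety to be the affineness assertion just made --- that the associated-space construction $W \times^{G_x} T$ is genuinely an affine scheme and not merely an algebraic space --- and this is exactly where the affineness of $T$ (so that $W \times T$ is affine) and the linear reductivity of $G_x$ (so that the GIT quotient is a well-behaved geometric quotient) are used; everything else is formal. Alternatively, one could bypass Theorem~\ref{T:luna} and argue from Theorem~\ref{T:general} applied to $\cX = [X/T]$: pulling the $T$-torsor $X \to \cX$ back along an \'etale neighborhood $([\Spec A'/G_x],w) \to (\cX,x)$ gives a $T$-torsor over $[\Spec A'/G_x]$ with total space $[P/G_x]$, where $P$ is affine and, since $X$ is an algebraic space, the $G_x$-action on $P$ is free; the same GIT argument then exhibits $[P/G_x]$ as an affine scheme \'etale over $X$.
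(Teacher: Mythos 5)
The paper does not actually prove this theorem: the section of applications explicitly defers all proofs to \cite{ahr}, so there is no in-paper argument to compare against. Your derivation from Theorem~\ref{T:luna} is nonetheless correct and is the natural way to obtain the result from the tools developed here. The two observations you single out are exactly the right ones: subgroup schemes of a torus are diagonalizable and hence linearly reductive over $k$ in any characteristic, so Theorem~\ref{T:luna} applies with $G = T$; and the associated space $W \times^{G_x} T$ is genuinely affine. Your affineness argument is sound: $W \times T$ is affine, the diagonal $G_x$-action is free and has closed orbits (each orbit projects isomorphically onto a closed coset of $G_x$ in $T$, and since $W$ is separated this forces the orbit to be closed in $W \times T$), so the GIT quotient by the linearly reductive $G_x$ is a geometric quotient and agrees with the algebraic-space quotient $W \times^{G_x} T$, which is therefore $\Spec\bigl((\oh(W)\otimes \oh(T))^{G_x}\bigr)$.

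Two small remarks. The right $T$-translation action on $W \times^{G_x} T$ is well-defined for any associated-bundle construction, regardless of commutativity of $T$ (left translation by $G_x$ and right translation by $T$ always commute), so the parenthetical ``(well-defined as $T$ is abelian)'' is not needed; and the $T$-equivariance of $\tilde f$ is not stated explicitly in Theorem~\ref{T:luna} but does hold for the canonical induced map, which is worth recording when you invoke it. Your alternative route via Theorem~\ref{T:general} also works, but requires one extra step: first shrink the \'etale neighborhood so that $f$ is stabilizer-preserving and hence representable, which is what guarantees the pullback $[P/G_x]$ is an algebraic space; affineness of $[P/G_x]$ then follows because a quasi-separated algebraic space of the form $[P/G_x]$ with $P$ affine and $G_x$ linearly reductive is cohomologically affine, hence (by Serre's criterion) affine and equal to $\Spec \oh(P)^{G_x}$ --- slightly different in flavor from the closed-orbits argument used in the first route.
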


\begin{remark}
More generally, if $X$ is a Deligne--Mumford stack (with the other hypotheses remaining the same), one can show that there exist a reparameterization $\alpha \co T \to T$ and an \'etale neighborhood $(\Spec A, u) \to (X,x)$ that is equivariant with respect to $\alpha$.
\end{remark}

In the case that $X$ is a normal scheme, Theorem \ref{T:sumihiro} was proved by Sumihiro in \cite[Cor.~2]{sumihiro1} in which case $\Spec A \to X$ can be taken to be an open neighborhood.  In the example of the $\GG_m$-action on the nodal cubic, there does not exist a $\GG_m$-invariant affine open neighborhood of the node.  Thus, for non-normal schemes, it is necessary in general to allow \'etale neighborhoods.

In fact,  we can prove more generally:

\begin{theorem} \label{T:sumi3}
Let $X$ be a quasi-separated algebraic space, locally of finite type over $k$, with an action of an affine group scheme $G$ of finite type over $k$.  Let $x \in X$ be a $k$-point with linearly reductive stabilizer $G_x$.  Then there exist an affine scheme $W$ with an action of $G$ and a $G$-equivariant \'etale neighborhood $W \to X$ of~$x$.
\end{theorem}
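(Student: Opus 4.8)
The plan is to reduce to the structure theorem for stacks (Theorem~\ref{T:general}) applied to the quotient stack $[X/G]$, and then to ``untwist'' the resulting neighborhood using a torsor. Set $\cX=[X/G]$. Since $X$ is quasi-separated and locally of finite type over $k$ and $G$ is an affine group scheme of finite type over $k$, the stack $\cX$ is quasi-separated, locally of finite type over $k$, and has affine stabilizers; moreover the $k$-point $x\in X$ determines a $k$-point $\bar x\in\cX$ whose stabilizer is $G_x$, which is linearly reductive by hypothesis. Theorem~\ref{T:general} therefore provides an affine scheme $\Spec A$ with a $G_x$-action, a fixed $k$-point $w\in\Spec A$, and an \'etale morphism $f\co\cW:=[\Spec A/G_x]\to\cX$ inducing an isomorphism of stabilizer groups at $w$, with $w$ lying over $\bar x$.

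Next I would pull back the canonical $G$-torsor $\pi\co X\to\cX$ along $f$ and set $\tilde X:=X\times_{\cX}\cW$. Then $\tilde X\to\cW$ is a $G$-torsor, so $\tilde X$ carries a $G$-action; the other projection $g\co\tilde X\to X$ is the base change of $f$, hence \'etale, and it is $G$-equivariant for the torsor action on the $X$-coordinate. Since $w$ lies over $\bar x$, the fiber of $g$ over $x\in X$ --- which is $\cW\times_{\cX}\Spec k(x)$, an algebraic space \'etale over $\Spec k$ --- contains a $k$-point $\tilde w$. Thus $g\co(\tilde X,\tilde w)\to(X,x)$ is a $G$-equivariant \'etale neighborhood, and it remains only to check that $\tilde X$ is an affine scheme rather than merely an algebraic space.

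For this, consider $\tilde X':=\tilde X\times_{\cW}\Spec A=X\times_{\cX}\Spec A$, where $\Spec A\to\cW\xrightarrow{f}\cX$. On one hand $\tilde X'\to\Spec A$ is a $G$-torsor, hence an affine morphism, so $\tilde X'$ is affine. On the other hand $\tilde X'\to\tilde X$ is the base change of the $G_x$-torsor $\Spec A\to\cW$, hence itself a $G_x$-torsor; in particular $G_x$ acts freely on the affine scheme $\tilde X'$ with quotient $\tilde X$. Since $G_x$ is linearly reductive, the quotient of an affine scheme by a free $G_x$-action is again affine --- concretely $\tilde X\cong\Spec\bigl(\Gamma(\tilde X',\oh_{\tilde X'})^{G_x}\bigr)=\tilde X'\gitq G_x$, the good moduli space, which in the free case is a geometric quotient. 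Taking $W:=\tilde X$ finishes the proof.

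The crux --- and the place where the hypothesis that $G_x$ (and not $G$) is linearly reductive is used decisively --- is this last affineness assertion: Theorem~\ref{T:general} only produces an \'etale neighborhood of the form $[\Spec A/G_x]$, whose associated $G$-torsor $\tilde X$ is a priori just an algebraic space, and linear reductivity of $G_x$ is exactly what lets one recover affineness of $\tilde X$ after quotienting the affine $\tilde X'$ by the residual free $G_x$-action. (By contrast, the Luna-type statement of Theorem~\ref{T:luna} only yields an \'etale cover $W\times^{G_x}G\to X$ whose source is typically non-affine, so it does not by itself give the theorem.) Everything else is routine torsor bookkeeping: that torsors under affine group schemes are affine morphisms, that the torsor property is stable under base change, and descent of affineness along the good moduli space morphism.
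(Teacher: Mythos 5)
Your proof is correct, and this is the intended reduction. The paper itself does not include a proof of Theorem~\ref{T:sumi3} (it defers to \cite{ahr}), but the argument you give is precisely the natural deduction from Theorem~\ref{T:general}: pass to $[X/G]$, obtain the \'etale chart $[\Spec A/G_x]\to[X/G]$, pull back the $G$-torsor $X\to[X/G]$, and then recover affineness of the resulting $G$-equivariant \'etale cover $\tilde{X}$ from affineness of $\tilde{X}'=X\times_{[X/G]}\Spec A$ together with linear reductivity of $G_x$. For the final affineness step it is worth spelling out the cleanest justification: $[\tilde{X}'/G_x]\to\Spec\Gamma(\tilde{X}',\oh)^{G_x}$ is a good moduli space, and since the source is already an algebraic space (the $G_x$-action is free), the universal property of good moduli spaces among maps to algebraic spaces forces this morphism to be an isomorphism, so $\tilde{X}\cong\Spec\Gamma(\tilde{X}',\oh)^{G_x}$ is affine.
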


\subsubsection{Application 3}  We now translate the conclusion of Theorem \ref{T:general} in the case that $\cX$ is the stack of all (possibly singular) curves.  By a {\it curve}, we mean a proper scheme over $k$ of pure dimension one.
 
 \begin{theorem} \label{T:curves}
 Let $C$ be an $n$-pointed curve.  Suppose that every connected component of $C$ is either reduced of arithmetic genus $g \ne 1$ or contains a marked point. Suppose that $\Aut(C)$ is smooth and linearly reductive.  Then there exist an affine scheme $W$ of finite type over $k$ with an action of $\Aut(C)$ fixing a $k$-point $w \in W$
 and a miniversal deformation
 $$\xymatrix{
 \cC \ar[d]	 				&  C \ar[l] \ar[d]\\
 W 	\ar@{}[ur]|\square		&  \Spec k \ar[l]_{w}
 }$$
 of $C \cong \cC_w$ such that there exists an 
 action of $\Aut(C)$ on the total family $\cC$ compatible with the action of $\Aut(C)$ on $W$
 and $\cC_w$.
 \end{theorem}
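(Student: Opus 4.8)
The plan is to exhibit $C$ as a $k$-point of the algebraic stack of $n$-pointed curves and to apply Theorem~\ref{T:general}, after which everything reduces to unwinding the quotient-stack morphism into an equivariant family together with a short tangent-space computation. So first I would set up the moduli stack: let $\cX$ be the stack over $k$ whose objects over a $k$-scheme $T$ are flat proper morphisms $\cC\to T$ of pure relative dimension one equipped with $n$ disjoint sections, so that the $n$-pointed curve $C$ determines a $k$-point $x\in\cX$ with $G_x=\Aut(C)$. This $\cX$ is an algebraic stack, locally of finite type and quasi-separated over $k$ (e.g.\ by realizing it locally via Hilbert schemes of projective embeddings, or by Artin's criterion). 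The numerical hypothesis on $C$---every connected component reduced of arithmetic genus $\ne 1$ or carrying a marked point---is precisely what forces the automorphism group scheme $\Aut(C)$ to be affine, by ruling out translations on a smooth genus-one component; moreover this condition defines an open substack of $\cX$, so after shrinking $\cX$ we may assume every stabilizer of $\cX$ is affine. Since $\Aut(C)$ is assumed smooth and linearly reductive, $G_x$ is linearly reductive and the hypotheses of Theorem~\ref{T:general} hold for $(\cX,x)$.

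Applying Theorem~\ref{T:general} then produces an affine scheme $W=\Spec A$ of finite type over $k$ with an action of $\Aut(C)$, a fixed $k$-point $w\in W$, and an \'etale morphism $f\colon([W/\Aut(C)],w)\to(\cX,x)$ inducing an isomorphism of stabilizer groups at $w$. Pulling back along the $\Aut(C)$-torsor $W\to[W/\Aut(C)]$ yields a morphism $W\to\cX$, i.e.\ an $n$-pointed curve $\cC\to W$; since this morphism factors through the quotient $[W/\Aut(C)]$ it is canonically $\Aut(C)$-invariant, which is exactly the datum of a lift of the $\Aut(C)$-action on $W$ to an action on the total family $\cC$. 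Restricting over the $\Aut(C)$-fixed point $w$, and using that $f$ identifies the stabilizer of $w$ with $G_x=\Aut(C)$ compatibly, the fibre $\cC_w$ is identified $\Aut(C)$-equivariantly with $C$ carrying its tautological automorphism action; in particular $w\mapsto x$ and $\cC_w\cong C$ as $n$-pointed curves.

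It remains to check that $\cC\to W$ is a \emph{miniversal} deformation of $C$ at $w$. As $W\to[W/\Aut(C)]$ is a torsor (hence smooth) and $f$ is \'etale, the composite $W\to\cX$ is smooth at $w$; being of finite type, this is equivalent to saying that $\cC\to W$ is a versal deformation of $C$ at $w$. For minimality I would compute the Kodaira--Spencer map. Since $w$ is fixed by $\Aut(C)$, a first-order deformation of the object $w$ in $[W/\Aut(C)]$ is nothing but a tangent vector to $W$ at $w$ (the usual correction term, the image of the Lie algebra of $\Aut(C)$ under the action map, vanishes at a fixed point), so $T_wW\iso T_{[W/\Aut(C)],w}$; composing with the isomorphism on tangent spaces induced by the \'etale morphism $f$ gives $T_wW\iso T_{\cX,x}$, the space of first-order deformations of the $n$-pointed curve $C$. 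A versal deformation with bijective Kodaira--Spencer map is miniversal, which completes the construction.

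The crux is the first step: identifying the correct moduli stack and verifying that it has affine stabilizers in a neighbourhood of $[C]$, since this is exactly what allows Theorem~\ref{T:general} to be invoked in the first place. It is here that the hypothesis on the components of $C$ does its work, guaranteeing that $C$ and its small deformations have affine automorphism group schemes (ruling out an abelian-variety contribution from a genus-one component). Once one is past this input, the remaining steps---translating the morphism $[\Spec A/\Aut(C)]\to\cX$ into an $\Aut(C)$-equivariant family of curves and carrying out the tangent-space computation that pins down miniversality---are routine.
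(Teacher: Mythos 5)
Your proposal is correct and follows exactly the approach the paper indicates for this result: it applies Theorem~\ref{T:general} to the moduli stack of $n$-pointed curves with affine automorphism groups (the hypothesis on components being what guarantees affineness of stabilizers near $[C]$), pulls back the universal family along the $\Aut(C)$-torsor $\Spec A \to [\Spec A/\Aut(C)]$ to get the equivariant family, and checks miniversality via the tangent-space identification $T_wW \cong T_{[W/\Aut(C)],w} \cong T_{\cX,x}$ (the first isomorphism holding because $w$ is a fixed point with smooth stabilizer, so the orbit tangent space vanishes). The paper gives no proof of this application, merely noting it ``translates the conclusion of Theorem~\ref{T:general}'' and referring to \cite{ahr}, but your unwinding is the intended one.
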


\subsubsection{Application 4}  In our final application, we will conclude that under suitable hypotheses the coherent completion of a point of an algebraic stack exists and moreover we will give equivalent conditions for algebraic stacks to be \'etale locally isomorphic.

Let $\cX$ be a noetherian algebraic stack  over $k$ with affine stabilizers and let $x \in \cX$ be a closed $k$-point.  We say that $(\cX, x)$ is a {\it complete local stack} if 
the natural functor
$$\Coh(\cX)  \to  \ilim_n \Coh\bigl(\cX_{n}\bigr)$$
is an equivalence.  

\begin{theorem} \label{T:completions}
Let $\cX$ be a quasi-separated algebraic stack, locally of finite type over $k$, with affine stabilizers. For any $k$-point $x \in \cX$ with linearly reductive stabilizer $G_x$, there exist a complete local stack $(\hat{\cX}_x,\hat{x})$ and a morphism $\eta\colon (\hat{\cX}_x,\hat{x}) \to (\cX,x)$ inducing isomorphisms of $n$th infinitesimal neighborhoods of $\hat{x}$ and $x$. The pair $(\hat{\cX}_x,\eta)$ is unique up to unique $2$-isomorphism.
\end{theorem}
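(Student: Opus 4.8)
The plan is to construct $\hat{\cX}_x$ as the completion of an \'etale slice through $x$, to check that it is a complete local stack by invoking coherent completeness, and to deduce uniqueness from the Tannakian formalism. First I would apply Theorem~\ref{T:general} to obtain an \'etale morphism $f\co(\cW,w)=\bigl([\Spec A/G_x],w\bigr)\to(\cX,x)$ inducing an isomorphism of stabilizers at $w$, where $w$ is a $k$-point fixed by $G_x$. Since $G_x$ is linearly reductive, $A^{G_x}$ is a finitely generated $k$-algebra; write $W=\Spec A^{G_x}$ and let $w_0\in W$ be the image of $w$. I would then set
\[
\hat{\cX}_x:=\cW\times_W\Spec\hat{\oh}_{W,w_0}=\bigl[\Spec\hat{A}/G_x\bigr],\qquad \hat{A}:=A\otimes_{A^{G_x}}\hat{\oh}_{W,w_0},
\]
take $\hat{x}$ to be the closed point lying over $w$, and let $\eta\co\hat{\cX}_x\to\cW\xrightarrow{f}\cX$ be the composite. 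Because $\hat{\oh}_{W,w_0}$ is flat over $A^{G_x}$ and the functor of $G_x$-invariants is exact, invariants commute with this base change, so $\hat{A}^{G_x}=\hat{\oh}_{W,w_0}$ is a complete local noetherian $k$-algebra; in particular $\hat{x}$ is the unique closed point of $\hat{\cX}_x$, namely the unique closed $G_x$-orbit over the closed point of $\Spec\hat{A}^{G_x}$, which is closed because $w$ is $G_x$-fixed.

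Next I would record the two asserted properties of $(\hat{\cX}_x,\eta)$. Since $\bigl[\Spec\hat{A}/G_x\bigr]$ meets the hypotheses of Black Box~\ref{BB:complete}, the functor $\Coh(\hat{\cX}_x)\to\ilim_n\Coh\bigl((\hat{\cX}_x)_n\bigr)$ is an equivalence, so $(\hat{\cX}_x,\hat{x})$ is a complete local stack. For the infinitesimal statement, $f$ being \'etale and an isomorphism on stabilizers at $w$ implies, after shrinking $\cW$ around $w$ (which does not change $\hat{\cX}_x$), that the preimage of $BG_x$ is exactly $BG_w$; by uniqueness of \'etale lifts along nilpotent thickenings this gives isomorphisms $\cW_n\iso\cX_n$. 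As each thickening $(\hat{\cX}_x)_n$ is supported over $w_0$, it is unaffected by the flat base change along $\Spec\hat{\oh}_{W,w_0}\to W$, whence $(\hat{\cX}_x)_n=\cW_n\cong\cX_n$ compatibly in $n$, which is the required isomorphism of $n$th infinitesimal neighborhoods.

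For uniqueness, let $(\cY,y)$ be a complete local stack equipped with $\epsilon\co(\cY,y)\to(\cX,x)$ inducing isomorphisms $\cY_n\iso\cX_n$. Then $G_y\cong G_x$ is linearly reductive, and both $\cY$ and $\hat{\cX}_x$ are noetherian algebraic stacks with affine stabilizers; I would also verify that they are excellent (one can show that a complete local stack whose stabilizer at its point is linearly reductive is of the form $[\Spec C/G_y]$ with $C^{G_y}$ complete local noetherian, hence excellent). Using coherent completeness on both sides, the compatible identifications $\cY_n\cong\cX_n\cong(\hat{\cX}_x)_n$ transport to a right-exact monoidal equivalence $\Phi\co\Coh(\hat{\cX}_x)\iso\Coh(\cY)$ compatible with the pullback functors from $\Coh(\cX)$. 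By the Tannakian formalism (Black Box~\ref{BB:tannakian}), $\Phi$ and a monoidal quasi-inverse are induced by morphisms $h\co\hat{\cX}_x\to\cY$ and $g\co\cY\to\hat{\cX}_x$ over $\cX$; the full faithfulness part of Black Box~\ref{BB:tannakian} then forces $g\circ h\simeq\id$ and $h\circ g\simeq\id$, and shows that any equivalence $\cY\simeq\hat{\cX}_x$ over $\cX$ is uniquely $2$-isomorphic to $g$. This yields uniqueness of $(\hat{\cX}_x,\eta)$ up to unique $2$-isomorphism.

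The main obstacle will be the uniqueness: the construction is a fairly mechanical combination of Theorem~\ref{T:general} and Black Box~\ref{BB:complete}, the only subtle points being that $G_x$-invariants commute with completion of the GIT quotient and the matching of infinitesimal neighborhoods. It is in the uniqueness that the real input appears—coherent completeness is precisely what rigidifies the stack, while Black Box~\ref{BB:tannakian} is exactly what upgrades the resulting equivalence of coherent tensor categories to an honest $2$-isomorphism of stacks. The most delicate bookkeeping in that step is confirming that every stack in play is excellent, so that the Tannakian formalism applies, and that the $2$-isomorphisms can be arranged over $\cX$ and are unique.
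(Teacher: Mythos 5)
Your proposal is correct and follows the same route the paper indicates: construct $\hat{\cX}_x$ as the fiber product of the étale quotient presentation $\cW=[\Spec A/G_x]\to\cX$ from Theorem~\ref{T:general} against the completion of its GIT quotient, verify it is a complete local stack via Black Box~\ref{BB:complete}, and deduce uniqueness from the Tannakian formalism (Black Box~\ref{BB:tannakian}). The paper itself only records this construction and defers the details to \cite{ahr}, so your filling-in — flatness of completion plus exactness of $G_x$-invariants to identify $\hat{A}^{G_x}=\hat{\oh}_{W,w_0}$, the matching of infinitesimal neighborhoods via étaleness of $f$ and the fact that they are supported over the closed point, and the excellence check needed to run Tannaka — is exactly the right content.
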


If $(\cW=[\Spec A/G_x], w)\to (\cX,x)$ is an \'etale morphism as in Theorem~\ref{T:general}
and $\phi\colon \cW\to W= \Spec A^{G_x}$ is the morphism to the GIT quotient, then the completion $\hat{\cX}_x$ is constructed as the fiber product
$$\xymatrix{
\hat{\cX}_x \ar[r] \ar[d]	& \cW \ar[d]^{\phi} \\
\Spec \hat{\oh}_{W, \phi(w)} \ar[r] \ar@{}[ur]|\square	& W . 
}$$

\begin{example}
The pair $([\AA^n / \GG_m], 0)$ is a complete local stack where $\GG_m$ acts with weight $1$ on all coordinates.  
\end{example}

\begin{example}
If we let $\cX = [\AA^2 / \GG_m]$ where $\GG_m$ acts with weights $(1,-1)$ on the coordinates $\AA^2 = \Spec k[x,y]$ and $x \in \cX$ denotes the origin, then $(\cX,x)$ is not a complete local stack.  The completion of $\cX$ at $x$ is given by the fiber product
$$\xymatrix{
\hat{\cX}_x  \ar[r] \ar[d]		& [\AA^2 / \GG_m] \ar[d] \\
\Spec k[[xy]] \ar[r]\ar@{}[ur]|\square			& \Spec k[xy]
}$$
which is identified with the quotient stack $[\Spec(k[[xy]] \otimes_{k[xy]} k[x,y] ) / \GG_m]$.
\end{example}

The next result is a stacky generalization of Corollary \ref{C:etale-local}, which was our first application of Artin approximation. 

\begin{theorem} \label{T:etale}
  Let $\cX$ and $\cY$ be quasi-separated algebraic stacks, locally of finite type over $k$, with affine stabilizers.  Suppose $x \in \cX$ and $y \in \cY$ are $k$-points with linearly reductive stabilizer group schemes $G_x$ and $G_y$, respectively.  Then the following are equivalent:
  \begin{enumerate}
  	\item
      There exist compatible isomorphisms $\cX_n \to \cY_n$.
  	\item
	  There exists an isomorphism $\hat{\cX}_x \to \hat{\cY}_y$.
  	\item
	  There exist an affine scheme $\Spec A$ with an action of $G_x$, a point $w \in \Spec A$ fixed by $G_x$, and a diagram of
      \'etale morphisms
	$$\xymatrix{	
			& [\Spec A /G_x] \ar[ld]_f \ar[rd]^g \\
		\cX	&  & \cY
	}$$
	such that $f(w) = x$ and $g(w) = y$, and both $f$ and $g$ induce isomorphisms of stabilizer groups at $w$.
  \end{enumerate}
If, in addition, the points $x \in \cX$ and  $y \in \cY$ are smooth and if the stabilizers $G_x$ and $G_y$ are smooth, then the conditions above are equivalent to the existence of an isomorphism $G_x \to G_y$ of group schemes and an isomorphism $T_{\cX,x} \to T_{\cY,y}$ of tangent spaces which is equivariant under $G_x \to G_y$.
\end{theorem}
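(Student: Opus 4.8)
\emph{Plan of proof.} We may assume $x\in\cX$ and $y\in\cY$ are closed points, and we establish $(3)\Rightarrow(1)\Rightarrow(2)\Rightarrow(3)$ before turning to the addendum. For $(3)\Rightarrow(1)$: if $g\co[\Spec A/G_x]\to\cX$ is \'etale at $w$ and induces an isomorphism of stabilizers at $w$, then near $w$ it restricts to \'etale morphisms $[\Spec A/G_x]_n\to\cX_n$ of $n$th infinitesimal neighborhoods, and each of these is \'etale, surjective (source and target have a single point), and radicial (the induced map of residual gerbes $BG_w\to BG_x$ is an isomorphism), hence an isomorphism; running this for both morphisms $f$ and $g$ in $(3)$ yields compatible isomorphisms $\cX_n\cong[\Spec A/G_x]_n\cong\cY_n$. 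For $(1)\Rightarrow(2)$: by Theorem~\ref{T:completions} the coherent completions $\hat{\cX}_x$ and $\hat{\cY}_y$ exist, are complete local stacks, and have $(\hat{\cX}_x)_n\cong\cX_n$, $(\hat{\cY}_y)_n\cong\cY_n$; hence $\Coh(\hat{\cX}_x)\simeq\ilim_n\Coh(\cX_n)$ and $\Coh(\hat{\cY}_y)\simeq\ilim_n\Coh(\cY_n)$, so a compatible family of isomorphisms $\cX_n\cong\cY_n$ produces a symmetric monoidal (in particular right exact) equivalence $\Coh(\hat{\cX}_x)\simeq\Coh(\hat{\cY}_y)$. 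Since $\hat{\cX}_x$ and $\hat{\cY}_y$ are excellent algebraic stacks with affine stabilizers, the Tannakian formalism (Black Box~\ref{BB:tannakian}) converts this equivalence and its inverse into morphisms $\hat{\cX}_x\to\hat{\cY}_y$ and $\hat{\cY}_y\to\hat{\cX}_x$ whose composites induce the identity functors on coherent sheaves, hence (by fully faithfulness) are the identities; this gives the isomorphism in $(2)$.

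\emph{$(2)\Rightarrow(3)$.} First apply Theorem~\ref{T:general} to $(\cX,x)$ to obtain an \'etale morphism $f\co(\cW,w)\to(\cX,x)$ with $\cW=[\Spec A/G_x]$ inducing an isomorphism of stabilizers at $w$. Put $W=\Spec A^{G_x}$ and let $p\in W$ be the image of $w$; as explained after Theorem~\ref{T:completions}, with this chart one has $\hat{\cX}_x=\hat{\cW}_w:=\Spec\hat{\oh}_{W,p}\times_W\cW$. Composing the given isomorphism $\psi\co\hat{\cX}_x\iso\hat{\cY}_y$ with $\eta_\cY\co\hat{\cY}_y\to\cY$ yields a morphism $h\co\hat{\cW}_w\to\cY$ which, because $\eta_\cY$ induces isomorphisms on all infinitesimal neighborhoods and $\psi$ is an isomorphism, induces isomorphisms $(\hat{\cW}_w)_n\iso\cY_n$ and an isomorphism of stabilizers at $w$. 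The functor $F\co\Sch/W\to\Sets$, $(T\to W)\mapsto\{T\times_W\cW\to\cY\}/{\simeq}$, is limit preserving and $h\in F(\Spec\hat{\oh}_{W,p})$, so Artin approximation (Theorem~\ref{T:approx-groupoid}) with $N$ suitably large (see below) produces an \'etale $(U,u)\to(W,p)$ with $U$ affine and a morphism $\xi'\co\cW':=U\times_W\cW\to\cY$ agreeing with $h$ to order $N$. Here $\cW'=[\Spec(A\otimes_{A^{G_x}}\oh_U)/G_x]$ is a quotient of a finite type affine scheme by $G_x$; since $\cW'\to\cW$ is representable \'etale and carries the residual gerbe at the point $w'$ over $(u,w)$ isomorphically onto $BG_w$, the composite $\cW'\to\cX$ is \'etale at $w'$ and induces an isomorphism of stabilizers there. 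For the morphism $\xi'\co\cW'\to\cY$: writing $\cI_{\cW'}$ for the ideal of $BG_{w'}$ in $\cW'$ one has $\cI_{\cW'}^{\,n+1}\supseteq\fm_{W,p}^{\,n+1}\oh_{\cW'}$, so $\cW'_n$ is contained in the $n$th order thickening of $\cW'$ along $W$; consequently $\xi'$ and $h$ agree on $\cW'_n$ for all $n\le N$, and hence $\xi'$ induces isomorphisms $\cW'_n\iso\cY_n$ for $n\le N$. Note also that the completion of $\cW'$ at $w'$ is again $\hat{\cW}_w$ (\'etale base change does not affect it), which by construction is $\hat{\cX}_x\cong\hat{\cY}_y$.

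\emph{The main obstacle.} It remains to upgrade ``$\xi'$ induces isomorphisms $\cW'_n\iso\cY_n$ for $n\le N$'' to ``for all $n$'', which will give \'etaleness of $\xi'$ at $w'$ (by the stacky analogue of the criterion recalled before Corollary~\ref{C:etale-local}) together with the isomorphism of stabilizers. Because $\cW'_1\iso\cY_1$, the induced map on completions is surjective, so $\xi'$ restricts on formal neighborhoods to a closed immersion $\hat{\cW}_w\hookrightarrow\hat{\cY}_y$ whose ideal $\cH$ satisfies $\cH\subseteq\cI_y^{\,N+1}$ (where $\cI_y$ defines $BG_y$), since $\cW'_n=\cY_n$ for $n\le N$. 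If $N$ exceeds the Artin--Rees constant of $\cH$, then $\cH=\cH\cap\cI_y^{\,N+1}\subseteq\cI_y\cdot\cH$, whence $\cH=0$ by Nakayama and $\xi'$ is an isomorphism on all formal neighborhoods, hence \'etale at $w'$; shrinking $U$ then makes $\cW'$ a common \'etale neighborhood, completing $(3)$. The subtle point is that $N$ must be fixed \emph{before} applying Artin approximation, so the relevant Artin--Rees constant has to be bounded a priori in terms of the fixed stack $\hat{\cY}_y$ (equivalently $\hat{\cX}_x$) rather than in terms of the as-yet-unknown $\cW'$. This is exactly the uniform control built into the Conrad--de Jong refinement of Artin approximation (condition~(5) of Theorem~\ref{T:conrad-dejong} and the $(\AR)_c$ condition of Definition~\ref{D:AR}), and in the equivariant setting over the GIT quotient $W$ it is packaged in equivariant Artin algebraization (Theorem~\ref{T:equivariant-algebraization}); alternatively, one may feed the $\cY$-valued object into equivariant Artin algebraization alongside the formally versal morphism $\hat{\cW}_w\to\cX$, so that the output chart is forced to be compatible with both $\cX$ and $\cY$ simultaneously. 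This bookkeeping, done $G_x$-equivariantly, is where the real work lies.

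\emph{The addendum.} Suppose in addition that $x$ and $y$ are smooth points with smooth (linearly reductive) stabilizers. If the equivalent conditions hold then $\cX_n\cong\cY_n$ compatibly; taking $n=0$ gives an isomorphism $BG_x\cong BG_y$ carrying the canonical $k$-points to one another, hence $G_x\cong G_y$, and taking $n=1$ gives an isomorphism of conormal representations $\cI_x/\cI_x^2\cong\cI_y/\cI_y^2$ on these gerbes, which — via the deformation-theoretic identification $\cX_1\cong[T_{\cX,x}/G_x]_1$ valid at a smooth point (proof of Theorem~\ref{T:smooth}) — is precisely a $G_x$-equivariant isomorphism $T_{\cX,x}\cong T_{\cY,y}$. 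Conversely, an isomorphism $G_x\cong G_y$ together with a compatible isomorphism $T_{\cX,x}\cong T_{\cY,y}$ induces an isomorphism $[T_{\cX,x}/G_x]\cong[T_{\cY,y}/G_y]$ of quotient stacks, hence an isomorphism of their completions at the origins; but by the proof of Theorem~\ref{T:smooth}, at a smooth point $\hat{\cX}_x\cong\Spec\hat{\oh}_{T_{\cX,x}\gitq G_x,0}\times_{T_{\cX,x}\gitq G_x}[T_{\cX,x}/G_x]$, which is the completion of $[T_{\cX,x}/G_x]$ at the origin, and similarly for $\cY$. Therefore $\hat{\cX}_x\cong\hat{\cY}_y$, which is condition~$(2)$, and the proof is complete.
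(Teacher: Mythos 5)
The paper does not give a proof of Theorem~\ref{T:etale}; the applications section refers the reader to \cite{ahr}. Your outline is largely sound, but the direction $(2)\Rightarrow(3)$ contains an acknowledged gap (``This bookkeeping, done $G_x$-equivariantly, is where the real work lies''), and you have also manufactured an obstacle that does not exist.

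Here is the point. After applying Artin approximation with $N=1$ you already have everything you need; there is no need to control the Artin--Rees constant of $\cH$ a priori. The induced map on coherent completions $\xi'_\wedge\colon\hat{\cW'}_{w'}\to\hat{\cY}_y$ is a closed immersion (this follows from the first-order agreement and Lemma~\ref{L:surjective}). But the source $\hat{\cW'}_{w'}$ is abstractly isomorphic to $\hat{\cY}_y$: indeed $\cW'\to\cW\to\cX$ is \'etale at $w'$, so $\hat{\cW'}_{w'}\cong\hat{\cX}_x\cong\hat{\cY}_y$. Now the argument is exactly the one used in Footnote~\ref{F:local} and in the remark following the proof of Conrad--de~Jong approximation: a closed immersion between abstractly isomorphic noetherian complete local stacks is an isomorphism. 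Concretely, denote by $\cI_y$ and $\cI_{w'}$ the ideal sheaves of the residual gerbes. The abstract isomorphism identifies the residual gerbes and therefore the graded pieces $\cI_y^n/\cI_y^{n+1}$ and $\cI_{w'}^n/\cI_{w'}^{n+1}$ as finite-dimensional $G_y$-representations (coherent sheaves on $BG_y$). The closed immersion induces surjections between these, which are then isomorphisms by dimension count. From $\cH\subset\cI_y^2$ and the identities $\cI_y^n\cap(\cI_y^{n+1}+\cH)=\cI_y^{n+1}$ one inductively deduces $\cH\subset\bigcap_n\cI_y^n=0$ (Krull intersection in the complete local stack). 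Hence $\xi'$ is \'etale at $w'$ and induces an isomorphism of stabilizers, and $(3)$ follows after shrinking $U$. This is precisely the phenomenon recorded in Remark~\ref{R:proof-algebraization-special-case}: once the completion is known abstractly, algebraization collapses to a first-order approximation, and one does not need the Conrad--de~Jong uniform Artin--Rees control nor a fresh run of equivariant Artin algebraization. Your alternative suggestion of feeding a $\cY$-valued object into Theorem~\ref{T:equivariant-algebraization} alongside the formally versal map to $\cX$ would work but is heavier than necessary, and as you present it the argument is not complete. The rest of the proof --- $(3)\Rightarrow(1)$ via ``\'etale + surjective + radicial on infinitesimal thickenings,'' $(1)\Rightarrow(2)$ via coherent completeness and Tannaka duality, and the addendum via $n=0$ and $n=1$ infinitesimal data together with the identification $\hat{\cX}_x\cong$ completion of $[T_{\cX,x}/G_x]$ from the proof of Theorem~\ref{T:smooth} --- is correct as written.
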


\bibliography{refs}
\bibliographystyle{dary}

\end{document}